\providecommand{\tabularnewline}{\\}
\theoremstyle{plain}
\newtheorem{thm}{\protect\theoremname}
\theoremstyle{remark}
\newtheorem{rem}{\protect\remarkname}
\providecommand{\remarkname}{Remark}
\providecommand{\theoremname}{Theorem}
\begin{document}
\title{Nonlinear Model Reduction to \\
Temporally Aperiodic Spectral Submanifolds}
\author{George Haller\thanks{Corresponding author. Email: georgehaller@ethz.ch}
~~and Roshan S. Kaundinya \\
Institute for Mechanical Systems\\
ETH Zürich\\
Leonhardstrasse 76, 8092 Zürich, Switzerland\\
\\
(To appear in \textit{\textbf{Chaos}})}
\date{April 3, 2024}

\maketitle
We extend the theory of spectral submanifolds (SSMs) to general non-autonomous
dynamical systems that are either weakly forced or slowly varying.
Examples of such systems arise in structural dynamics, fluid-structure
interactions and control problems. The time-dependent SSMs we construct
under these assumptions are normally hyperbolic and hence will persist
for larger forcing and faster time dependence that are beyond the
reach of our precise existence theory. For this reason, we also derive
formal asymptotic expansions that, under explicitly verifiable nonresonance
conditions, approximate SSMs and their aperiodic anchor trajectories
accurately for stronger, faster or even temporally discontinuous forcing.
Reducing the dynamical system to these persisting SSMs provides a
mathematically justified model reduction technique for non-autonomous
physical systems whose time dependance is moderate either in magnitude
or speed. We illustrate the existence, persistence and computation
of temporally aperiodic SSMs in mechanical examples under chaotic
forcing.\\
\\

\textbf{Reduced models for complex physical systems are of growing
interest in various areas of applied science and engineering. Mathematically
justifiable reduction approaches, such as spectral submanifold (or
SSM) reduction, seek to identify the internal dynamics on lower-dimensional,
attracting invariant sets in the phase space of the system. These
reduced dynamics then become viable reduced models for general trajectories
that approach the invariant set and synchronize with its inner motions.
SSM reduction also allows for periodic or quasiperiodic time dependence
in the full system, but has been inapplicable to systems with more
general time dependence, such as impulsive, chaotic or discontinuous
forcing. This has hindered applications of SSM reduction to a number
of problems in structural dynamics. Here we remove this limitation
by extending SSM theory of temporally aperiodic dynamical systems.
We obtain exact results for cases of smooth small or smooth slow forcing,
but find that our formulas for SSM-reduced dynamics extend to larger
and faster forcing in physical examples, including even discontinuous
chaotic forcing.}

\section{Introduction}

In its simplest form, a spectral submanifold (SSM) of an autonomous
dynamical system is an invariant manifold $W(E)$ that is tangent
to a spectral subspace $E$ of the linearized system at a fixed point
(\citet{haller16}). Classic examples of spectral submanifolds are
the stable, unstable and center manifolds tangent to spectral subspaces
in which the linearized spectrum has eigenvalues with purely negative,
positive and zero real parts, respectively. The stable and unstable
manifolds are well known to be unique and as smooth as the dynamical
system, while center manifolds are non-unique and not all of them
are guaranteed to be as smooth as the dynamical system (\citet{guckenheimer83,hirsch77}).
Near fixed points that only have eigenvalues with negative and zero
real parts, center manifolds attract all trajectories and hence the
dynamics restricted to it provides a mathematically exact reduced-order
model for the full system (\citet{carr82}, \citet{roberts15}).

Dissipative physical systems, however, generically have hyperbolic
equilibria and hence admit no center manifolds. Instead, near their
stable equilibria, such systems tend to have a set of fastest-decaying
modes that die out quickly, leaving a set of slower decaying modes
to govern the longer-term dynamics. Slow manifolds (i.e., SSMs constructed
over such slower decaying modes) then replace center manifolds as
targets for mathematically justified model reduction. Such slow SSMs
were first targeted via Taylor expansions as nonlinear normal modes
(NNMs) by \citet{shaw93}. These insightful calculations were then
extended by the same authors to periodically and quasi-periodically
forced mechanical systems to approximate forced mechanical response
in a number of settings (see, e.g., the reviews by \citet{kerschen09,mikhlin11,touze21,mikhlin23}).

Later mathematical analysis of general SSMs yielded precise existence,
uniqueness and smoothness results for these manifolds. Specifically,
if the spectral subspace $E$ comprises either only decaying modes
or only growing modes (i.e., $E$ is a like-mode spectral subspace)
with no integer resonance relationships to the modes outside of $E$,
then the slow SSM family $W(E)$ has a unique, primary member, $W^{\infty}(E)$,
that is as smooth as the full dynamical system (\citet{cabre03},
\citet{haller16}). The remaining secondary (or fractional) members
of the SSM family have reduced but precisely known order of differentiability
that depends on the ratio of linearized decay rates outside $E$ to
those inside $E$ (\citet{haller23}). If we further disallow any
integer resonance in the full linearized spectrum, then primary and
fractional SSMs also exist when $E$ is of mixed-mode type, i.e.,
spanned by a combination of stable and unstable linear modes (\citet{haller23}). 

All these results also hold for discrete autonomous dynamical systems,
and hence SSM results also cover time-periodic continuous dynamical
systems when applied to their Poincaré maps. Based on this fact, numerical
implementations of time-periodic SSM calculations for periodically
forced finite-element structures have appeared in \citet{ponsioen2018,ponsioen2019,ponsioen2020,jain2022,vizzaccaro23}.
An open source, equation-driven MATLAB toolbox (SSMTool) with a growing
collection of worked problems is also available for mechanical systems
with general nonlinearities (\citet{jain23}). Data-driven construction
of time-periodic SSMs have been developed and applied to numerical
and experimental data by \citet{cenedese22a,cenedese22b,kaszas22,axas22},
with open-source MATLAB implementations (SSMLearn and fastSSM) available
from \citet{cenedese21}.

Outside autonomous and time-periodic non-autonomous systems, the existence
of like-mode SSMs has only been treated under small, time-quasiperiodic
perturbations of autonomous dynamical systems (\citet{haro06,haller16,opreni23,thurner23}).
In such systems, the role of hyperbolic fixed points as anchor points
for SSMs is taken over by invariant tori whose dimension is equal
to the number of rationally independent frequencies present in the
forcing. An SSM in such a case perturbs from the direct product of
an unperturbed invariant torus with an underlying spectral subspace
$E$. Such SSMs have been shown to exist for like-mode spectral subspaces
$E$ under small quasi-periodic perturbations, provided that the real
part of the linearized spectrum within $E$ has no integer relationships
with the real part of the spectrum outside $E$ (\citet{haro06,haro16,haller16}). 

Related work by \citet{fontich06} covers the persistence and smoothness
of primary SSMs emanating from an arbitrary attracting\emph{ }orbit
of an autonomous dynamical system. While all non-autonomous systems
become autonomous when their phase space is extended with the time
variable, this theory does not apply under such an extension. The
reason is that all attracting orbits lose hyperbolicity in the extended
phase space due to the presence of the neutrally stable time direction. 

In summary, while available SSM results have proven highly effective
in equation-driven and data-driven reduced-order modeling of autonomous,
time-periodic and time-quasiperiodic systems, they offer no theoretical
basis or computational scheme for physical systems with general aperiodic
time-dependence. Yet the aperiodic setting is clearly of importance
in a number of problems, including turbulent fluid-structure interactions,
civil engineering structures subject to benchmark aperiodic forcing
(such as those mimicking earthquakes) and control of robot motion.

In this paper, we extend available SSMs results to systems with general
time dependence. While several powerful linearization results imply
the existence of invariant manifolds for such non-autonomous dynamical
systems, these results guarantee either no smoothness for SSM-type
invariant manifolds (see, e.g., \citet{palmer73}) or rely on conditions
involving the Lyapunov spectra, Sacker\textendash Sell spectra or
dichotomy spectra of some associated non-autonomous linear systems
of ODEs (see, e.g., \citet{yomdin88}, \citet{poetzsche10} and \citet{cuong19}).
The latter types of conditions are intuitively clear but not readily
verifiable, especially not in a data-driven setting. Here our objective
is to conclude the existence of time-aperiodic SSMs under directly
computable conditions that also lead to explicitly computable SSM-reduced
models in equation-driven and data-driven applications.

To this end, we consider two settings that arise frequently in practice:
weak and additive non-autonomous time dependence and slowly varying
(or adiabatic) time dependence. The first setting of weak non-autonomous
external forcing is common in structural vibrations, wherein a structure's
steady-state response is of interest under various moderate loading
scenarios. So far, related studies have been restricted to temporally
periodic or quasiperiodic forcing (see, e.g., \citet{ponsioen2018,ponsioen2019,ponsioen2020,li22a,li22b,jain2022,vizzaccaro22,opreni23}),
because the existence and exact form of a steady state and its associated
SSM have been unknown for more general forcing profiles. The second
setting of slow time dependence arises, for instance, in controls
applications wherein the intended motion of a structure is generally
much slower than the characteristic time scales of its internal vibrations.
In those applications, the lack of an adiabatic SSM theory has so
far confined model-reduction studies to small-amplitude trajectories
along which a single, autonomous SSM computed at a nearby fixed point
was used for modeling purposes (\citet{alora23,alora23b}).

In both of these non-autonomous settings, we use, modify or extend
prior invariant manifold results and techniques to conclude the existence
of weakly aperiodic or adiabatic SSMs in the limit of small enough
or slow enough time dependence, respectively. We then derive explicit
recursive formulas for the arising non-autonomous SSMs and the aperiodic
anchor trajectories to which they are attached. These formulas also
cover and extend temporally periodic and aperiodic SSM computations
to arbitrarily high order of accuracy. Using simple mechanical examples
subjected to chaotic excitation, we illustrate that the new asymptotic
formulas yield accurate reduced-order models even for larger and faster
forcing. 

\section{Non-autonomous SSMs under weak forcing\label{sec:Non-Autonomous-SSM}}

\subsection{Set-up \label{sec:set-up}}

Consider a non-autonomous dynamical system of the form
\begin{equation}
\dot{x}=Ax+f_{0}(x)+f_{1}(x,t),\quad x\in\mathbb{R}^{n},\quad A\in\mathbb{R}^{n\times n},\quad f_{0}\in C^{r}(U),\quad f_{0}(x)=o\left(\left|x\right|\right),\label{eq: nonlinear system}
\end{equation}
for some integer $r\geq0$ and with
\begin{equation}
\left\Vert f_{1}\right\Vert _{U}=\sup_{(x,t)\in U\times\mathbb{R}}\left|f_{1}(x,t)\right|<\infty\label{eq:f_1 uniformly bounded}
\end{equation}
on a compact neighborhood $U\subset\mathbb{R}^{n}$. We consider system
(\ref{eq: nonlinear system}) a perturbation of the autonomous system
\begin{equation}
\dot{x}=Ax+f_{0}(x),\label{eq:unperturbed nonlinear system}
\end{equation}
that has a fixed point at $x=0$ by our assumptions on $f_{0}(x)$.
Both $A$ and $f_{0}(x)$ may additionally depend on parameters, which
we will suppress here for notational simplicity but will point out
in the statement of our main results. Note that the time-dependent
term $f_{1}(x,t)$ is also allowed to depend on the phase space variable
$x$. Consequently, in addition to describing purely time-dependent
external forcing, $f_{1}(x,t)$ can also capture what is commonly
called parametric forcing in the structural vibrations literature,
i.e., time-dependence in the internal structure of the system.

We assume that the origin is a hyperbolic fixed point of (\ref{eq:unperturbed nonlinear system}),
i.e., all eigenvalues in the spectrum of $A,$
\begin{equation}
\mathrm{spect}\left(A\right)=\left\{ \lambda_{1},\ldots,\lambda_{n}\right\} ,\label{eq:spect(A)}
\end{equation}
listed in the order
\begin{equation}
\mathrm{Re}\lambda_{n}\leq\mathrm{Re}\lambda_{n-1}\leq\ldots\leq\mathrm{Re}\lambda_{2}\leq\mathrm{Re}\lambda_{1},\label{eq:eigenvalue ordering}
\end{equation}
satisfy 
\begin{equation}
\mathrm{Re}\lambda_{j}\neq0,\quad j=1,\ldots,n.\label{eq:hyperbolicity assumption}
\end{equation}
For simplicity of exposition, we assume that $A$ is semisimple and
hence has $n$ eigenvectors $e_{1},\ldots,e_{n}\in\mathbb{C}^{n}$
corresponding to the eigenvalues listed in (\ref{eq:spect(A)}). The
$k^{th}$ \emph{eigenspace }$E_{k}$ of $A$ is then the linear span
of the real and imaginary parts of the eigenvectors corresponding
to the eigenvalue $\lambda_{k}$, i.e., 
\[
E_{k}=\mathrm{\underset{\text{A\ensuremath{e_{j}}=\ensuremath{\lambda_{k}e_{j}}}}{span}}\left\{ \mathrm{Re\,}e_{j},\mathrm{Im}\,e_{j}\right\} .
\]
Note that any such eigenspace in an invariant subspace under the dynamics
of the linearized ODE 
\begin{equation}
\dot{x}=Ax.\label{eq:linearized ODE}
\end{equation}

A \emph{spectral subspace} $E$ is the direct sum of a selected group
of $\ell$ eigenspaces, i.e.
\begin{equation}
E=E_{j_{1}}\oplus\ldots\oplus E_{j_{\ell}}.\label{eq:spectral subspace}
\end{equation}
 Two important spectral subspaces are the \emph{stable subspace} $E^{s}$
and the \emph{unstable subspace} $E^{u}$, defined as 
\begin{equation}
E^{s}=\underset{\mathrm{Re}\lambda_{j}<0}{\oplus}E_{j},\qquad E^{u}=\underset{\mathrm{Re}\lambda_{j}>0}{\oplus}E_{j},\qquad E^{s}\oplus E^{u}=\mathbb{R}^{n}.\label{eq:E^s and E^u}
\end{equation}
At least one of $E^{s}$ and $E^{u}$ is nonempty due to the hyperbolicity
assumption (\ref{eq:hyperbolicity assumption}). As a consequence
of this assumption, there exist also constants $K,\kappa>0$ such
that 
\begin{equation}
\left\Vert e^{At}\vert_{E^{s}}\right\Vert \leq Ke^{-\kappa t},\quad\left\Vert e^{-At}\vert_{E^{u}}\right\Vert \leq Ke^{-\kappa t},\quad t\geq0.\label{eq:dichotomy for autonomous ODE}
\end{equation}
This property of $A$ is usually referred to as exponential dichotomy.
For $\kappa$, we can select any positive number satisfying
\[
0<\kappa<\min_{1\leq j\leq n}\left|\mathrm{Re}\lambda_{j}\right|.
\]
In contrasts, the choice of $K$ depends on the eigenvector geometry
of $A$. Specifically, if $A$ is a normal operator and hence has
an orthogonal eigenbasis, we can select $K=1$. 

\subsection{Existence and computation of a non-autonomous anchor trajectory}

We first state general results on the fate of the $x=0$ fixed point
under the non-autonomous forcing term $f_{1}(x,t)$. Specifically,
we give conditions under which this fixed point perturbs for moderately
large $\left|f_{1}(x,t)\right|$ into a unique nearby hyperbolic trajectory
$x^{*}(t)$ of (\ref{eq: nonlinear system}). This distinguished trajectory
remains uniformly bounded for all times and has the same stability
type as the $x=0$ fixed point of system (\ref{eq:unperturbed nonlinear system}).
\begin{thm}
\label{thm:x^* under conditions}\textbf{\emph{{[}Existence of anchor
trajectory for non-autonomous SSMs{]}}} Assume that in a ball $B_{\delta}\subset U$
of radius $\delta>0$ around $x=0$, the functions $f_{0}$ and $f_{1}$
are of class $C^{0}$ and admit Lipschitz constants $L_{0}(\delta)$
and $L_{1}(\delta)$, respectively, in $x$ for all $t\in\mathbb{R}$.
Assume further that the conditions
\begin{equation}
\left|f_{1}(x,t)\right|\leq\frac{\kappa\delta}{2K}-\left|f_{0}(x)\right|,\qquad L_{1}(\delta)\leq\frac{\kappa}{4K}-L_{0}(\delta),\qquad x\in B_{\delta},\quad t\in\mathbb{R},\label{eq:conditions for x^*}
\end{equation}
are satisfied with constants $K,\kappa>0$ satisfying (\ref{eq:dichotomy for autonomous ODE}).
Then the following hold:

(i) System (\ref{eq: nonlinear system}) has a unique, uniformly bounded
trajectory $x^{*}(t)$ that remains is $B_{\delta}$  for all $t\in\mathbb{R}$
and has the same stability type as the $x=0$ fixed point of system
(\ref{eq:unperturbed nonlinear system}). 

(ii) The trajectory $x^{*}(t)$ is as smooth in any parameter as system
(\ref{eq: nonlinear system}).
\end{thm}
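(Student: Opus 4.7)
My plan is to obtain $x^{*}$ as the unique fixed point of a Lyapunov--Perron integral operator on the Banach space $X=C_{b}^{0}(\mathbb{R};\mathbb{R}^{n})$ with the sup-norm. Let $P^{s},P^{u}$ denote the spectral projections of $A$ associated with $E^{s}$ and $E^{u}$, and abbreviate $g(x,t)=f_{0}(x)+f_{1}(x,t)$. On the closed ball $X_{\delta}=\{y\in X:\|y\|_{\infty}\le\delta\}$, I would define
\begin{equation*}
T(y)(t)=\int_{-\infty}^{t}e^{A(t-s)}P^{s}g(y(s),s)\,ds-\int_{t}^{\infty}e^{A(t-s)}P^{u}g(y(s),s)\,ds.
\end{equation*}
Using (\ref{eq:dichotomy for autonomous ODE}) one checks in the standard way that $y\in X_{\delta}$ is a fixed point of $T$ if and only if $y$ is a solution of (\ref{eq: nonlinear system}) that remains in $B_{\delta}$ for all $t\in\mathbb{R}$, so the whole problem reduces to a fixed-point question for $T$ on $X_{\delta}$.

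\textbf{Self-map and contraction.} The two inequalities in (\ref{eq:conditions for x^*}) are engineered for precisely this operator. The first gives $|g(x,t)|\le\kappa\delta/(2K)$ on $B_{\delta}\times\mathbb{R}$; combined with the dichotomy bounds (\ref{eq:dichotomy for autonomous ODE}), each of the two integrals defining $T(y)(t)$ is dominated by $(K/\kappa)\cdot(\kappa\delta/(2K))=\delta/2$, so $\|T(y)\|_{\infty}\le\delta$ and $T:X_{\delta}\to X_{\delta}$. For the Lipschitz estimate, $|g(y_{1}(s),s)-g(y_{2}(s),s)|\le(L_{0}+L_{1})\|y_{1}-y_{2}\|_{\infty}$ and the same integral bounds produce a Lipschitz constant $(2K/\kappa)(L_{0}+L_{1})\le 1/2$ by the second inequality in (\ref{eq:conditions for x^*}). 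Banach's fixed-point theorem then delivers the unique anchor $x^{*}\in X_{\delta}$, settling the existence and uniqueness part of (i).

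\textbf{Stability type and parameter smoothness.} To finish (i), I would examine the linearization along $x^{*}$, namely $\dot{y}=[A+D_{x}g(x^{*}(t),t)]y$; its time-dependent coefficient has operator norm controlled pointwise by $L_{0}(\delta)+L_{1}(\delta)\le\kappa/(4K)$, the same smallness that gave contraction constant $1/2$ above. Coppel's roughness theorem for exponential dichotomies then furnishes a dichotomy along $x^{*}$ with projectors of the same rank as $P^{s}$ and $P^{u}$, so $x^{*}$ inherits the stability type of the origin in (\ref{eq:unperturbed nonlinear system}). For (ii), if $A,f_{0},f_{1}$ depend on a parameter $\mu$ in class $C^{r}$ (jointly in $(x,\mu)$), then $T=T_{\mu}$ becomes a $C^{r}$ family of uniform contractions on the complete metric space $X_{\delta}$, and the uniform contraction mapping principle yields $C^{r}$ dependence of $x^{*}(\,\cdot\,;\mu)$ on $\mu$ in the sup-norm.

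\textbf{Main obstacle.} The tight contraction estimate is built into the hypotheses, so the genuinely delicate step is the functional-analytic care surrounding $T$: verifying that $T$ maps $X$ continuously into $X$ (dominated convergence at $\pm\infty$, powered uniformly by (\ref{eq:dichotomy for autonomous ODE})) and that differentiation in $\mu$ can be passed under the two improper integrals in order for the uniform contraction principle to deliver true $C^{r}$ smoothness rather than just continuity. The roughness argument for the stability claim is standard but also requires a careful accounting to confirm that the margin actually provided by (\ref{eq:conditions for x^*}) is within the threshold under which the perturbed dichotomy preserves the ranks of $P^{s},P^{u}$.
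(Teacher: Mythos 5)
Your argument is essentially the paper's: Palmer's (1973) Lemma~1, which the paper cites, is itself proved by exactly the Lyapunov--Perron contraction you write out, and the numerical margins you extract from (\ref{eq:conditions for x^*}) coincide with the paper's (self-map bound $2K\mu/\kappa\le\delta$, contraction constant $2K(L_0+L_1)/\kappa\le1/2$). The only bookkeeping difference is localization: you restrict $T$ to the closed ball $X_\delta$ directly, while the paper applies a smooth cut-off to the right-hand side so that Palmer's global hypotheses hold verbatim and then checks that the resulting bounded solution actually lies in $B_\delta$; both are routine and equivalent. One small gap to close in your stability-type step: the theorem only assumes $f_0,f_1\in C^0$ with Lipschitz constants, so the linearization $\dot y=[A+D_xg(x^*(t),t)]y$ on which you base the Coppel roughness argument need not exist pointwise. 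To stay within the hypotheses you should either invoke a Lipschitz (Bohl--Perron type) version of dichotomy roughness, or, more in keeping with the operator-theoretic spirit of your proof, show that the Lyapunov--Perron operator for the shifted variable $y=x-x^*$ is still a contraction on the stable and unstable half-line Banach spaces, so that the stable/unstable fiber bundles over $x^*$ persist with the same ranks as $P^s,P^u$. The paper does not make this step explicit either, leaning implicitly on Palmer's construction.
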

\begin{proof}
See Appendix \ref{subsec:proof of existence of anchor trajectory}.
\end{proof}
By statement (i) of Theorem \ref{thm:x^* under conditions}, the anchor
trajectory $x^{*}(t)$ takes over the role of the $x=0$ equilibrium
of the unperturbed system (\ref{eq:unperturbed nonlinear system})
in the forced system (\ref{eq: nonlinear system}): they are both
unique uniformly bounded solutions in the ball $B_{\delta}$ for their
respective systems. Note that if $f_{0}$ and $f_{1}$ are $C^{1}$
in $x$, then the Lipschitz constants in the inequalities (\ref{eq:conditions for x^*})
can be chosen as 
\[
L_{0}(\delta)=\max_{x\in B_{\delta}}\left|D_{x}f_{0}(x)\right|,\quad L_{1}(\delta)=\max_{x\in B_{\delta},t\in\mathbb{R}}\left|D_{x}f_{1}(x,t)\right|.
\]

A unique hyperbolic anchor trajectory $x^{*}(t)$ may well exist even
if the strict bounds listed in (\ref{eq:conditions for x^*}) are
not satisfied. For this reason, in the following theorem, we will
simply assume that such a trajectory $x^{*}(t)$ exists as a perturbation
of the $x=0$ fixed point and provide a recursively implementable,
formal approximation for $x^{*}(t)$ up to any desired order. These
formulas will only assume the uniform-in-time boundedness of $f_{1}$
and its derivatives with respect to $x$ at $x=0$, without assuming
the specific bounds on $f_{1}$ listed in Theorem \ref{thm:x^* under conditions}.
In particular, the non-autonomous term $f_{1}$ does not even have
to be continuous in time for these formulas to be well-defined. This
will enable predictions for anchor trajectories their SSMs in physical
systems even under temporally discontinuous forcing terms or under
specific realizations of bounded random forcing. 

To state our recursively computable formulas for $x^{*}(t)$ in our
upcoming Theorem \ref{thm: expansion for x^*}, we will use a matrix
$T\in\mathbb{R}^{n\times n}$ whose columns comprise the real and
imaginary parts of the eigenvectors of $A$. We order these columns
in a way that $T$ block-diagonalizes $A$ into a stable block $A_{s}$
and unstable block $A_{u}$:
\begin{equation}
T^{-1}AT=\left(\begin{array}{cc}
A_{s} & 0\\
0 & A_{u}
\end{array}\right),\quad\mathrm{spect}\left(A_{s}\right)=\mathrm{spect}\left(A\vert_{E^{s}}\right),\quad\mathrm{spect}\left(A_{s}\right)=\mathrm{spect}\left(A\vert_{E^{s}}\right).\label{eq:T definition}
\end{equation}
For later purposes, we also define the time-dependent matrix $G(t)\in\mathbb{R}^{n\times n}$
as 
\begin{equation}
G(t)=\left\{ \begin{array}{ccc}
T\left(\begin{array}{cc}
e^{A^{s}t} & 0\\
0 & 0
\end{array}\right)T^{-1}, &  & t\geq0,\\
\\
T\left(\begin{array}{cc}
0 & 0\\
0 & -e^{A^{u}t}
\end{array}\right)T^{-1}, &  & t<0.
\end{array}\right.\label{eq:Green's function definition}
\end{equation}
 Notice that if $x=0$ is either asymptotically stable ($E^{u}=\left\{ 0\right\} $
and $A^{s}=T^{-1}AT$) or repelling ($E^{s}=\left\{ 0\right\} $ and
$A^{u}=T^{-1}AT$), then formula (\ref{eq:Green's function definition})
simplifies to
\[
G(t)=\left\{ \begin{array}{ccc}
e^{At}, &  & t\geq0,\\
\\
0, &  & t<0,
\end{array}\right.\quad\mathrm{or}\quad G(t)=\left\{ \begin{array}{ccc}
0, &  & t\geq0,\\
\\
-e^{At}, &  & t<0,
\end{array}\right.
\]
respectively. Using these quantities, we obtain the following approximation
result for the anchor trajectory $x^{*}(t)$.
\begin{thm}
\label{thm: expansion for x^*}\textbf{\emph{{[}Computation of anchor
trajectory for non-autonomous SSMs{]}}} Assume that a unique, uniformly
bounded trajectory $x^{*}(t)\subset U$ of (\ref{eq: nonlinear system})
exists in a compact neighborhood $U\subset\mathbb{R}^{n}$ of $x=0$,
with $x^{*}(t)$ perturbing from $x=0$ under the forcing term $f_{1}(x,t)$
that satisfies (\ref{eq:f_1 uniformly bounded}). Assume further that
$f_{1}$ has $r\geq1$ continuous derivatives with respect to $x$
at $x=0$ that are uniformly bounded in $t$ within $U$. Then, for
any positive integer $N\leq r$, a formal expansion for $x^{*}(t)$
exists in the form
\begin{equation}
x^{*}(t)=\sum_{\nu=1}^{N}x_{\nu}(t)+o\left(\left\Vert f_{1}\right\Vert _{U}^{N}\right),\label{eq:anchor trajectory expansion}
\end{equation}
 where $x_{\nu}(t)=\left(x_{\nu}^{1}(t),\ldots,x_{\nu}^{n}(t)\right)=\mathcal{O}\left(\left\Vert f_{1}\right\Vert _{U}^{\nu}\right)$
and 
\end{thm}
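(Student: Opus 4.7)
The plan is to represent the bounded solution $x^*(t)$ as a convolution with the Green's function $G$ from (\ref{eq:Green's function definition}), Taylor expand the nonlinearities in $x$ at the origin, and then match orders in a power-series ansatz.

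First, because $A$ is hyperbolic, the linear operator $\partial_{t} - A$ is a bijection on the Banach space $C_b(\mathbb{R}, \mathbb{R}^n)$ of bounded continuous functions, whose inverse is convolution with $G$. Applying this inverse to the bounded inhomogeneity $g(t) := f_0(x^*(t)) + f_1(x^*(t), t)$ (bounded because the assumed $x^*$ remains in the compact set $U$) yields the integral identity
\[
x^*(t) = \int_{-\infty}^{\infty} G(t-s)\bigl[f_0(x^*(s)) + f_1(x^*(s), s)\bigr]\, ds.
\]
This is the only representation a bounded solution can have, so any formal expansion of $x^*$ must satisfy it order by order.

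Next, I would Taylor expand $f_0$ and $f_1$ in $x$ about $x=0$ up to order $r$. Since $f_0(x) = o(|x|)$, the expansion of $f_0$ begins at quadratic order, whereas $f_1$ expands as $f_1(0,t) + \sum_{k=1}^{r}\tfrac{1}{k!} D_x^k f_1(0,t)[x,\ldots,x] + o(|x|^r)$ with coefficients uniformly bounded in $t$ by hypothesis. Substituting the ansatz $x^*(t) = \sum_{\nu=1}^{N} x_{\nu}(t)$ with $x_{\nu} = \mathcal{O}(\|f_1\|_U^{\nu})$ into the integral identity and collecting terms at each power of $\|f_1\|_U$ produces
\[
x_{1}(t) = \int_{-\infty}^{\infty} G(t-s)\, f_1(0, s)\, ds, \qquad x_{\nu}(t) = \int_{-\infty}^{\infty} G(t-s)\, h_{\nu}(s)\, ds \quad (\nu \geq 2),
\]
where $h_{\nu}(s)$ is an explicit polynomial expression in $x_1(s), \ldots, x_{\nu-1}(s)$ whose coefficients are the Taylor derivatives of $f_0$ at $0$ and of $f_1$ at $(0, s)$, organized by all partitions of $\nu$. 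Uniform boundedness of each $x_{\nu}$ is automatic from the $L^1$-estimate $\int_{-\infty}^{\infty} \|G(\tau)\|\, d\tau \leq 2K/\kappa$, which is immediate from (\ref{eq:dichotomy for autonomous ODE}).

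The main obstacle is not the derivation of the recursion, which is essentially algebraic, but the justification of the remainder estimate $x^*(t) - \sum_{\nu=1}^{N} x_{\nu}(t) = o(\|f_1\|_U^{N})$. I would dispose of this by writing $x^*(t) = \sum_{\nu=1}^{N} x_{\nu}(t) + R_N(t)$, substituting into the integral identity, subtracting the recursive relations for the $x_{\nu}$, and recasting the result as a fixed-point problem for $R_N$ in $C_b(\mathbb{R}, \mathbb{R}^n)$. The inhomogeneity of this fixed-point problem is of order $\|f_1\|_U^{N+1}$, while the $L^1$-bound on $G$ combined with the Lipschitz control of $f_0$ and the derivatives of $f_1$ on $U$ makes the map a contraction on a ball of that radius, yielding the required decay rate of $R_N$. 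Finally, smoothness of each $x_{\nu}$ in any parameter follows by differentiation under the integral, since $G$ is parameter-independent and the integrands depend smoothly on parameters by hypothesis.
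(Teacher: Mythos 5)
Your proposal is correct and arrives at the same recursion, but the packaging differs from the paper's in two respects worth noting. The paper introduces an explicit bookkeeping parameter $\epsilon$ via $f_{1}=\epsilon\tilde{f}_{1}$, posits a Taylor expansion $x^{*}(t)=\sum_{\nu}\epsilon^{\nu}\xi_{\nu}(t)$ whose validity (including the $o(\epsilon^{N})$ remainder) is taken from the smooth parameter dependence already established in Theorem 1(ii), substitutes into the \emph{differential} equation to obtain inhomogeneous linear ODEs for the $\xi_{\nu}$, and only then solves those ODEs by variation of parameters together with the $t_{0}\to\mp\infty$ limits, which is where the Green's function $G$ enters; the explicit form of the coefficients is then extracted from the multivariate Fa\`a di Bruno formula of Constantine and Savits. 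You instead start from the Lyapunov--Perron integral identity $x^{*}=\int G(t-s)\,[f_{0}(x^{*})+f_{1}(x^{*},s)]\,ds$, substitute the ansatz there, and control the remainder by a contraction-mapping argument using the $L^{1}$ bound $\int\|G\|\le 2K/\kappa$. These are two equivalent formulations of the same computation, but yours is more self-contained on the remainder: it does not lean on the abstract parameter-smoothness of the anchor trajectory, at the cost of requiring the Lipschitz constants to be small enough for the contraction (which is available under the hypotheses of Theorem 1). Two minor caveats: formally you should expand in a bookkeeping parameter rather than literally "in powers of $\|f_{1}\|_{U}$", since the sup-norm is not a variable one differentiates with respect to (this is exactly what the paper's $\epsilon$-scaling accomplishes); and you assert rather than derive that $h_{\nu}$ is "organized by all partitions of $\nu$", whereas the paper makes this explicit via Fa\`a di Bruno, which is what ultimately yields the closed-form index set $p_{s}(\nu,\boldsymbol{\gamma})$ in (\ref{eq:x_nu hyperbolic case}). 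Neither caveat is a gap; both are matters of making the argument precise.
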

\begin{align}
x_{\nu}(t)= & \sum_{1\leq\left|\mathbf{\mathbf{\boldsymbol{\gamma}}}\right|\leq\nu}\,\sum_{s=1}^{\nu}\sum_{p_{s}\left(\nu,\mathbf{\mathbf{\boldsymbol{\gamma}}}\right)}\int_{-\infty}^{\infty}G(t-\tau)\left[\frac{\partial^{\left|\mathbf{\mathbf{\boldsymbol{\gamma}}}\right|}f_{0}\left(0\right)}{\partial x_{1}^{\gamma_{1}}\cdots\partial x_{n}^{\gamma_{n}}}\prod_{j=1}^{s}\frac{\prod_{i=1}^{n}\left[x_{\ell_{j}}^{i}(\tau)\right]^{k_{ji}}}{\prod_{i=1}^{n}k_{ji}!}\right]d\tau\label{eq:x_nu hyperbolic case}\\
 & +\sum_{1\leq\left|\mathbf{\mathbf{\boldsymbol{\gamma}}}\right|\leq\nu-1}\,\sum_{s=1}^{\nu-1}\sum_{p_{s}\left(\nu-1,\mathbf{\mathbf{\boldsymbol{\gamma}}}\right)}\int_{-\infty}^{\infty}G(t-\tau)\left[\frac{\partial^{\left|\mathbf{\mathbf{\boldsymbol{\gamma}}}\right|}f_{1}\left(0,\tau\right)}{\partial x_{1}^{\gamma_{1}}\cdots\partial x_{n}^{\gamma_{n}}}\prod_{j=1}^{s}\frac{\prod_{i=1}^{n}\left[x_{\ell_{j}}^{i}(\tau)\right]^{k_{ji}}}{\prod_{i=1}^{n}k_{ji}!}\right]d\tau,\quad\nu\geq1.\nonumber 
\end{align}
\emph{Here $k_{ji}$ is the $i^{th}$ component of the integer vector
}\textbf{\emph{$\mathbf{k}_{j}\in\mathbb{N}^{n}-\left\{ \mathbf{0}\right\} $
}}\emph{appearing}\textbf{\emph{ }}\emph{in the index set}
\[
p_{s}\left(\nu,\mathbf{\mathbf{\boldsymbol{\gamma}}}\right)=\left\{ \left(\mathbf{k}_{1},\ldots,\mathbf{k}_{s},\ell_{1},\ldots,\ell_{s}\right):\mathbf{k}_{i}\in\mathbb{N}^{n}-\left\{ \mathbf{0}\right\} ,\ell_{i}\in\mathbb{N},0<\ell_{1}<\cdots<\ell_{s},\sum_{i=1}^{s}\mathbf{k}_{i}=\mathbf{\mathbf{\boldsymbol{\gamma}}},\sum_{i=1}^{s}\left|\mathbf{k}_{i}\right|\ell_{i}=\nu\right\} .
\]

\begin{proof}
See Appendix \ref{subsec:proof of approximation of x^*}.
\end{proof}
As an example of the approximation provided by Theorem \ref{thm: expansion for x^*}
for $x^{*}(t)$, we evaluate formula (\ref{eq:x_nu hyperbolic case})
up to second order ($N=2)$ for the case wherein the $x=0$ fixed
point is attracting ($E^{u}=\emptyset$) for $f_{1}\left(x,\tau\right)\equiv0$.
In that case, we obtain
\begin{align}
x^{*}(t) & =x_{1}(t)+x_{2}(t)+o\left(\left\Vert f_{1}\right\Vert _{U}^{2}\right),\label{eq:second-order approximation for x^*}\\
x_{1}(t) & =\int_{-\infty}^{t}e^{A\left(t-\tau\right)}f_{1}(0,\tau)\,d\tau,\nonumber \\
x_{2}(t) & =\int_{-\infty}^{t}e^{A\left(t-\tau\right)}\left[\frac{1}{2}\partial_{x}^{2}f_{0}(0)\otimes x_{1}(\tau)\otimes x_{1}(\tau)+\partial_{x}f_{1}(0,\tau)x_{1}(\tau)\right]d\tau,\nonumber 
\end{align}
where $\partial_{x}^{2}f_{0}(0)$ is a three-tensor and $\otimes$
refers to the tensor product.
\begin{rem}
\label{rem: temporally  dicontinuous forcing OK}{[}\textbf{Applicability
to temporally discontinuous forcing}{]} The asymptotic approximation
(\ref{eq:x_nu hyperbolic case}) only requires $f_{1}$ and its $x$-derivatives
at $x=0$ to be uniformly bounded in $t$, as we noted earlier. No
derivatives of $f_{1}(x,t)$ are required to exist with respect to
$t$, and hence temporally discontinuous forcing is also covered by
these formal expansions, as we will also see on a specific example
with discontinuous chaotic forcing in Section \ref{subsec:weakly forced cart}. 
\end{rem}
~~~
\begin{rem}
\label{rem: special case of external weak forcing}{[}\textbf{Simplification
for state-independent forcing}{]} In applications to structural vibrations,
the non-autonomous forcing term in system (\ref{eq: nonlinear system})
often arises from external forcing that does not depend on $x$, i.e.,
$f_{1}(x,t)\equiv f_{1}(t)$. In that case, the second term in formula
(\ref{eq:x_nu hyperbolic case}) is identically zero and the summands
in the first term are well defined as long as $f_{0}(x)$ an its derivatives
are bounded at $x=0$. 
\end{rem}

\subsection{Existence and computation of non-autonomous SSMs }

We first recall available SSM results for the autonomous system (\ref{eq:unperturbed nonlinear system}).
Let $E$ be a spectral subspace, as defined in eq. (\ref{eq:spectral subspace}).
Following the terminology of \citet{haller23}, we call $E$ a \emph{like-mode
spectral subspace }if the sign of $\mathrm{Re}\lambda_{j}$ is the
same for all $\lambda_{j}\in\mathrm{spect}\left(A\vert_{E}\right)$.
Otherwise, we call $E$ a \emph{mixed-mode spectral subspace. }

We call a like-mode spectral subspace $E$ \emph{externally nonresonant}
if 
\begin{equation}
\lambda_{j}\neq\sum_{\lambda_{k}\in\mathrm{spect}\left(A\vert_{E}\right)}m_{k}\lambda_{k},\quad\lambda_{j}\in\mathrm{spect}\left(A\right)-\mathrm{spect}\left(A\vert_{E}\right),\label{eq:external non-resonance condition}
\end{equation}
for any choice of $m_{k}\in\mathbb{N}$ with $\sum_{k=1}^{n}m_{k}\geq2$
and for any choice of $\lambda_{j}$. It turns out that if the condition
(\ref{eq:external non-resonance condition}) is satisfied for $m_{k}$
coefficients up to order $\sum_{k=1}^{n}m_{k}=\sigma\left(E\right)$,
where $\sigma\left(E\right)$ is the spectral quotient defined by
\citet{haller16}, and then the same condition will also be satisfied
for \emph{all} choices of $m_{k}$ with $\sum_{k=1}^{n}m_{k}\geq2$
(\citet{cabre03}). 

If $E$ is an externally nonresonant, like-mode subspace, then $E$
has a unique, smoothest nonlinear continuation in the form of a primary
spectral submanifold (or \emph{primary SSM}), denoted $\mathcal{W}^{\infty}\left(E\right)\in C^{\infty}$.
This was deduced by \citet{haller16} in this context from the more
abstract, general results of \citet{cabre03}. The primary SSM $\mathcal{W}^{\infty}\left(E\right)$
is an invariant manifold of system (\ref{eq:unperturbed nonlinear system})
that is tangent to $E$ at the origin and has the same dimension as
$E$, as shown in Fig. \ref{fig:Geometry of non-autonomous SSM} for
$f_{1}(x,t)\equiv0$ in the extended phase space.
\begin{figure}[H]
\centering{}\includegraphics[width=0.8\textwidth]{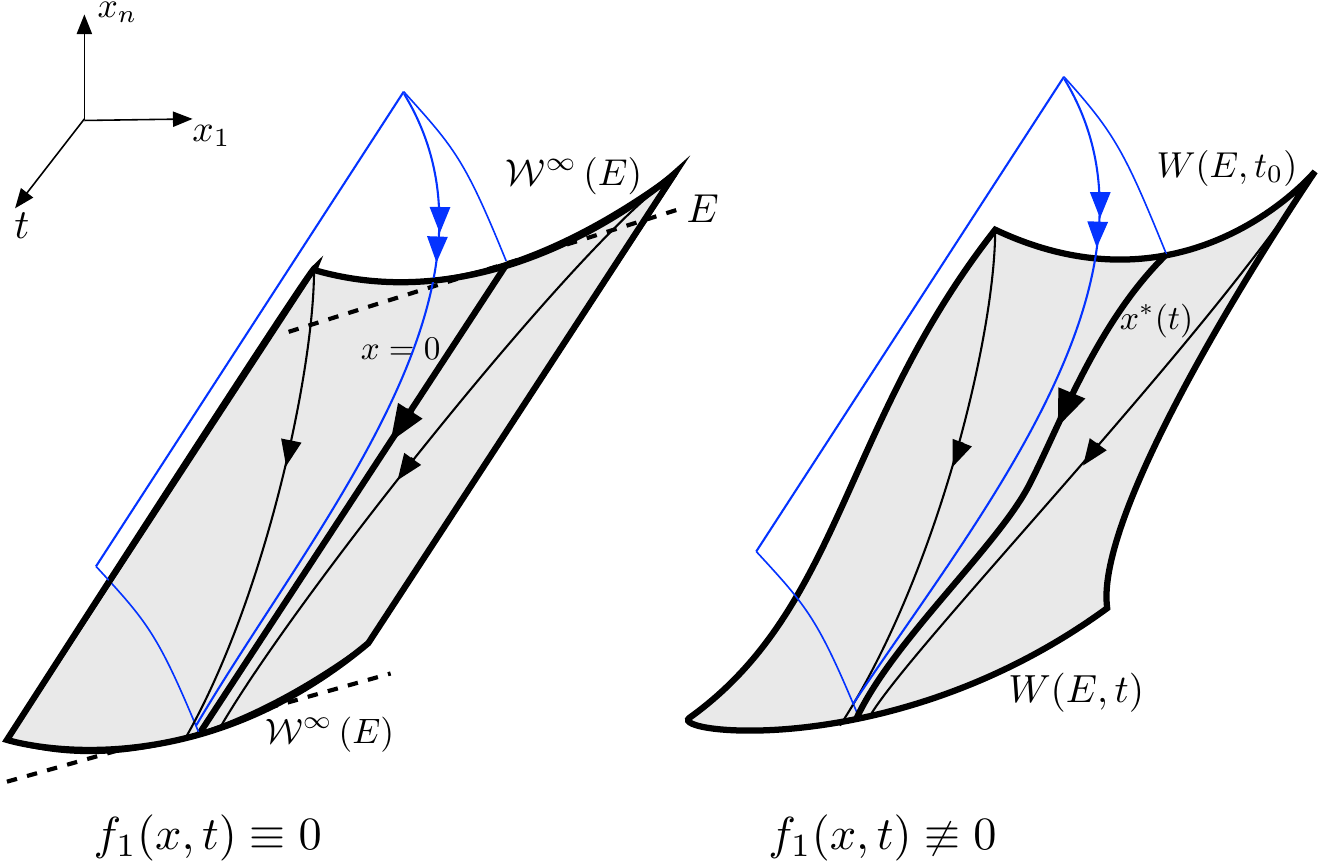}\caption{Left: The geometry of the autonomous SSM $\mathcal{W}^{\infty}\left(E\right)$
in the extended phase space of the $(x,t)$ variables. Shown is the
primary (smoothest SSM) tangent to the spectral subspace $E$ at the
fixed point at $x=0$. Also shown is a fast SSM (blue), defined as
the spectral submanifold that is tangent to the direct sum of the
eigenspaces outside $E$. Right: The anchor trajectory $x^{*}(t)$
and the time-dependent SSM $W\left(E,t\right)$ in the full, non-autonomous
system. \label{fig:Geometry of non-autonomous SSM}}
\end{figure}
In general, an infinite family, $\mathcal{W}\left(E\right)$, of SSMs
exists for system (\ref{eq:unperturbed nonlinear system}) with the
same properties, but $\mathcal{W}^{\infty}\left(E\right)$ is the
unique smoothest among them: the other manifolds in $\mathcal{W}\left(E\right)$
are no more than $\sigma\left(E\right)$-times differentiable (see
\citet{haller23}). A computational algorithm for $\mathcal{W}^{\infty}\left(E\right)$
was developed by \citet{ponsioen2018}, then extended to general,
finite-element-grade problems in second-order ODE form by \citet{jain2022}.
The latest version of the latter algorithm with further extensions
is available in the open-source MATLAB live script package \texttt{SSMTool}
(see \citet{jain23}).

If the spectrum of $A$ is \emph{fully nonresonant}, i.e., 

\begin{equation}
\lambda_{j}\neq\sum_{k=1}^{n}m_{k}\lambda_{k},\quad m_{k}\in\mathbb{N},\quad\sum_{k=1}^{n}m_{k}\geq2,\quad j=1,\ldots,n,\label{eq:full nonresonance conditions}
\end{equation}
then an invariant manifold family $\mathcal{W}\left(E\right)$ tangent
to $E$ at $x=0$ exists for any choice of $E$, whether or not $E$
is of like-mode or mixed mode type. While $\mathcal{W}\left(E\right)$
may contain just a single manifold (e.g., when $E$ is the stable
subspace, $E^{s}$ or the unstable subspace, $E^{u}$, defined in
(\ref{eq:E^s and E^u})), there will be infinitely members in $\mathcal{W}\left(E\right)$
for more general choices of $E$. In the latter case, generic members
of $\mathcal{W}\left(E\right)$ are \emph{secondary }(or\emph{ fractiona}l)
\emph{SSMs}, which have a finite order of differentiability consistent
with their fractional-powered polynomial representations derived explicitly
by \citet{haller23}.

Note that the nonresonance conditions (\ref{eq:external non-resonance condition})-(\ref{eq:full nonresonance conditions})
are less restrictive than what is customary in the nonlinear vibrations
literature. Indeed, conditions (\ref{eq:external non-resonance condition})-(\ref{eq:full nonresonance conditions})
are only violated if both the real and the complex parts of the eigenvalues
satisfy simultaneously exactly the same resonance relationship. Also
note that $1:1$ resonances involving eigenvalues with nonzero real
parts do \emph{not} violate (\ref{eq:external non-resonance condition})-(\ref{eq:full nonresonance conditions}).
We will, however, only allow a $1\colon1$ resonance within $E$ to
guarantee the normal hyperbolicity of $E$ for the existence of $\mathcal{W}\left(E\right)$.
(More specifically, a $1\colon1$ resonance between the spectrum of
$A$ within $E$ and outside $E$ would render our upcoming assumption
(\ref{eq:spectral subspace-1}) to fail for any $\rho>1$). If a $1\colon1$
resonance arises in a given application, one can simply enlarge $E$
to include all resonant modes. This, in turn, removes any issue with
the $1\colon1$ resonance.

We now turn to the existence of non-autonomous SSMs emanating from
the uniformly bounded hyperbolic trajectory $x^{*}(t)$, whose existence
and approximation were discussed in Theorems \ref{thm:x^* under conditions}
and \ref{thm: expansion for x^*}. We will focus on \emph{slow SSMs}
(also called \emph{pseudo-unstable manifolds}), which are continuations
of $d$-dimensional, $\rho$-\emph{normally attracting} (like-mode
or mixed-mode) spectral subspaces $E$ of the linearized system (\ref{eq:linearized ODE}),
i.e., can be written as
\begin{equation}
E=E_{1}\oplus\ldots\oplus E_{k},\qquad\dim E=d\geq1,\qquad\mathrm{\frac{\mathrm{Re}\text{\ensuremath{\lambda_{k}}}}{\mathrm{Re}\text{\ensuremath{\lambda_{k+1}}}}\leq\frac{1}{\rho}},\label{eq:spectral subspace-1}
\end{equation}
for some integer $\rho>1$. Therefore, $E$ is spanned by the $k$
modal subspaces carrying the slowest decaying solution families of
system (\ref{eq:linearized ODE}), including possibly some families
that do not even decay but grow. If such unstable modal subspaces
are present in the direct sum (\ref{eq:spectral subspace-1}), then
$E$ is a mixed-mode spectral subspace which we seek to continue into
a mixed mode non-autonomous SSM in system \eqref{eq: nonlinear system}.
If, in contrast, only stable modal subspaces are present in the direct
sum (\ref{eq:spectral subspace-1}), then $E$ is a like-mode spectral
subspace which we seek to continue into a like-mode non-autonomous
SSM in system \eqref{eq: nonlinear system}. 

Note that the third condition in (\ref{eq:spectral subspace-1}) always
holds for arbitrary $\rho>1$ if $\mathrm{Re}\lambda_{k}$ and $\mathrm{Re}\lambda_{k+1}$
have different signs. If $\mathrm{Re}\lambda_{k}$ and $\mathrm{Re}\lambda_{k+1}$
have the same sign, then the third condition in (\ref{eq:spectral subspace-1})
holds only if the decay exponents of solutions outside $E$ are at
least $\rho$-times stronger than those of solutions inside $E$.
This $\rho$ will then determine the maximal smoothness that we can
\emph{a priori} guarantee for the SSM emanating from $E$ without
further assumptions. Formal expansions for such SSMs will, however,
indicate higher degrees of smoothness under appropriate nonresonance
conditions.

The following theorem gives our main result on the existence of non-autonomous
SSMs associated with a spectral subspace $E$ satisfying (\ref{eq:spectral subspace-1}).
We will use the term \emph{locally invariant manifold }when referring
to a manifold carrying trajectories that can only leave the manifold
through its boundary (see, e.g., \citet{fenichel71}).
\begin{thm}
\label{thm:non-automous SSM}\textbf{\emph{{[}Existence of non-autonomous
SSMs{]}}} Assume that conditions (\ref{eq:conditions for x^*}) of
Theorem \ref{thm:x^* under conditions} are satisfied in a ball $B_{\delta}\subset\mathbb{R}^{n}$
of radius $\delta>0$ around $x=0$. Assume further that $f_{0},f_{1}\in C^{r}(B_{\delta})$
for some $r>1$ and the uniform bound
\begin{equation}
\left|\partial_{x}^{2}f_{1}(x,t)\right|\leq K_{3},\quad x\in B_{\delta},\quad t\in\mathbb{R},\label{eq:Hessian of f_1}
\end{equation}
holds for some finite constant $K_{3}>0$. Assume finally that $E$
is a $\rho$-normally hyperbolic spectral subspace with $1<\rho\leq r$
that satisfies the nonresonance conditions
\end{thm}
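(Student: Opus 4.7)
The strategy is to shift the anchor trajectory to the origin and then construct the SSM as a time-dependent graph over $E$ via a Lyapunov--Perron / parameterization-method argument in the spirit of \citet{cabre03} and \citet{haro06}. First I would introduce $y = x - x^*(t)$, where $x^*(t)$ is the anchor trajectory produced by Theorem~\ref{thm:x^* under conditions}. The transformed equation takes the form $\dot y = A y + g_0(y) + g_1(y,t)$, with $g_0(y) = o(|y|)$ autonomous, $g_1(0,t) \equiv 0$, and $\partial_y g_1(0,t)$ uniformly small in $t$; the smallness is controlled by $\|f_1\|_U$ together with the Lipschitz data in~(\ref{eq:conditions for x^*}) and the Hessian bound~(\ref{eq:Hessian of f_1}). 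Thus $y = 0$ is an exact bounded trajectory of the shifted system and the linearization along it is a uniformly small, time-dependent perturbation of $A$, which still admits an exponential dichotomy inherited from~(\ref{eq:dichotomy for autonomous ODE}).

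With the shift in place I would seek the non-autonomous SSM as a graph $y = h(\xi,t)$, $\xi \in E$, tangent to $E$ at the origin and uniformly bounded in $t$. Substituting this ansatz into the invariance equation and matching powers of $\xi$ produces a hierarchy of cohomological equations; the nonresonance conditions of the theorem together with the spectral gap in~(\ref{eq:spectral subspace-1}) make them solvable order by order with uniformly bounded, time-dependent coefficients, yielding a formal polynomial expansion of $h$ up to order $\rho$. To upgrade this formal object to an actual invariant manifold I would convert the full invariance equation into a fixed-point problem for $h$ on a Banach space of $C^\rho$ maps with uniform-in-$t$ norm, using the matrix $G(t-\tau)$ of~(\ref{eq:Green's function definition}) to split the Lyapunov--Perron integral into its stable and unstable parts. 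The $\rho$-normal hyperbolicity gap then makes this operator a contraction on a small ball once $\|f_1\|_U$ is small enough.

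An essentially equivalent alternative is to lift to the extended autonomous system on $\mathbb{R}^n \times \mathbb{R}$ by adjoining $\dot t = 1$ and to invoke Fenichel-type persistence of the compactified piece of $\mathcal{W}^\infty(E) \times \mathbb{R}$ as a normally hyperbolic locally invariant manifold: the autonomous SSM exists by \citet{cabre03,haller16} under the same nonresonance conditions, and $W(E,t)$ arises as its small non-autonomous perturbation. Smoothness in any parameters would be inherited from the smoothness of the right-hand side and of $x^*(t)$, exactly as in Theorem~\ref{thm:x^* under conditions}.

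The main obstacle is the smoothness bookkeeping. The autonomous theory delivers exactly $C^\rho$ regularity because $\rho$ saturates the spectral gap, and the time-dependent perturbation must not degrade this count. I would handle this by treating $t$ as a parameter and running the contraction in a norm that only sees $\xi$-derivatives, using the nonresonance conditions to rule out the small-divisor loss of regularity that would otherwise occur; mere continuity in $t$ would then suffice for the construction, in line with Remark~\ref{rem: temporally  dicontinuous forcing OK}. Uniqueness of $W(E,t)$ among $C^\rho$ SSMs tangent to $E$ along $x^*(t)$ falls out of the contraction, and its normal hyperbolicity, needed for the persistence claim advertised in the introduction, follows from the closeness of the time-dependent linearization along $x^*(t)$ to $A$.
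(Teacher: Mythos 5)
Your proposal shares only the first step with the paper's proof: the shift $y = x - x^*(t)$ to the anchor trajectory, which brings the system to the form (\ref{eq:perturbed, transformed}). After that, both of your suggested routes diverge from what the paper actually does, and each has a gap worth naming.

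Your Lyapunov--Perron route (the graph ansatz $y = h(\xi,t)$, cohomological hierarchy, contraction in a $C^\rho$ norm built on the Green's function $G(t-\tau)$) is a plausible self-contained alternative, but you correctly identify and then do not resolve its central difficulty: showing that the contraction actually closes in a $C^\rho$ ball without loss of derivatives, uniformly in $t$. The paper sidesteps exactly this by not running its own fixed-point argument. Instead it appends the \emph{static} parameter $\epsilon$ as an extra coordinate (i.e.\ it sets $\dot\epsilon = 0$, not $\dot t = 1$), producing the extended system (\ref{eq:dot Y eq def})--(\ref{eq:A G def}) with a non-hyperbolic fixed point at $Y=0$, and then applies the non-autonomous invariant manifold results of Kloeden and Rasmussen (their Theorem 6.10 and Remark 6.6), which are organized around the Sacker--Sell dichotomy spectrum. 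That theorem delivers the $C^{m_j^\pm}$ regularity \emph{and} the smooth $\epsilon$-dependence (your statement (ii)) directly from the spectral-gap inequalities, so the smoothness bookkeeping that is the crux of your contraction argument is pre-packaged. Your route would need the full uniform-$t$ fiber-contraction analysis to recover those same conclusions.

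Your second route (adjoin $\dot t = 1$ and invoke Fenichel-type persistence of a compactified piece of $\mathcal{W}^{\infty}(E)\times\mathbb{R}$) is the one the paper itself warns against in the Introduction: the extension by the time variable turns the anchor trajectory into a curve with a neutrally stable tangent direction, so Fontich--et al.-style results for hyperbolic attracting orbits do not apply. Something along these lines can be salvaged, but only by switching to non-compact NHIM theory (Eldering) plus the ``wormhole'' construct to handle the boundary of the truncated manifold, and the paper's remark explicitly confines that alternative to the case of an asymptotically stable unperturbed fixed point (like-mode SSMs with an inflowing-invariant cut). As stated, your version neither flags the non-compactness/boundary problem nor restricts to the stable case, so it is incomplete for the mixed-mode spectral subspaces that Theorem~\ref{thm:non-automous SSM} is intended to cover. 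Finally, a smaller point: your claim that the shifted linearization $A + \partial_x f(x^*(t),t)$ inherits an exponential dichotomy from $A$ requires a roughness-of-dichotomy argument and a bound on $\lvert\partial_x f(x^*(t),t)\rvert$, which is precisely what conditions (a)--(b) in the paper's proof (and hence the Hessian hypothesis (\ref{eq:Hessian of f_1})) are there to supply; this should be made explicit rather than asserted.
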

\begin{equation}
\lambda_{j}\neq\sum_{k=1}^{n}m_{k}\lambda_{k},\quad m_{k}\in\mathbb{N},\quad\sum_{k=1}^{n}m_{k}\geq2,\quad j=1,\ldots,n.\label{eq:full nonresonance conditions-1}
\end{equation}
\emph{Then:}
\begin{description}
\item [{\emph{(i)}}] \emph{There exists a non-autonomous spectral submanifold
$W(E,t)\subset B_{\delta}$ of class $C^{\rho}$ that has the same
dimension as $E$, contains $x^{*}(t)$ and acts as a locally invariant
manifold for system \eqref{eq: nonlinear system}. }
\item [{\emph{(ii)}}] \emph{The SSM $W(E,t)$ is as smooth in any parameter
as system (\eqref{eq: nonlinear system}).}
\end{description}
\begin{proof}
See Appendix \ref{subsec:Proof of existence of SSM}.
\end{proof}
We sketch the geometry of the non-autonomous SSM, $W(E,t)$, for non-vanishing
$f_{1}(x,t)$ in Fig. \ref{fig:Geometry of non-autonomous SSM}.
\begin{rem}
\textbf{{[}Uniqueness and smoothness of non-autonomous SSMs}{]} The
manifolds discussed in Theorem \ref{thm:non-automous SSM} are generally
non-unique. In principle, an argument involving the smoothness of
the Lyapunov and dichotomy spectra under generic perturbation (see
\citet{son17}) could be invoked to deduce sharper smoothness results
for these manifolds from linearization (see \citet{yomdin88}). These
results, however, require the identification of the full Lyapunov
spectrum of the linear part of the non-autonomous linear differential
equation (\ref{eq:perturbed, transformed}) in our Appendix \ref{subsec:Proof of existence of SSM}
in order to exclude resonances. This is generally challenging for
equations and unrealistic for data sets. As an alternative, Theorem
\ref{thm:computation of non-automous SSM} below will provide unique
Taylor expansions for $W(E,t)$ under explicitly verifiable nonresonance
conditions. These expansions are varied on each persisting non-autonomous
SSM up to the degree of smoothness of the SSM. We conjecture that,
just as in the time-periodic and time-quasiperiodic case treated by
\citet{cabre03} and \citet{haro06}, a unique member of the $W(E,t)$
family of manifolds will be smoother than all the others. Our Taylor
expansion will then approximate those unique, smoothest manifolds
at orders higher than the spectral quotient $\Sigma(E)$ defined in
\citet{haller16}. The remaining manifolds can be constructed via
time-dependent versions of the fractional expansions identified in
\citet{haller23}.
\end{rem}
\begin{rem}
\textbf{{[}Non-autonomous SSMs without anchor trajectories{]}} For
stable hyperbolic fixed points in the $f_{1}(x,t)\equiv0$ limit,
an alternative to the proof in Appendix \ref{subsec:Proof of existence of SSM}
for Theorem \ref{thm:non-automous SSM} can also be given. This alternative
uses the theory of non-compact normally hyperbolic invariant manifolds
(see \citet{eldering13}) coupled with the ``wormhole'' construct
of \citet{eldering18} that enables the handling of inflowing-invariant
normally attracting invariant manifolds. Following the steps of our
proof in Appendix \ref{subsec:Existence-of-the-adiabatic-SSM} for
the adiabatic case, this alternative proof yields results similar
to those in Theorem \ref{thm:non-automous SSM} but does not rely
on a persisting anchor trajectory $x^{*}(t)$ near the origin. As
a consequence, it can also capture non-autonomous SSMs in weakly damped
physical systems for higher forcing levels at which $x^{*}(t)$ is
already destroyed. This is because the strength of hyperbolicity of
the unforced fixed point at $x=0$ (measured by $\left|\mathrm{Re}\text{\ensuremath{\lambda_{1}}}\right|\ll1$)
is generally much weaker than the strength of hyperbolicity of the
unforced SSM, $\mathcal{W}^{\infty}\left(E\right)$, (measured by
$\frac{\left|\mathrm{Re}\text{\ensuremath{\lambda_{k+1}}}\right|}{\left|\mathrm{Re}\text{\ensuremath{\lambda_{k}}}\right|}>1$)
in weakly damped systems. Such persisting SSMs without a stable hyperbolic
anchor trajectory have been well-documented in equation- and data-driven
studies of time-periodically forced systems. Those SSMs are signaled
by overhangs near resonances in the forced response curves at higher
forcing levels (see, e.g., \citet{jain2022,cenedese22a}).
\end{rem}
As our examples will show, $W(E,t)$ will generally persist even under
$f_{1}(x,t)$ perturbations that are significantly larger than those
allowed by the rather conservative assumptions of Theorem \ref{thm:x^* under conditions}.
In addition, $W(E,t)$ will also be smoother than $C^{\rho}$ under
additional nonresonance conditions. To this end, we will next derive
numerically implementable, recursive approximation formulas that are
valid for $W(E,t)$ as long as it persists.

To state these approximations, we first introduce a small perturbation
parameter $\epsilon\geq0$ with which we rescale the non-autonomous
term in (\eqref{eq: nonlinear system} as 
\begin{equation}
f_{1}(x,t)=\epsilon\tilde{f}_{1}(x,t),\label{eq:rescaling of f_1}
\end{equation}
in order to focus on moderate values of $f_{1}(x,t)$. As a consequence
of this scaling, the expansion (\ref{eq:anchor trajectory expansion})
for the anchor trajectory is also rescaled to
\begin{equation}
x_{\epsilon}^{*}(t)=\sum_{\nu=1}^{N}\epsilon^{\nu}\tilde{x}_{\nu}(t)+o\left(\epsilon^{N}\right),\label{eq:e dependent expansion for anchor}
\end{equation}
given that the form of the coefficients $x_{\nu}(t)$ in the formulas
(\ref{eq:x_nu hyperbolic case}) yields 
\begin{equation}
x_{\nu}(t)=\epsilon\tilde{x}_{\nu}(t).\label{eq:scaled x_nu}
\end{equation}

Second, we let $P=[e_{1},\ldots,e_{n}]\in\mathbb{C}^{n}$ contain
the complex unit eigenvectors corresponding to the ordered eigenvalues
(\ref{eq:eigenvalue ordering}) of $A$. Then, under a coordinate
change $x\mapsto(u,v)\in\mathbb{C}^{d}\times\mathbb{C}^{n-d}$ defined
as
\begin{equation}
\left(\begin{array}{c}
u\\
v
\end{array}\right)=P^{-1}\left(x-x_{\epsilon}^{*}(t)\right),\label{eq:coordinate change to (u,v)}
\end{equation}
we obtain a complex system of ODEs 
\begin{align}
\left(\begin{array}{c}
\dot{u}\\
\dot{v}
\end{array}\right) & =\left(\begin{array}{cc}
A^{u} & 0\\
0 & A^{v}
\end{array}\right)\left(\begin{array}{c}
u\\
v
\end{array}\right)+\hat{f}(u,v,\epsilon;t),\label{eq:(u,v) system-1}
\end{align}
where
\begin{align}
 & A^{u}=\left(\begin{array}{ccc}
\lambda_{1} & 0 & 0\\
0 & \ddots & 0\\
0 & 0 & \lambda_{d}
\end{array}\right),\quad A^{v}=\left(\begin{array}{ccc}
\lambda_{d+1} & 0 & 0\\
0 & \ddots & 0\\
0 & 0 & \lambda_{n}
\end{array}\right),\label{eq:A^u and A^v definition}
\end{align}
and
\begin{align}
\hat{f}(u,v,\epsilon;t) & =P^{-1}\left[f_{0}\left(x_{\epsilon}^{*}(t)+P\left(\begin{array}{c}
u\\
v
\end{array}\right)\right)+Ax_{\epsilon}^{*}(t)-\dot{x}_{\epsilon}^{*}(t)+\epsilon\tilde{f}_{1}\left(x_{\epsilon}^{*}(t)+P\left(\begin{array}{c}
u\\
v
\end{array}\right),t\right)\right].\label{eq:fhat definition-2}
\end{align}
In system (\ref{eq:(u,v) system-1}), the fixed point $(u,v)=(0,0)$
corresponds to the anchor trajectory $x_{\epsilon}^{*}(t)$ and the
$u$ coordinate space is aligned with the spectral subspace $E$.

Third, using the integer multi-index $\left(\mathbf{k},p\right)\in\mathbb{N}^{d}\times\mathbb{N}$
with $\left|\left(\mathbf{k},p\right)\right|\geq0$, we define a (now
$\epsilon$-dependent) order-$\left|\left(\mathbf{k},p\right)\right|$
approximation to $x_{\epsilon}^{*}(t)$ and the evaluation of $\hat{f}(u,v,\epsilon;t)$
on this approximation as 
\begin{equation}
x_{\epsilon}^{*}(t;\mathbf{k},p)=\sum_{\nu=0}^{\left|\mathbf{k}\right|+p-1}\epsilon^{\nu}x_{\nu}(t),\qquad\hat{f}(u,v,\epsilon;t;\mathbf{k},p)=\left(\begin{array}{c}
\hat{f}^{u}(u,v,\epsilon;t;\mathbf{k},p)\\
\hat{f}^{v}(u,v,\epsilon;t;\mathbf{k},p)
\end{array}\right)\in\mathbb{C}^{d}\times\mathbb{C}^{n-d},\label{eq:x_epsilon kp}
\end{equation}
 respectively, where 
\begin{align}
\hat{f}(u,v,\epsilon;t;\mathbf{k},p) & =P^{-1}\left[f_{0}\left(x_{\epsilon}^{*}(t;\mathbf{k},p)+P\left(\begin{array}{c}
u\\
v
\end{array}\right)\right)+Ax_{\epsilon}^{*}(t;\mathbf{k},p)\right.\nonumber \\
 & \,\,\,\,\,\,\,\,\,\,\,\,\,\,\,\,\,\,-\left.\dot{x}_{\epsilon}^{*}(t;\mathbf{k},p)+\epsilon\tilde{f}_{1}\left(x_{\epsilon}^{*}(t;\mathbf{k},p)+P\left(\begin{array}{c}
u\\
v
\end{array}\right),t\right)\right].\label{eq:f hat definition}
\end{align}
In short, $x_{\epsilon}^{*}(t;\mathbf{k},p)$ is an approximation
of $x^{*}(t)$ in the scaled variable $\epsilon x$ up to order $\left|\mathbf{k}\right|+p-1$.
Accordingly, $\hat{f}(u,v,\epsilon;t;\mathbf{k},p)$ is an approximation
of $\hat{f}(u,v,\epsilon;t)$ that uses $x_{\epsilon}^{*}(t;\mathbf{k},p)$
instead of $x^{*}(t)$. 

Finally, for any multi-index $\mathbf{k}\in\mathbb{N}^{d}$, we define
the time-dependent diagonal matrix $G_{\mathbf{k}}(t)\in\mathbb{C}^{\left(n-d\right)\times\left(n-d\right)}$
as
\begin{align}
\left[G_{\mathbf{k}}(t)\right]_{\ell\ell} & :=\left\{ \begin{array}{ccc}
e^{\left[\lambda_{\ell}-\sum_{j=1}^{d}k_{j}\lambda_{j}\right]t}, &  & \,\,\,\,\mathrm{Re}\left[\lambda_{\ell}-\sum_{j=1}^{d}k_{j}\lambda_{j}\right]<0,\\
\\
0, &  & \mathrm{otherwise},
\end{array}\right.\quad t\geq0,\nonumber \\
 & \,\,\,\,\label{eq:G_k definition}\\
\left[G_{\mathbf{k}}(t)\right]_{\ell\ell} & :=\left\{ \begin{array}{ccc}
-e^{\left[\lambda_{\ell}-\sum_{j=1}^{d}k_{j}\lambda_{j}\right]t}, &  & \mathrm{Re}\left[\lambda_{\ell}-\sum_{j=1}^{d}k_{j}\lambda_{j}\right]>0,\\
\\
0, &  & \mathrm{otherwise},
\end{array}\right.\quad t<0,\nonumber 
\end{align}
for $\ell=d+1,\ldots,n.$ Using these quantities, we can state the
following results.
\begin{thm}
\label{thm:computation of non-automous SSM}\textbf{\emph{{[}Computation
of non-autonomous SSMs and their reduced dynamics{]}}} Assume that
after the rescaling (\ref{eq:rescaling of f_1}), an $\epsilon$-dependent
SSM, $W_{\epsilon}(E,t)$, of the type described in Theorem \ref{thm:non-automous SSM}
exists in the coordinates $(u,v)$ for system (\ref{eq:(u,v) system-1})
for all $\epsilon\in\left[0,\epsilon^{*}\right]$. Assume further
that $W_{\epsilon}(E,t)$ is $N$-times continuously differentiable
for any fixed $t$ and the nonresonance conditions 
\begin{equation}
\mathrm{Re}\,\lambda_{j}\neq\sum_{\lambda_{k}\in\mathrm{spect}\left(A\vert_{E}\right)}m_{k}\mathrm{Re}\,\lambda_{k},\qquad\lambda_{j}\in\mathrm{spect}\left(A\right)-\mathrm{spect}\left(A\vert_{E}\right),\label{eq:strict nonresonance}
\end{equation}
hold for all $j=k+1,\ldots,n$ and for all $m_{k}\in\mathbb{N}$ with
\begin{equation}
1\leq\sum_{k=1}^{n}m_{k}\leq N.\label{eq:strenghtened nonresonance under epsilon perturbation}
\end{equation}
. 

Then, for all $\epsilon\in\left[0,\epsilon^{*}\right]$\emph{:}
\end{thm}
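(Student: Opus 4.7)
The plan is to expand the graph of $W_{\epsilon}(E,t)$ as a formal Taylor series in $(u,\epsilon)$ around the anchor trajectory $(u,v)=(0,0)$ and to solve, order by order, the resulting sequence of inhomogeneous linear non-autonomous equations by convolving with the Green's function $G_{\mathbf{k}}(t)$ introduced in (\ref{eq:G_k definition}). Since Theorem \ref{thm:non-automous SSM} already guarantees the existence of a non-autonomous SSM containing the anchor, and since this theorem's hypotheses further ensure $N$-fold differentiability of $W_{\epsilon}(E,t)$, any formally consistent Taylor polynomial of joint degree $\leq N$ in $(u,\epsilon)$ must coincide with the actual Taylor polynomial of the SSM graph at that order.

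First I would write $W_{\epsilon}(E,t)$ in the $(u,v)$ coordinates of (\ref{eq:coordinate change to (u,v)}) as a graph $v=h(u,t,\epsilon)$ with $h(0,t,\epsilon)\equiv 0$ (since the SSM contains the anchor) and $D_{u}h(0,t,0)\equiv 0$ (since $W_{0}(E,t)$ is tangent to $E$ at the anchor). Differentiating the graph relation along trajectories of (\ref{eq:(u,v) system-1}) yields the invariance equation
\[
\partial_{t}h + D_{u}h\left[A^{u}u + \hat f^{u}(u,h,\epsilon;t)\right] = A^{v}h + \hat f^{v}(u,h,\epsilon;t).
\]
Expanding
\[
h(u,t,\epsilon)=\sum_{|\mathbf{k}|+p\geq 2} h_{\mathbf{k},p}(t)\, u^{\mathbf{k}}\epsilon^{p},\qquad \mathbf{k}\in\mathbb{N}^{d},\ p\in\mathbb{N},
\]
and collecting the coefficient of $u^{\mathbf{k}}\epsilon^{p}$ produces, at each multi-order $(\mathbf{k},p)$, a cohomological equation of the form
\[
\dot h_{\mathbf{k},p}(t) = \left[A^{v} - \Bigl(\sum_{j=1}^{d}k_{j}\lambda_{j}\Bigr)I\right] h_{\mathbf{k},p}(t) + R_{\mathbf{k},p}(t),
\]
where the inhomogeneity $R_{\mathbf{k},p}(t)$ is a universal polynomial in the previously computed $h_{\mathbf{k}',p'}$ with $|\mathbf{k}'|+p'<|\mathbf{k}|+p$, in the Taylor coefficients of $f_{0}$ and $\tilde f_{1}$ at the origin, and in the truncated anchor $x_{\epsilon}^{*}(t;\mathbf{k},p)$ together with its derivative introduced in (\ref{eq:x_epsilon kp})-(\ref{eq:f hat definition}).

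Third I would diagonalize the $v$-block: the $\ell$-th scalar line of the cohomological equation carries the spectral factor $\lambda_{\ell} - \sum_{j=1}^{d} k_{j}\lambda_{j}$. By the nonresonance hypothesis (\ref{eq:strict nonresonance})-(\ref{eq:strenghtened nonresonance under epsilon perturbation}), the real part of this factor is nonzero for every $\ell\in\{d+1,\ldots,n\}$ and every multi-order satisfying $|\mathbf{k}|+p\leq N$; the scalar equation therefore enjoys an exponential dichotomy of the type (\ref{eq:dichotomy for autonomous ODE}) and admits a unique uniformly bounded solution given by convolution with $[G_{\mathbf{k}}(t)]_{\ell\ell}$ of (\ref{eq:G_k definition}). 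Stacking the $n-d$ components yields the closed-form recursion
\[
h_{\mathbf{k},p}(t)=\int_{-\infty}^{\infty} G_{\mathbf{k}}(t-\tau)\, R_{\mathbf{k},p}(\tau)\,d\tau,
\]
and substituting this graph into the $u$-equation of (\ref{eq:(u,v) system-1}) produces the reduced dynamics on $W_{\epsilon}(E,t)$ to the corresponding order in $(u,\epsilon)$.

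The main obstacle is the combinatorial bookkeeping of $R_{\mathbf{k},p}(t)$, specifically verifying by induction on $|\mathbf{k}|+p$ that the Taylor coefficient at order $(\mathbf{k},p)$ of the exact $\hat f(u,h(u,t,\epsilon),\epsilon;t)$ from (\ref{eq:fhat definition-2}) agrees with the corresponding coefficient of the truncated object $\hat f(u,h(u,t,\epsilon),\epsilon;t;\mathbf{k},p)$ from (\ref{eq:f hat definition}), so that the recursion depends only on the partial anchor $x_{\epsilon}^{*}(t;\mathbf{k},p)$ already computed from Theorem \ref{thm: expansion for x^*}. Once this identification is established, uniform boundedness of $R_{\mathbf{k},p}(t)$ in $t$, and hence well-definedness of the convolution with $G_{\mathbf{k}}(t)$ above, follows by a straightforward induction from the uniform bounds (\ref{eq:f_1 uniformly bounded}) and (\ref{eq:Hessian of f_1}), the boundedness of $x_{\epsilon}^{*}(t)$ supplied by Theorem \ref{thm:x^* under conditions}, and the already-established boundedness of the lower-order $h_{\mathbf{k}',p'}(t)$, closing the recursion up to order $N$ as required.
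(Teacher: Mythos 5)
Your proposal follows essentially the same route as the paper's proof: writing the SSM as a graph $v=h(u,t,\epsilon)$ with $h(0,t,\epsilon)\equiv 0$ and $D_{u}h(0,t,0)\equiv 0$, deriving the invariance PDE, double-expanding in $(u,\epsilon)$, collecting coefficients of $u^{\mathbf{k}}\epsilon^{p}$ into a recursive family of inhomogeneous linear ODEs with constant coefficient matrix $A_{\mathbf{k}}$, invoking the nonresonance conditions to split $A_{\mathbf{k}}$ into stable and unstable blocks with exponential dichotomy, solving for the unique bounded coefficient via convolution with $G_{\mathbf{k}}$, and obtaining the reduced dynamics by restriction of the $u$-equation. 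The "main obstacle" you flag—consistency of the recursion with the truncated anchor $x_{\epsilon}^{*}(t;\mathbf{k},p)$—is exactly the bookkeeping the paper handles via its definitions (\ref{eq:x_epsilon kp})--(\ref{eq:f hat definition}), so your plan is aligned with the paper's argument.
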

\begin{description}
\item [{\emph{(i)}}] \emph{The SSM $W_{\epsilon}(E,t)$ admits a formal
asymptotic expansion 
\begin{equation}
W_{\epsilon}\left(E,t\right)=\left\{ \left(u,v\right)\in U\subset\mathbb{R}^{n}:\,v=h_{\epsilon}(u,t)=\sum_{\left|\left(\mathbf{k},p\right)\right|\geq1}^{N}h^{\mathbf{k}p}(t)u^{\mathbf{k}}\epsilon^{p}+o\left(\left|u\right|^{q}\epsilon^{N-q}\right)\right\} .\label{eq:h double expansion-2}
\end{equation}
The uniformly bounded $h^{\mathbf{k}p}(t)$ coefficients in this expansion
can be computed recursively from their initial conditions 
\begin{equation}
h^{\mathbf{0}p}(t)\equiv0,\quad t\in\mathbb{R},\quad p\in\mathbb{N};\qquad h^{\mathbf{k}0}(t)\equiv-A_{\mathbf{k}}^{-1}M^{\mathbf{k}0}(h^{\mathbf{j}0}),\quad\left|\mathbf{j}\right|<\left|\mathbf{k}\right|;\quad h^{\mathbf{0}0}=0,\label{eq:oelementary identities about indices-1-1}
\end{equation}
via the formula
\begin{equation}
h^{\mathbf{k}p}(t)=\int_{-\infty}^{\infty}G_{\mathbf{k}}(t-s)M^{\mathbf{k}p}(s,h^{\mathbf{j}m}(s))\,ds,\quad\left|(\mathbf{j},m)\right|<\left|(\mathbf{k},p)\right|,\quad t\in\mathbb{R},\label{eq:h_k formula-1}
\end{equation}
where the functions $M^{\mathbf{k}p}$ are defined as
\[
M^{\mathbf{k}p}(t,h^{\mathbf{j}m})=\frac{\partial^{\left|\left(\mathbf{k},p\right)\right|}}{\partial u_{1}^{k_{1}}\ldots\partial u_{d}^{k_{d}}\partial\epsilon^{p}}\left[\hat{f}^{v}\left(u,\sum_{\left|\left(\mathbf{j},m\right)\right|\geq1}^{\left|\left(\mathbf{k},p\right)\right|-1}h^{\mathbf{j}m}(t)u^{\mathbf{j}}\epsilon^{m},\epsilon;t;\mathbf{k},p\right)\right.
\]
\begin{equation}
-\left.\left.\sum_{\left|\left(\mathbf{j},m\right)\right|\geq1}^{\left|\left(\mathbf{k},p\right)\right|-1}\epsilon^{p}\left(\begin{array}{ccc}
h_{1}^{\mathbf{j}m}(t)\frac{j_{1}u^{\mathbf{j}}}{u_{1}} & \cdots & h_{1}^{\mathbf{j}m}(t)\frac{j_{d}u^{\mathbf{j}}}{u_{d}}\\
\vdots & \ddots & \vdots\\
h_{n-d}^{\mathbf{j}m}(t)\frac{j_{1}u^{\mathbf{j}}}{u_{1}} & \cdots & h_{n-d}^{\mathbf{j}m}(t)\frac{j_{d}u^{\mathbf{j}}}{u_{d}}
\end{array}\right)\hat{f}^{u}\left(u,\sum_{\left|\left(\mathbf{j},m\right)\right|\geq1}^{\left|\left(\mathbf{k},p\right)\right|-1}h^{\mathbf{j}m}(t)u^{\mathbf{j}}\epsilon^{m},\epsilon;t;\mathbf{k},p\right)\right]\right|_{u=0,\,\epsilon=0}.\label{eq:M^k-1-2-1}
\end{equation}
}
\item [{\emph{(ii)}}] \emph{The reduced dynamics on $W_{\epsilon}\left(E,t\right)$
is obtained by restricting the $u$-component of system }(\ref{eq:(u,v) system-1})\emph{
to $W_{\epsilon}\left(E,t\right)$, which yields 
\begin{align}
\dot{u} & =A^{u}u\label{eq:final reduced dynamics-2}\\
 & +Q_{u}\left[f_{0}\left(x_{\epsilon}^{*}(t)+P\left(\begin{array}{c}
u\\
h_{\epsilon}(u,t)
\end{array}\right)\right)+\epsilon\tilde{f}_{1}\left(x_{\epsilon}^{*}(t)+P\left(\begin{array}{c}
u\\
h_{\epsilon}(u,t)
\end{array}\right),t\right)+Ax_{\epsilon}^{*}(t)-\dot{x}_{\epsilon}^{*}(t)\right].\nonumber 
\end{align}
Here $Q_{u}\in\mathbb{C}^{d\times n}$ is a matrix whose $j^{th}$
row is $\hat{e}_{j}/\left(\hat{e}_{j}\cdot e_{j}\right)$ for $j=1,\ldots d$,
where $\hat{e}_{j}$ is the $j^{th}$ unit left eigenvector of $P$
corresponding to its unit right eigenvector $e_{j}$. Equivalently,
$Q_{u}$ is composed of the first $d$ rows of $P^{-1}$. }
\item [{(iii\emph{)}}] \emph{Let $\left(\xi,\eta\right)^{\mathrm{T}}=P^{-1}x$
denote coordinates aligned with the subspace $E$ and the direct sum
of the remaining eigenspaces, respectively, emanating from the original
$x=0$ fixed point of system (\ref{eq: nonlinear system}) for $\epsilon=0.$
In these coordinates, the reduced dynamics (\ref{eq:final reduced dynamics-2})
becomes}
\begin{align}
\dot{\xi} & =A^{u}\xi\label{eq:dfinal reduced dynamics in xi coordinates}\\
 & +Q_{u}\left[f_{0}\left(x_{\epsilon}^{*}(t)+P\left(\begin{array}{c}
\xi-Q_{u}x_{\epsilon}^{*}(t)\\
h_{\epsilon}(\xi-Q_{u}x_{\epsilon}^{*}(t),t)
\end{array}\right)\right)+\epsilon\tilde{f}_{1}\left(x_{\epsilon}^{*}(t)+P\left(\begin{array}{c}
\xi-Q_{u}x_{\epsilon}^{*}(t)\\
h_{\epsilon}(\xi-Q_{u}x_{\epsilon}^{*}(t),t)
\end{array}\right),t\right)\right].\nonumber 
\end{align}
\end{description}
\begin{proof}
See Appendix \ref{subsec:  Proof of approximation of SSM}.
\end{proof}
\begin{rem}
{[}\textbf{Related results}{]} \citet{potzsche06} derive Taylor approximations
for a broad class of invariant manifolds emanating from fixed points
of non-autonomous ODE. Due to the generality of that setting, the
resulting formulas are less explicit than ours, assume global boundedness
on the nonlinear terms, and also assume that the origin remains a
fixed point for all times (and hence disallow additive external forcing).
Nevertheless, under further assumptions and appropriate reformulations,
those formulas should ultimately yield results equivalent to ours
when applied to SSMs. Although it does not directly overlap with our
results here, we also mention related work by \citet{potzsche09}
on the computation of classic and strong stable/unstable manifolds,
as well as center-stable/unstable manifolds, for non-autonomous difference
equations. 
\end{rem}
\begin{rem}
{[}\textbf{Extension to discontinuous forcing}{]} \label{rem:discontonuous forcing}
The derivatives in the definition of (\ref{eq:M^k-1-2-1}) can be
evaluated using the same higher-order chain-rule formula of \citet{constantine96}
that we applied in the proof of Theorem \ref{thm: expansion for x^*}.
Using these formulas reveal that $h^{\mathbf{k}p}(t)$ can be formally
computed as long as $f_{1}$ and its $x$-derivatives at $x=0$ remain
uniformly bounded in $t$. Therefore, the formal expansion we have
derived for the SSM $W_{\epsilon}\left(E,t\right)$ is valid for small
enough $\epsilon$ even if $f_{1}$ is discontinuous in the time $t$.
We illustrate the continued accuracy of these expansions for discontinuous
forcing on an example in Section \ref{subsec:weakly forced cart}.
\end{rem}
\begin{rem}
{[}\textbf{Relation to the case of periodic or quasi-periodic forcing}{]}
\label{rem:periodic/quasiperiodic forcing} The recursively defined
coefficient vectors $M^{\mathbf{k}p}(s,h^{\mathbf{j}m}(s))$ defined
in eq. (\ref{eq:M^k-1-2-1}) are explicitly time dependent but otherwise
satisfy the same formulas as the terms on the right-hand side of the
invariance equations arising from autonomous SSM calculations. Consequently,
the recursive formulas originally implemented in \texttt{SSMTool by}
\citet{jain23} for autonomous SSM calculations apply here without
modification. The difference is that those autonomous $M^{\mathbf{k}}$
coefficients are used in solving linear algebraic systems of equations
for $h^{\mathbf{k}}$, as opposed to the non-autonomous $M^{\mathbf{k}p}$
coefficients are used in linear ODEs for $h^{\mathbf{k}p}(t)$. 
\end{rem}
\begin{rem}
{[}\textbf{Accuracy of asymptotic SSM formulas}{]} \label{rem:accuracy of nonautonomous SSM formulas}
The accuracy of the reduced-order dynamics (\ref{eq:final reduced dynamics-2})
increases with increasing $\left|(\mathbf{k},p)\right|$. There is
no general convergence result under $N\to\infty$ in the reduced dynamics
(\ref{eq:final reduced dynamics-2}), but such convergence is known
if $f(x,t)$ is analytic and has periodic or quasi-periodic time dependence
\citet{cabre03,haro06}). For those types of time dependencies, the
$\mathcal{O}\left(\epsilon\right)$ term in the reduced dynamics (\ref{eq:final reduced dynamics-2})
is the same as that used for time-periodic and time-quasiperiodic
SSMs in earlier publications (see, e.g., \citet{breunung2018,cenedese22b}). 
\end{rem}
By formula (\ref{eq:dfinal reduced dynamics in xi coordinates}),
in the $\xi$ coordinate aligned with $E$ and emanating from $x=0$,
the reduced dynamics up to first order in $\epsilon$ is of the form
\begin{align}
\dot{\xi} & =A^{u}\xi+Q_{u}f_{0}\left(P\left(\begin{array}{c}
\xi\\
h_{0}(\xi)
\end{array}\right)\right)\nonumber \\
 & +\epsilon Q_{u}\left[Df_{0}\left(P\left(\begin{array}{c}
\xi\\
h_{0}(\xi)
\end{array}\right)\right)\left(\int_{-\infty}^{t}e^{A\left(t-\tau\right)}\tilde{f}_{1}(0,\tau)\,d\tau-P\left(\begin{array}{c}
Q_{u}\int_{-\infty}^{t}e^{A\left(t-\tau\right)}\tilde{f}_{1}(0,\tau)\,d\tau\\
\partial_{\epsilon}h_{\epsilon}(\xi,t)\vert_{\epsilon=0}
\end{array}\right)\right)\right]\nonumber \\
 & +\epsilon Q_{u}\tilde{f}_{1}\left(P\left(\begin{array}{c}
\xi\\
h_{0}(\xi)
\end{array}\right),t\right)\nonumber \\
 & +o\left(\epsilon\right),\label{eq:leading-order xi equation}
\end{align}
where we used formulas (\ref{eq:second-order approximation for x^*})
and (\ref{eq:scaled x_nu}) in evaluating $\tilde{x}_{1}(t)$, the
first coefficient in the expansion of $x_{\epsilon}^{*}(t)$ in eq.
(\ref{eq:e dependent expansion for anchor}). 

In prior treatments of time-periodically and time-quasi-periodically
forced SSMs, the second line on the right-hand side of eq. (\ref{eq:leading-order xi equation})
has been justifiably dropped in leading-order approximations, because
the factor $Df_{0}\left(P\left(\begin{array}{c}
\xi\\
h_{0}(\xi)
\end{array}\right)\right)$ is small close to the origin (see, e.g., \citet{breunung2018,ponsioen2020,jain2022,cenedese22a}).
In that case, the leading-order correction to the unforced reduced
dynamics is simply the third line in (\ref{eq:leading-order xi equation}),
which is just the projection of the restriction of the forcing term
to the autonomous SSM onto the spectral subspace $E$. If, in addition,
the forcing term only depends on time (i.e., $\partial_{x}\tilde{f}_{1}\equiv0$),
as is often the case in structural vibrations, then a leading-order
approximation for small-amplitude motions on the SSM \emph{$W_{\epsilon}\left(E,t\right)$}
becomes
\begin{align}
\dot{\xi} & =A^{u}\xi+Q_{u}f_{0}\left(P\left(\begin{array}{c}
\xi\\
h_{0}(\xi)
\end{array}\right)\right)+\epsilon Q_{u}\tilde{f}_{1}\left(t\right)+\mathcal{O}\left(\epsilon\left|\xi\right|,\epsilon^{2}\right).\label{eq:leading-order approximation}
\end{align}

This level of approximation of the reduced dynamics was shown to be
accurate for small $\epsilon$ and $\left|\xi\right|$ for periodic
and quasi-periodic forcing by the references cited above. Equation
(\ref{eq:leading-order approximation}) now establishes the same approximation
formula for general, $x$-independent forcing $f_{1}(t)=\epsilon\tilde{f}_{1}\left(t\right)$
for trajectories that stay close to the origin. Under general forcing,
however, trajectories may well stray away from the origin, given that
$W_{\epsilon}\left(E,t\right)$ itself will generally move substantially
away from the origin. In that case, the next level of approximation
obtained from eq. (\ref{eq:leading-order xi equation}) is
\begin{align}
\dot{\xi} & =A^{u}\xi+Q_{u}f_{0}\left(P\left(\begin{array}{c}
\xi\\
h_{0}(\xi)
\end{array}\right)\right)\nonumber \\
 & +\epsilon Q_{u}\left[Df_{0}\left(P\left(\begin{array}{c}
\xi\\
0
\end{array}\right)\right)\left(\int_{-\infty}^{t}e^{A\left(t-\tau\right)}\tilde{f}_{1}(0,\tau)\,d\tau-P\left(\begin{array}{c}
Q_{u}\int_{-\infty}^{t}e^{A\left(t-\tau\right)}\tilde{f}_{1}(0,\tau)\,d\tau\\
\partial_{\epsilon}h_{\epsilon}(\xi,t)\vert_{\epsilon=0}
\end{array}\right)\right)\right]\nonumber \\
 & +\epsilon Q_{u}\tilde{f}_{1}\left(P\left(\begin{array}{c}
\xi\\
0
\end{array}\right),t\right)\nonumber \\
 & +\mathcal{O}\left(\epsilon\left|\xi\right|^{2},\epsilon^{2}\right).\label{eq:leading-order xi equation-1}
\end{align}
Higher-order approximation can be systematically developed by Taylor-expanding
the reduced dynamics (\ref{eq:dfinal reduced dynamics in xi coordinates})
in $\epsilon$ and utilizing the terms from the expansions for $x_{\epsilon}^{*}(t)$
and $W_{\epsilon}\left(E,t\right)$ using Theorems \ref{thm: expansion for x^*}
and \ref{thm:computation of non-automous SSM}.

Alternative parametrizations of $W_{\epsilon}\left(E,t\right)$ beyond
the graph-style parametrization worked out here in detail are also
possible (see \citet{haller16,haro16,vizzaccaro22}). One option is
the normal-form style parameterization which simultaneously brings
the reduced dynamics on $W_{\epsilon}\left(E,t\right)$ to normal
form. Normal form transformations can also be applied separately once
$W_{\epsilon}\left(E,t\right)$ is located (see, e.g., \citet{cenedese22a}).

\section{Adiabatic SSMs }

\subsection{Setup}

We now consider slowly varying non-autonomous dynamical systems of
the form
\begin{equation}
\dot{x}=f(x,\epsilon t),\quad x\in\mathbb{R}^{n},\quad f\in C^{r},\quad0\leq\epsilon\ll1,\label{eq: nonlinear system-2}
\end{equation}
for some integer $r\geq1$. We obtain such a system, for instance,
when we replace the forcing function $f_{1}(x,t)$ in system (\ref{eq: nonlinear system})
with its slowly varying counterpart, $f_{1}(x,\epsilon t)$, in which
case we have $f(x,\epsilon t)=Ax+f_{0}(x)+f_{1}(x,\epsilon t).$ The
form in (\ref{eq: nonlinear system-2}) is, however, more general
than this specific example.

Introducing the slow variable $\alpha=\epsilon t$, we rewrite system
(\ref{eq: nonlinear system-2}) as
\begin{align}
\dot{x} & =f(x,\alpha),\label{eq:slow-fast system}\\
\dot{\alpha} & =\epsilon.\nonumber 
\end{align}
For $\epsilon=0$, we assume that the $x$-component of system (\ref{eq:slow-fast system})
has an $\alpha$-dependent, uniformly bounded and uniformly hyperbolic
fixed point $x_{0}(\alpha)$, i.e., with the notation
\begin{equation}
A(\alpha)=D_{x}f(x_{0}(\alpha),\alpha)\in\mathbb{R}^{n\times n},\label{eq:A(alpha) definition}
\end{equation}
we have
\begin{equation}
f(x_{0}(\alpha),\alpha)=0,\quad\left|\mathrm{Re}\left[\mathrm{spect}\left(A(\alpha)\right)\right]\right|\geq K>0,\quad\alpha\in\mathbb{R},\label{eq:hyperbolicity assumption for slo-fast}
\end{equation}
for some constant $K>0$. We further assume that $x_{0}(\alpha)$
is a class $C^{r}$ function of $\alpha$, in which case, by the first
equation in (\ref{eq:hyperbolicity assumption for slo-fast}), we
have
\begin{equation}
\frac{d^{p}}{d\alpha^{p}}f(x_{0}(\alpha),\alpha)\equiv0,\quad0\leq p\leq r,\quad\alpha\in\mathbb{R}.\label{eq:alpha derivatives vanish}
\end{equation}

By the hyperbolicity assumption (\ref{eq:hyperbolicity assumption for slo-fast}),
we can order the spectrum of $A(\alpha)$ as 
\[
\mathrm{spect}A(\alpha)=\left\{ \lambda_{1}(\alpha),\ldots,\lambda_{n}(\alpha)\right\} ,
\]
so that it satisfies 

\begin{equation}
\mathrm{Re}\,\lambda_{n}(\alpha)\leq\ldots\leq\mathrm{Re}\,\lambda_{j}(\alpha)\leq-K<0<K\leq\mathrm{Re}\,\lambda_{j-1}(\alpha)\leq\ldots\leq\mathrm{Re}\,\lambda_{1}(\alpha),\qquad\alpha\in\mathbb{R},\label{eq:eigenvalue(alpha) list}
\end{equation}
for some $j\geq1$. Conditions (\ref{eq:hyperbolicity assumption for slo-fast})
imply that for $\epsilon=0$, the slow-fast system (\ref{eq:slow-fast system})
has a one-dimensional, normally attracting, non-compact (but uniformly
bounded) invariant manifold of the form
\begin{equation}
\mathcal{L}_{0}=\left\{ (x,\alpha)\in\mathbb{R}^{n}\times\mathbb{R}\colon\,\,\,\,\,x=x_{0}(\alpha)\right\} .\label{eq:critical manifold}
\end{equation}
The manifold $\mathcal{L}_{0}$ is composed of fixed points of system
(\ref{eq:slow-fast system}) for $\epsilon=0$ and hence is called
a \emph{critical manifold} in the language of geometric singular perturbation
theory (\citet{fenichel73}).

\subsection{Existence and computation of an adiabatic anchor trajectory }

As in the non-autonomous case treated in Section \ref{sec:Non-Autonomous-SSM},
we first discuss the continued existence of an anchor trajectory that
acts as a continuation of the critical manifold $\mathcal{L}_{0}$.
\begin{thm}
\label{thm:adiabatic anchor}\textbf{\emph{{[}Existence and computation
of anchor trajectory for adiabatic SSMs{]}}} For $\epsilon>0$ small
enough, there exists a unique, attracting, one-dimensional slow manifold
$\mathcal{L}_{\epsilon}$, composed of a slow trajectory $x_{\epsilon}(\alpha)$
that is uniformly bounded in $\alpha\in\mathbb{R}$. The trajectory
$x_{\epsilon}(\alpha)$ is $\mathcal{O}\left(\epsilon\right)$ $C^{1}$-close
and $C^{r}$ diffeomorphic to the critical manifold $\mathcal{L}_{0}$
defined in (\ref{eq:critical manifold}). Finally, for any nonnegative
integer $N\leq r$, $x_{\epsilon}(\alpha)$ can be approximated as
\begin{equation}
\mathcal{L}_{\epsilon}=\left\{ (x,\alpha)\in\mathbb{R}^{n}\times\mathbb{R}\colon\,\,\,\,\,x=x_{\epsilon}(\alpha)=\sum_{\nu=0}^{N}\epsilon^{\nu}x_{\nu}(\alpha)+o\left(\epsilon^{N}\right)\right\} ,\label{eq:slow manifold}
\end{equation}
 with the recursively defined coefficients

\begin{equation}
x_{\nu}(\alpha)=A^{-1}(\alpha)\left[x_{\nu-1}^{\prime}(\alpha)-\sum_{1<\left|\mathbf{\boldsymbol{\gamma}}\right|\leq\nu}\frac{\partial^{\left|\mathbf{\mathbf{\boldsymbol{\gamma}}}\right|}f\left(x_{0}(\alpha),\alpha\right)}{\partial x_{1}^{\gamma_{1}}\ldots\partial x_{n}^{\gamma_{n}}}\sum_{s=1}^{\nu}\sum_{p_{s}\left(\nu,\mathbf{\boldsymbol{\gamma}}\right)}\prod_{j=1}^{s}\frac{\prod_{i=1}^{n}\left[x_{\ell_{j}}^{i}(\alpha)\right]^{k_{ji}}}{\prod_{i=1}^{n}k_{ji}!}\right],\quad\nu\geq1,\label{eq:recursive formula for adiabatic anchor}
\end{equation}
where the index set $p_{s}\left(\nu,\mathbf{\mathbf{\boldsymbol{\gamma}}}\right)$
is defined as
\[
p_{s}\left(\nu,\mathbf{\mathbf{\boldsymbol{\gamma}}}\right)=\left\{ \left(\mathbf{k}_{1},\ldots,\mathbf{k}_{s},\ell_{1},\ldots,\ell_{s}\right):\mathbf{k}_{i}\in\mathbb{N}^{n}-\left\{ \mathbf{0}\right\} ,\ell_{i}\in\mathbb{N},0<\ell_{1}<\cdots<\ell_{s},\sum_{i=1}^{s}\mathbf{k}_{i}=\mathbf{\boldsymbol{\gamma}},\sum_{i=1}^{s}\left|\mathbf{k}_{i}\right|\ell_{i}=\nu\right\} .
\]
\end{thm}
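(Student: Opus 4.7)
The plan is to split the theorem into two logically separate parts: (a) an existence/uniqueness/regularity statement that follows from geometric singular perturbation theory for non-compact invariant manifolds, and (b) the explicit asymptotic recursion obtained by substituting the ansatz into the invariance equation and matching powers of $\epsilon$.

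For part (a), I would note that under the hyperbolicity assumption (\ref{eq:hyperbolicity assumption for slo-fast}) and the $C^r$ regularity and uniform boundedness of $x_0(\alpha)$, the critical manifold $\mathcal{L}_0$ is a one-dimensional, uniformly normally hyperbolic invariant manifold of the $\epsilon=0$ extended system (\ref{eq:slow-fast system}). Since $\mathcal{L}_0$ is non-compact, the classical Fenichel theorem does not apply directly; instead I would invoke the non-compact normally hyperbolic invariant manifold theorem of \citet{eldering13}, already cited earlier in the paper. That theorem delivers, for all sufficiently small $\epsilon>0$, a unique $C^r$ locally invariant slow manifold $\mathcal{L}_\epsilon$ of the same dimension as $\mathcal{L}_0$, that is $\mathcal{O}(\epsilon)$ $C^1$-close and $C^r$-diffeomorphic to $\mathcal{L}_0$, and inherits its normal stability type. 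Because $\mathcal{L}_\epsilon$ is one-dimensional and invariant under a flow in which $\alpha$ advances at uniform rate $\epsilon$, it is traced out by a single uniformly bounded trajectory $x_\epsilon(\alpha)$, which is the object to be approximated.

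For part (b), the invariance of $\mathcal{L}_\epsilon$ combined with $\dot\alpha=\epsilon$ yields the reduced ODE
\[
\epsilon\,x_\epsilon'(\alpha) = f\bigl(x_\epsilon(\alpha),\alpha\bigr).
\]
I would plug in the formal ansatz $x_\epsilon(\alpha)=\sum_{\nu=0}^{N}\epsilon^\nu x_\nu(\alpha)+o(\epsilon^N)$, Taylor-expand $f(\cdot,\alpha)$ around $x_0(\alpha)$, and apply the multivariate Fa\`a di Bruno formula of \citet{constantine96}, the same device used in the proof of Theorem \ref{thm: expansion for x^*}, to the composition. Matching the coefficient of $\epsilon^\nu$ on both sides, the $\epsilon^0$ identity is automatic by (\ref{eq:hyperbolicity assumption for slo-fast}), while at order $\nu\geq 1$ the only term on the right-hand side that involves $x_\nu(\alpha)$ is the linear Taylor term $A(\alpha)x_\nu(\alpha)$, corresponding to $|\boldsymbol{\gamma}|=1$, $s=1$, $\ell_1=\nu$. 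Every other summand is constrained by $\sum_i|\mathbf{k}_i|\ell_i=\nu$ together with $|\boldsymbol{\gamma}|=\sum_i|\mathbf{k}_i|\geq 2$, which forces each $\ell_i$ to lie strictly below $\nu$, so these contributions depend only on the already-known coefficients $x_0,\ldots,x_{\nu-1}$. The left-hand side contributes $x_{\nu-1}'(\alpha)$ at order $\epsilon^\nu$. Isolating $A(\alpha)x_\nu(\alpha)$ and inverting $A(\alpha)$, which is uniformly invertible thanks to the lower bound in (\ref{eq:hyperbolicity assumption for slo-fast}), produces exactly (\ref{eq:recursive formula for adiabatic anchor}).

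The main obstacle I anticipate is the non-compactness of $\mathcal{L}_0$: a textbook application of Fenichel theory would only yield a slow manifold over bounded $\alpha$-intervals, whereas the theorem asserts a single globally bounded anchor trajectory for all $\alpha\in\mathbb{R}$. The uniform normal hyperbolicity encoded in the constant $K$, together with \citet{eldering13}'s non-compact NHIM machinery, is precisely what bridges this gap and also supplies the uniform bounds needed to upgrade the formal expansion (\ref{eq:slow manifold}) to a genuine asymptotic expansion for the bounded solution constructed in part (a). The Fa\`a di Bruno bookkeeping that produces (\ref{eq:recursive formula for adiabatic anchor}) is notationally heavy but essentially mechanical, so I would defer the combinatorial verification to the appendix rather than grind through it here.
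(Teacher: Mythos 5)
Your proposal is correct and follows essentially the same route as the paper's Appendix C.1: existence, uniqueness, $C^r$-diffeomorphism, and $\mathcal{O}(\epsilon)$ $C^1$-closeness of $\mathcal{L}_\epsilon$ are obtained from Eldering's non-compact normally hyperbolic invariant manifold theory (the paper likewise notes this is needed because $\mathcal{L}_0$ is unbounded and Fenichel's compact theory does not apply directly), and the recursion (\ref{eq:recursive formula for adiabatic anchor}) is derived by substituting the power-series ansatz into $\epsilon\,x_\epsilon'(\alpha)=f(x_\epsilon(\alpha),\alpha)$, invoking the multivariate Fa\`a di Bruno formula of \citet{constantine96}, isolating the $\lvert\boldsymbol{\gamma}\rvert=1$ term $A(\alpha)x_\nu(\alpha)$, and inverting $A(\alpha)$ using the uniform hyperbolicity bound. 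Your explicit verification that the index constraints $\sum_i\lvert\mathbf{k}_i\rvert\ell_i=\nu$ and $\lvert\boldsymbol{\gamma}\rvert\geq 2$ force every $\ell_i<\nu$ is a nice clarification that the paper leaves implicit.
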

\begin{proof}
See Appendix \ref{subsec:Proof of approximation of slow manifold}.
\end{proof}
Specifically, for $N=2$, formula (\ref{eq:recursive formula for adiabatic anchor})
gives an approximation for the slow anchor trajectory $x_{\epsilon}(\alpha)$
in the form
\begin{align}
x_{\epsilon}(\alpha) & =x_{0}(\alpha)+\epsilon x_{1}(\alpha)+\epsilon^{2}x_{2}(\alpha)+o\left(\epsilon^{2}\right),\nonumber \\
x_{1}(\alpha) & =\left[D_{x}f(x_{0}(\alpha),\alpha)\right]^{-1}x_{0}^{\prime}(\alpha),\nonumber \\
x_{2}(\alpha) & =\left[D_{x}f(x_{0}(\alpha),\alpha)\right]^{-1}\left[x_{1}^{\prime}(\alpha)-\frac{1}{2}D_{x}^{2}f\left(x_{0}(\alpha),\alpha\right)\otimes x_{1}(\alpha)\otimes x_{1}(\alpha)\right].\label{eq:adiabatic anchor up to second order}
\end{align}

\subsection{Existence and computation of an adiabatic SSM}

We seek to construct an \emph{adiabatic SSM} anchored along the slow
invariant manifold $\mathcal{L}_{\epsilon}$ that is spanned by $x_{\epsilon}(\alpha)$
for small $\epsilon>0$. To this end, we assume that the first $k$
eigenvalues $\lambda_{1}(\alpha),\ldots,\lambda_{k}(\alpha)$ in the
list (\ref{eq:eigenvalue(alpha) list}) have negative real parts and
there is a nonzero spectral gap between $\mathrm{Re}\,\lambda_{k}(\alpha)$
and $\mathrm{Re}\,\lambda_{k+1}(\alpha)$:
\begin{equation}
\mathrm{Re}\,\lambda_{n}(\alpha)\leq\ldots\leq\mathrm{Re}\,\lambda_{k+1}(\alpha)<\mathrm{Re}\,\lambda_{k}(\alpha)\leq\ldots\leq\mathrm{Re}\,\lambda_{1}(\alpha)<0,\qquad\alpha\in\mathbb{R}.\label{eq:adiabatic spectrum assumption}
\end{equation}
We also assume that the real spectral subspace $E(\alpha)$ formed
by the corresponding first $k$ eigenvalues varies smoothly in $\alpha$
and hence has a constant dimension 
\[
d=\dim E(\alpha),\quad\alpha\in\mathbb{R}.
\]
We then define the integer part $\rho$ of the spectral gap associated
with $E(\alpha)$ as
\begin{equation}
\rho=\min_{\alpha\in\mathbb{R}}\mathrm{Int}\left[\frac{\mathrm{Re}\,\lambda_{k+1}(\alpha)}{\mathrm{Re}\,\lambda_{k}(\alpha)}\right]\in\mathbb{N}^{+}.\label{eq:rho-normal hyperbolicity-1}
\end{equation}
Note that $\rho\geq1$ holds by assumption (\ref{eq:adiabatic spectrum assumption}),
with $\rho$ measuring the minimum of how many times the attraction
rates toward $E(\alpha)$ overpower the contraction rates within $E(\alpha)$. 

For $\epsilon=0$ and for each $\alpha\in\mathbb{R}$, system (\ref{eq:slow-fast system})
has an SSM $W_{0}\left(E(\alpha)\right)$ tangent to $E(\alpha)$
at $x_{0}(\alpha)$ (see \citet{haller16}). As a consequence, 
\[
\mathcal{M}_{0}=\underset{\alpha\in\mathbb{R}}{\cup}W_{0}\left(E(\alpha)\right)
\]
is a $(d+1)$-dimensional, $\rho$-normally attracting invariant manifold
for system (\ref{eq:slow-fast system}) for $\epsilon=0$ in the sense
of \citet{fenichel71} and \citet{eldering18}. More specifically,
$\mathcal{M}_{0}$ is a $\rho$-normally attracting, non-compact,
inflowing-invariant manifold with a boundary, as shown in Fig. \ref{fig:Geometry of adiabatic SSM}
for $\epsilon=0$ . 
\begin{figure}[H]
\centering{}\includegraphics[width=0.8\textwidth]{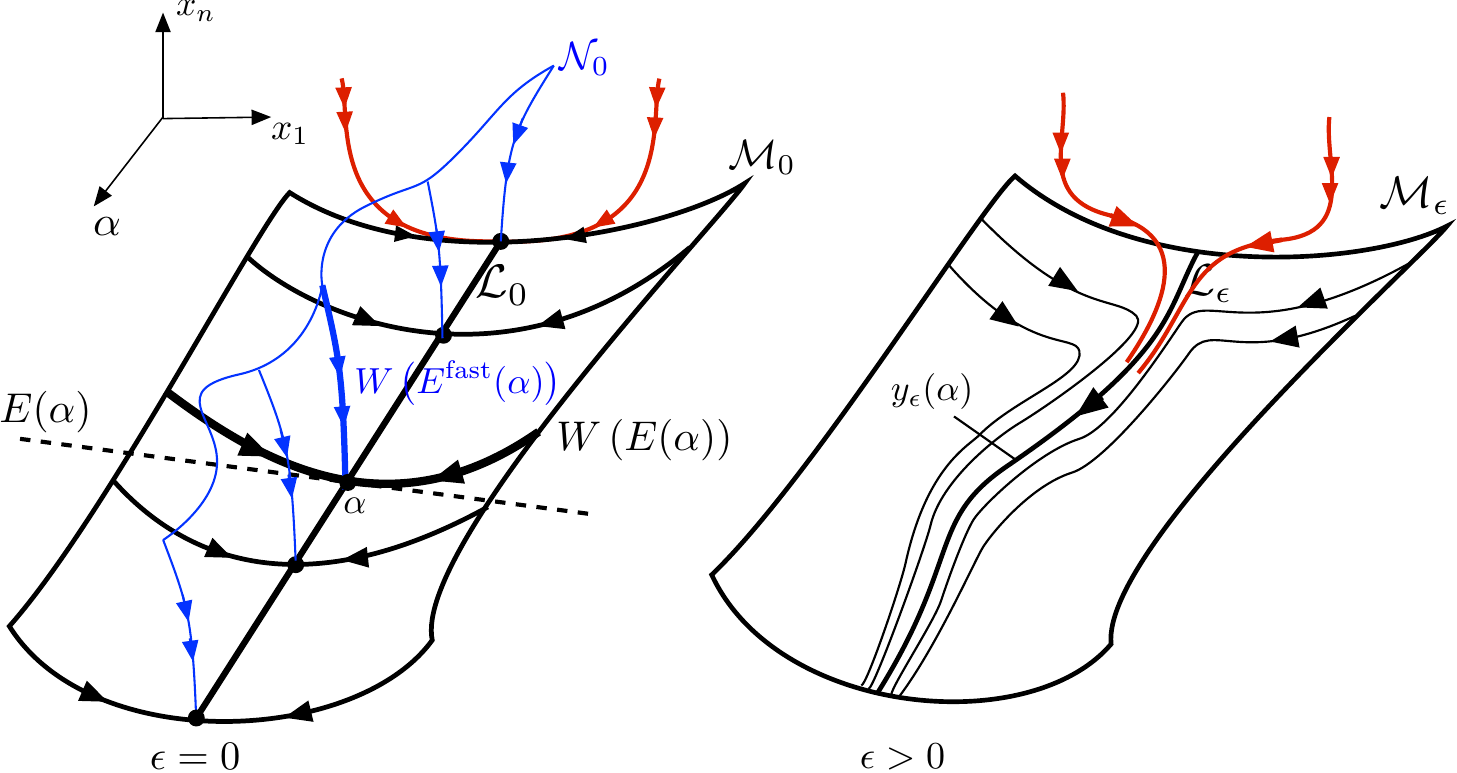}\caption{Left: The geometry of the critical manifold $\mathcal{L}_{0}$ (spanned
by the anchor trajectory $x_{0}(\alpha)$), the attracting invariant
manifold $\mathcal{M}_{0}$, and the fast stable manifold $\mathcal{N}_{0}$
(blue) of $\mathcal{L}_{0}$. Two trajectories off these manifolds
are shown in red. Right: the persistent slow manifold $\mathcal{L}_{\epsilon}$
(spanned by $x_{\epsilon}(\alpha)$) and and the adiabatic SSM $\mathcal{M}_{\epsilon}$
for small enough $\epsilon>0$. \label{fig:Geometry of adiabatic SSM}}
\end{figure}

We have the following main results on the persistence of the manifold
$\mathcal{M}_{0}$ in the form of an adiabatic SSM for small enough
$\epsilon$. 
\begin{thm}
\label{thm:adiabatic SSM}\textbf{\emph{{[}Existence of adiabatic
SSMs{]}}} Assume that $x_{0}(\alpha)$ and $f(x,\alpha)$ have $r$
continuous derivatives that are uniformly bounded in $\alpha\in\mathbb{R}$
in a small, closed neighborhood of $\mathcal{M}_{0}.$ Let $m=\min\left(r,\rho\right).$
Then, for $\epsilon>0$ small enough, there exists a persistent invariant
manifold\emph{ (adiabatic SSM anchored along $\mathcal{L}_{\epsilon}$),
denoted} $\mathcal{M}_{\epsilon}$, that is $C^{m}$ diffeomorphic
to $\mathcal{M}_{0}$, has $m$ uniformly bounded derivatives and
is $\mathcal{O}\left(\varepsilon\right)$ $C^{1}$ -close to $\mathcal{M}_{0}$.
Furthermore, $\mathcal{L}_{\epsilon}\subset\mathcal{M}_{\epsilon}$
holds.
\end{thm}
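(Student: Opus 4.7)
The plan is to invoke the persistence theory of normally hyperbolic invariant manifolds (NHIMs) for non-compact manifolds with bounded geometry, as developed by \citet{eldering13,eldering18}, which extends the classical result of \citet{fenichel71} to the unbounded slow direction $\alpha \in \mathbb{R}$ that appears here. The argument proceeds in three stages.

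First, I would verify that $\mathcal{M}_0$ satisfies the hypotheses of the non-compact NHIM persistence theorem at $\epsilon = 0$: (i) $\mathcal{M}_0$ is a $(d+1)$-dimensional, $C^r$ submanifold of $\mathbb{R}^n \times \mathbb{R}$ of bounded geometry, which follows from the uniform boundedness and $C^r$-regularity of $x_0(\alpha)$, the smooth $\alpha$-dependence of $E(\alpha)$, and the uniform-in-$\alpha$ existence of the autonomous SSMs $W_0(E(\alpha))$ from \citet{haller16}; (ii) $\mathcal{M}_0$ is $\rho$-normally attracting in the sense that the normal linearization admits a uniform exponential dichotomy whose attraction rates are at least $\rho$ times stronger than any tangential rate, which is exactly the content of assumption (\ref{eq:rho-normal hyperbolicity-1}); and (iii) $f$ has $r$ uniformly bounded derivatives in a tubular neighborhood of $\mathcal{M}_0$, which is the standing hypothesis.

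Second, $\mathcal{M}_0$ is fully invariant at $\epsilon = 0$ (since $\dot{\alpha} = 0$), and the perturbation $\dot{\alpha} = \epsilon$ is small and tangent to $\mathcal{M}_0$, so $\mathcal{M}_0$ becomes inflowing-invariant under the perturbed flow. Eldering's theorem then yields a locally unique, persistent invariant manifold $\mathcal{M}_\epsilon$ that is $C^m$-diffeomorphic to $\mathcal{M}_0$ with $m = \min(r,\rho)$, is $\mathcal{O}(\epsilon)$ $C^1$-close to $\mathcal{M}_0$, and has $m$ uniformly bounded derivatives. To conclude $\mathcal{L}_\epsilon \subset \mathcal{M}_\epsilon$, I would use the characterization of $\mathcal{M}_\epsilon$ as the set of trajectories that remain in a fixed tubular neighborhood of $\mathcal{M}_0$ for all backward times: by Theorem \ref{thm:adiabatic anchor}, $\mathcal{L}_\epsilon$ is uniformly bounded and $\mathcal{O}(\epsilon)$-close to $\mathcal{L}_0 \subset \mathcal{M}_0$ for all $t \in \mathbb{R}$, which places it inside $\mathcal{M}_\epsilon$.

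The main obstacle is the verification of bounded geometry and uniform $\rho$-normal attraction of $\mathcal{M}_0$ in the non-compact slow direction, since the compactness assumption of classical Fenichel theory fails here. Concretely, one must assemble the pointwise SSMs $W_0(E(\alpha))$ into a single manifold whose second fundamental form, injectivity radius, and normal dichotomy constants are all uniformly controlled in $\alpha \in \mathbb{R}$. Once these uniform estimates are established --- as they are under the uniform assumptions on $f$, $x_0(\alpha)$, and the spectral gap --- the remainder of the proof reduces to a direct invocation of the non-compact NHIM persistence machinery.
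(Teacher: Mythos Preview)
Your overall strategy---invoke Eldering's non-compact NHIM persistence theory for $\mathcal{M}_0$ and then use a backward-time trapping argument for $\mathcal{L}_\epsilon \subset \mathcal{M}_\epsilon$---is the same as the paper's. However, there is a genuine gap in how you handle the boundary of $\mathcal{M}_0$.

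The manifold $\mathcal{M}_0 = \cup_\alpha W_0(E(\alpha))$ is non-compact in \emph{two} distinct ways: it is unbounded in the slow $\alpha$-direction (which you correctly flag), but it also carries a \emph{boundary} in the $E(\alpha)$-direction, since each local SSM $W_0(E(\alpha))$ is only defined as a graph over a bounded ball in $E(\alpha)$. The persistence results in \citet{eldering13} are stated for boundaryless manifolds of bounded geometry, so they do not apply directly to $\mathcal{M}_0$. Your observation that $\mathcal{M}_0$ is inflowing-invariant is correct but does not by itself furnish a persistence theorem for a manifold with boundary. The paper closes this gap by first invoking the ``wormhole'' construct of Proposition~B1 in \citet{eldering18}, which smoothly extends the inflowing-invariant $\mathcal{M}_0$ across its boundary into a $\rho$-normally attracting, class-$C^r$ manifold $\tilde{\mathcal{M}}_0$ \emph{without} boundary; only then does \citet{eldering13} apply to yield $\mathcal{M}_\epsilon$.

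Accordingly, your identification of the ``main obstacle'' is misplaced: the uniform-in-$\alpha$ bounded-geometry and dichotomy estimates you describe are indeed needed, but they follow rather directly from the standing uniformity hypotheses. The more delicate step, and the one the paper isolates, is the treatment of the SSM boundary via the wormhole extension.
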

\begin{proof}
See Appendix \ref{subsec:  Proof of approximation of SSM}.
\end{proof}
We show the geometry of the adiabatic SSM, $\mathcal{M}_{\epsilon}$,
in Fig. \ref{fig:Geometry of adiabatic SSM} for $\epsilon>0$ .
\begin{rem}
{[}\textbf{Applicability to mixed-mode adiabatic SSMs}{]} \label{rem: Applicability-to-slow-mixed-mode-SSM}For
the purposes of constructing $\mathcal{M}_{\epsilon}$, we had to
assume in eq. (\ref{eq:adiabatic spectrum assumption}) that the spectrum
of $A(\alpha)$ lies in the negative complex half plane, i.e., $E(\alpha)$
contains only stable directions. In the terminology of \citet{haller23},
this means that $W\left(E(\alpha)\right)$ has to be a like-mode SSM
for all values of $\alpha$ for Theorem \ref{thm:adiabatic SSM} to
apply. This enables us to select an inflowing-invariant, normally
attracting manifold $\mathcal{\tilde{M}}_{0}$ in our proof to which
the wormhole construct in Proposition B1 of \citet{eldering18} is
applicable. There is every indication that $\mathcal{M}_{\epsilon}$
also exists under the weaker assumption (\ref{eq:hyperbolicity assumption for slo-fast}),
which only requires $A(\alpha)$ to have no eigenvalues on the imaginary
axes, and hence allows $W\left(E(\alpha)\right)$ to be a mixed-mode
SSM that contains both stable and unstable directions. In this case
however, an $\mathcal{\tilde{M}}_{0}$ with an inflowing or overflowing
boundary cannot be chosen and hence Proposition B1 of \citet{eldering18}
would need a technical extension that we will not pursue here, although
it appears doable. We simply note that the asymptotic formulas we
will derive for $\mathcal{M}_{\epsilon}$ are formally valid under
the weaker assumption (\ref{eq:hyperbolicity assumption for slo-fast})
as well (i.e., for mixed-mode $W\left(E(\alpha)\right)$), as long
as $E(\alpha)$ is normally hyperbolic, i.e., attracts solutions at
a rate that is uniformly stronger than the contraction rates inside
$E(\alpha)$. We will evaluate and confirm these formulas in an example
with a mixed mode adiabatic SSMs in Section \ref{subsec:Slow-forced bumpy rail}.
\end{rem}
As in the general non-autonomous case treated in Section \ref{sec:Non-Autonomous-SSM},
the adiabatic $\mathcal{M}_{\epsilon}$ will generally persist for
larger values of $\epsilon$ and will also be smoother than $C^{m}$
under additional nonresonance conditions. In the following, we will
provide a numerically implementable recursive scheme for computing
$\mathcal{M}_{\epsilon}$, assuming that it exists and is smooth enough.

To state these recursive approximation results, we first introduce
the new coordinates 
\begin{equation}
\left(\begin{array}{c}
u\\
v
\end{array}\right)=P^{-1}\left(\alpha\right)\left(x-x_{\epsilon}(\alpha)\right),\label{eq:u-v transformation,  adiabatic case}
\end{equation}
where $P(\alpha)\in\mathbb{C}^{n}$ diagonalizes the matrix $A(\alpha)$.
The coordinates $\left(u,v\right)\in\mathbb{C}^{d}\times\mathbb{C}^{n-d}$
align with the $d$-dimensional stable spectral subspace family $E(\alpha)$
and with the direct sum of the remaining eigenspaces of $A(\alpha)$,
respectively. In these new coordinates, system (\ref{eq:slow-fast system})
becomes
\begin{align}
\left(\begin{array}{c}
\dot{u}\\
\dot{v}
\end{array}\right) & =\left(\begin{array}{cc}
A^{u}\left(\alpha\right) & 0\\
0 & A^{v}\left(\alpha\right)
\end{array}\right)\left(\begin{array}{c}
u\\
v
\end{array}\right)+\hat{f}(u,v,\epsilon;\alpha),\label{eq:(u,v) system-2}\\
\dot{\alpha} & =\epsilon,\nonumber 
\end{align}
where
\begin{align}
 & A^{u}\left(\alpha\right)=\left(\begin{array}{ccc}
\lambda_{1}\left(\alpha\right) & 0 & 0\\
0 & \ddots & 0\\
0 & 0 & \lambda_{d}\left(\alpha\right)
\end{array}\right),\quad A^{v}\left(\alpha\right)=\left(\begin{array}{ccc}
\lambda_{d+1}\left(\alpha\right) & 0 & 0\\
0 & \ddots & 0\\
0 & 0 & \lambda_{n}\left(\alpha\right)
\end{array}\right),\label{eq:alpa dependent matrices}
\end{align}
and 
\begin{equation}
\hat{f}(u,v,\epsilon;\alpha)=P^{-1}\left(\alpha\right)\left[f\left(x_{\epsilon}(\alpha)+P\left(\alpha\right)\left(\begin{array}{c}
u\\
v
\end{array}\right),\alpha\right)-A\left(\alpha\right)P\left(\alpha\right)\left(\begin{array}{c}
u\\
v
\end{array}\right)-\epsilon x_{\epsilon}^{\prime}(\alpha)-\epsilon P^{\prime}\left(\alpha\right)\left(\begin{array}{c}
u\\
v
\end{array}\right)\right].\label{eq:fhat definition-1}
\end{equation}

By existing SSM theory, any $\alpha=const.$ slice of $\mathcal{M}_{0}$
can be written in the form of a Taylor expansion in $u$ with coefficients
depending smoothly on $\alpha$. Therefore, the whole of $\mathcal{M}_{0}$
can be written as
\begin{equation}
\mathcal{M}_{0}=\cup_{\alpha\in\mathbb{R}}W_{0}\left(E(\alpha)\right)=\left\{ \left(u,v,\alpha\right)\in U\subset\mathbb{R}^{n}:\,v=h_{0}(u,\alpha)=\sum_{\left|\mathbf{k}\right|\geq2}^{r}h^{\mathbf{k}}(\alpha)u^{\mathbf{k}}+o\left(\left|u\right|^{r}\right)\right\} .\label{eq:M_0 expansion}
\end{equation}
We can now state the following results on the computation of the adiabatic
SSM, $\mathcal{M}_{\epsilon}$, for $\epsilon\geq0$. 
\begin{thm}
\label{thm:computation of adiabatic SSM}\textbf{\emph{{[}Computation
of adiabatic SSMs{]}}} Assume that an $\epsilon$-dependent adiabatic
SSM, $\mathcal{M}_{\epsilon}$, of the type described in Theorem \ref{thm:adiabatic SSM}
exists in the transformed system (\ref{eq:(u,v) system-2}) for all
$\epsilon\in\left[0,\epsilon^{*}\right]$ . Assume further that $\mathcal{M}_{\epsilon}$
is $N$-times continuously differentiable and the nonresonance conditions
\begin{equation}
\lambda_{j}(\alpha)\neq\sum_{\lambda_{k}(\alpha)\in\mathrm{spect}\left(A(\alpha)\vert_{E(\alpha)}\right)}m_{k}\lambda_{k}(\alpha),\qquad\lambda_{j}(\alpha)\in\mathrm{spect}\left(A(\alpha)\right)-\mathrm{spect}\left(A(\alpha)\vert_{E}(\alpha)\right),\label{eq:strict nonresonance-1}
\end{equation}
hold for all $j=k+1,\ldots,n$ and for all $m_{k}\in\mathbb{N}$ with
\begin{equation}
1\leq\sum_{k=1}^{n}m_{k}\leq N.\label{eq:lowered lower boudn on summation in nonresonance for adiabatic case}
\end{equation}

Then, for all $\epsilon\in\left[0,\epsilon^{*}\right]$ :
\end{thm}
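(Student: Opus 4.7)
The plan is to seek $\mathcal{M}_{\epsilon}$ as a graph $v=h_{\epsilon}(u,\alpha)$ over the spectral subspace family $E(\alpha)$ in the transformed coordinates of system \eqref{eq:(u,v) system-2}, then expand $h_{\epsilon}$ as a double Taylor series in $u$ and $\epsilon$ with $\alpha$-dependent coefficients and match powers in the invariance condition. Differentiating $v=h_{\epsilon}(u,\alpha)$ along trajectories of \eqref{eq:(u,v) system-2} and using $\dot{\alpha}=\epsilon$ produces the invariance PDE
\begin{equation*}
\partial_{u}h_{\epsilon}(u,\alpha)\bigl[A^{u}(\alpha)u+\hat{f}^{u}(u,h_{\epsilon}(u,\alpha),\epsilon;\alpha)\bigr]+\epsilon\,\partial_{\alpha}h_{\epsilon}(u,\alpha)=A^{v}(\alpha)h_{\epsilon}(u,\alpha)+\hat{f}^{v}(u,h_{\epsilon}(u,\alpha),\epsilon;\alpha),
\end{equation*}
which differs from the autonomous SSM invariance equation only by the slow-drift term $\epsilon\,\partial_{\alpha}h_{\epsilon}$. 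Since $\mathcal{L}_{\epsilon}\subset\mathcal{M}_{\epsilon}$ corresponds to $(u,v)=(0,0)$ after the change of coordinates \eqref{eq:u-v transformation,  adiabatic case}, the graph must satisfy $h_{\epsilon}(0,\alpha)\equiv 0$, which forces $h^{\mathbf{0}p}(\alpha)\equiv 0$ for all $p$ in the ansatz
\begin{equation*}
h_{\epsilon}(u,\alpha)=\sum_{1\leq|(\mathbf{k},p)|\leq N} h^{\mathbf{k}p}(\alpha)\,u^{\mathbf{k}}\epsilon^{p}+o\bigl(|u|^{q}\epsilon^{N-q}\bigr).
\end{equation*}

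The crucial structural observation is that the slow-drift contribution $\epsilon\,\partial_{\alpha}h_{\epsilon}$ contributes $\partial_{\alpha}h^{\mathbf{k},p-1}(\alpha)$ at order $u^{\mathbf{k}}\epsilon^{p}$, hence supplies only strictly lower-order information at each step. Matching the coefficient of $u^{\mathbf{k}}\epsilon^{p}$ in the invariance PDE therefore yields an algebraic (rather than differential) equation in $\alpha$ of the form
\begin{equation*}
A_{\mathbf{k}}(\alpha)\,h^{\mathbf{k}p}(\alpha)=M^{\mathbf{k}p}\bigl(\alpha,\{h^{\mathbf{j}m},\partial_{\alpha}h^{\mathbf{j}m}\}_{|(\mathbf{j},m)|<|(\mathbf{k},p)|}\bigr),
\end{equation*}
where $A_{\mathbf{k}}(\alpha):=A^{v}(\alpha)-\bigl(\sum_{j=1}^{d}k_{j}\lambda_{j}(\alpha)\bigr)I$ is the adiabatic cohomological operator and $M^{\mathbf{k}p}$ is obtained from the Fa\`{a} di Bruno expansion of $\hat{f}^{u}$ and $\hat{f}^{v}$ at $(u,v,\epsilon)=(0,0,0)$ (exactly as in the proofs of Theorems \ref{thm: expansion for x^*} and \ref{thm:computation of non-automous SSM}), evaluated on the already-computed lower-order jet of $h_{\epsilon}$ and augmented by the $\partial_{\alpha}h^{\mathbf{j},p-1}$ term supplied by the slow drift. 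The nonresonance hypothesis \eqref{eq:strict nonresonance-1}--\eqref{eq:lowered lower boudn on summation in nonresonance for adiabatic case} guarantees that $A_{\mathbf{k}}(\alpha)$ is invertible for every $\alpha\in\mathbb{R}$ and every $1\leq|\mathbf{k}|\leq N$, so the formula
\begin{equation*}
h^{\mathbf{k}p}(\alpha)=A_{\mathbf{k}}^{-1}(\alpha)\,M^{\mathbf{k}p}\bigl(\alpha,\{h^{\mathbf{j}m},\partial_{\alpha}h^{\mathbf{j}m}\}_{|(\mathbf{j},m)|<|(\mathbf{k},p)|}\bigr)
\end{equation*}
defines a closed recursion, starting from the known autonomous SSM coefficients $h^{\mathbf{k}0}(\alpha)$ that solve the $\epsilon=0$ invariance equation slice-wise in $\alpha$ (cf. \citet{haller16}).

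The reduced dynamics on $\mathcal{M}_{\epsilon}$ is then obtained by restricting the $u$-equation of \eqref{eq:(u,v) system-2} to the graph $v=h_{\epsilon}(u,\alpha)$ and supplementing it with $\dot{\alpha}=\epsilon$; transforming back to the original coordinates via $x=x_{\epsilon}(\alpha)+P(\alpha)(u,h_{\epsilon}(u,\alpha))^{\mathrm{T}}$ yields an expression entirely analogous to \eqref{eq:dfinal reduced dynamics in xi coordinates} but with $\alpha$-dependent matrices, and with $x_{\epsilon}^{*}(t)$ replaced by $x_{\epsilon}(\alpha)$ from Theorem \ref{thm:adiabatic anchor}. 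The main obstacle I anticipate is the uniform-in-$\alpha$ bookkeeping: each $M^{\mathbf{k}p}$ involves $\alpha$-derivatives of the lower-order $h^{\mathbf{j}m}$, and these must be shown to remain uniformly bounded on $\mathbb{R}$ at each step of the induction on $|(\mathbf{k},p)|$. This control follows from the uniform spectral gap \eqref{eq:adiabatic spectrum assumption} (which yields uniform boundedness of $A_{\mathbf{k}}^{-1}(\alpha)$), from the uniform bounds on $x_{\epsilon}(\alpha)$ and its $\alpha$-derivatives supplied by Theorem \ref{thm:adiabatic anchor}, and from the assumed uniform $\alpha$-smoothness of $f$ and $P(\alpha)$; together with the assumed $N$-times differentiability of $\mathcal{M}_{\epsilon}$, they also justify the remainder estimate $o(|u|^{q}\epsilon^{N-q})$.
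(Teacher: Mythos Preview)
Your proposal is correct and follows essentially the same route as the paper's proof: derive the invariance PDE $\partial_{u}h_{\epsilon}[A^{u}u+\hat{f}^{u}]+\epsilon\,\partial_{\alpha}h_{\epsilon}=A^{v}h_{\epsilon}+\hat{f}^{v}$, substitute the double Taylor ansatz, observe that the slow-drift term $\epsilon\,\partial_{\alpha}h_{\epsilon}$ contributes only the lower-order quantity $[h^{\mathbf{k}(p-1)}]^{\prime}(\alpha)$ at order $u^{\mathbf{k}}\epsilon^{p}$, and hence obtain a purely algebraic recursion solvable via $A_{\mathbf{k}}^{-1}(\alpha)$ under the nonresonance hypothesis. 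The only cosmetic difference is that the paper keeps the derivative term $[h^{\mathbf{k}(p-1)}]^{\prime}(\alpha)$ separate from $M^{\mathbf{k}p}$ in the final formula \eqref{eq:h_=00007Bkp=00007D  adiabatic case}, whereas you absorb it into your $M^{\mathbf{k}p}$; and for the reduced dynamics the paper differentiates the transformation \eqref{eq:u part of variable change for adiabatic case} directly to exhibit the $\epsilon\,Q_{u}^{\prime}(\alpha)P(\alpha)$ correction explicitly, rather than simply restricting the $u$-equation of \eqref{eq:(u,v) system-2}.
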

\begin{description}
\item [{\emph{(i)}}] \emph{The adiabatic SSM, $\mathcal{M}_{\epsilon}$,
admits a formal asymptotic expansion 
\begin{equation}
\mathcal{M}_{\epsilon}=\left\{ \left(u,v,\alpha\right)\in U\subset\mathbb{R}^{n}:\,v=h_{\epsilon}(u,\alpha)=\sum_{\left|\left(\mathbf{k},p\right)\right|\geq1}^{N}h^{\mathbf{k}p}(\alpha)u^{\mathbf{k}}\epsilon^{p}+o\left(\left|u\right|^{q}\epsilon^{N-q}\right)\right\} .\label{eq:h double expansion-1}
\end{equation}
The functions $h^{\mathbf{k}p}(\alpha)$ are uniformly bounded in
$\alpha\in\mathbb{R}$ for all $\left(\mathbf{k},p\right)$ and can
be computed recursively from their initial conditions 
\begin{align}
h^{\mathbf{0}p}(\alpha) & \equiv0,\quad p\geq0;\quad h^{\mathbf{k}0}(\alpha)\equiv h^{\mathbf{k}}(\alpha),\quad\mathbf{k}\in\mathbb{N}^{d};\label{eq:known k^=00007Bkp=00007D coeffs in adiabatic expansion-2}\\
h^{\mathbf{k}0}(\alpha) & \equiv h^{\mathbf{k}0}(\alpha)\equiv h^{\mathbf{k}}(\alpha)=0,\quad\left|\mathbf{k}\right|=1;\quad h^{\mathbf{k}(-1)}(\alpha):=0,\nonumber 
\end{align}
via the formula
\begin{equation}
h^{\mathbf{k}p}(\alpha)=A_{\mathbf{k}}^{-1}(\alpha)\left[\left[h^{\mathbf{k}(p-1)}\right]^{\prime}(\alpha)-M^{\mathbf{k}p}(\alpha,h^{\mathbf{j}m})\right],\quad\left|(\mathbf{j},m)\right|<\left|(\mathbf{k},p)\right|,\label{eq:h_=00007Bkp=00007D  adiabatic case}
\end{equation}
where
\[
A_{\mathbf{k}}(\alpha)=\mathrm{diag}\left[\lambda_{\ell}(\alpha)-\sum_{j=1}^{d}k_{j}\lambda_{j}(\alpha)\right]_{\ell=d+1}^{n}\in\mathbb{C}^{\left(n-d\right)\times\left(n-d\right)},
\]
\[
M^{\mathbf{k}p}(\alpha,h^{\mathbf{j}m})=\frac{\partial^{\left|\left(\mathbf{k},p\right)\right|}}{\partial u_{1}^{k_{1}}\ldots\partial u_{d}^{k_{d}}\partial\epsilon^{p}}\left[\hat{f}^{v}\left(u,\sum_{\left|\left(\mathbf{j},m\right)\right|\geq1}^{\left|\left(\mathbf{k},p\right)\right|-1}h^{\mathbf{j}m}(\alpha)u^{\mathbf{j}}\epsilon^{m},\epsilon;\alpha;\mathbf{k},p\right)\right.
\]
\begin{equation}
-\left.\left.\sum_{\left|\left(\mathbf{j},m\right)\right|\geq1}^{\left|\left(\mathbf{k},p\right)\right|-1}\epsilon^{p}\left(\begin{array}{ccc}
h_{1}^{\mathbf{j}m}(\alpha)\frac{j_{1}u^{\mathbf{j}}}{u_{1}} & \cdots & h_{1}^{\mathbf{j}m}(\alpha)\frac{j_{d}u^{\mathbf{j}}}{u_{d}}\\
\vdots & \ddots & \vdots\\
h_{n-d}^{\mathbf{j}m}(\alpha)\frac{j_{1}u^{\mathbf{j}}}{u_{1}} & \cdots & h_{n-d}^{\mathbf{j}m}(\alpha)\frac{j_{d}u^{\mathbf{j}}}{u_{d}}
\end{array}\right)\hat{f}^{u}\left(u,\sum_{\left|\left(\mathbf{j},m\right)\right|\geq1}^{\left|\left(\mathbf{k},p\right)\right|-1}h^{\mathbf{j}m}(\alpha)u^{\mathbf{j}}\epsilon^{m},\epsilon;\alpha;\mathbf{k},p\right)\right]\right|_{u=0,\,\epsilon=0}.\label{eq:M^k-1-2-1-1-1}
\end{equation}
Specifically, for $\epsilon=0$, the coefficients in the expansion
(\ref{eq:M_0 expansion}) for $\mathcal{M}_{0}$ can be computed as
\begin{equation}
h^{\mathbf{k}}(\alpha)\equiv h^{\mathbf{k}0}(\alpha)=-A_{\mathbf{k}}^{-1}(\alpha)M^{\mathbf{k}}(\alpha,h^{\mathbf{j}0}),\quad\left|\mathbf{j}\right|<\left|\mathbf{k}\right|.\label{eq:coeffs in autonomus limit-1}
\end{equation}
}
\item [{\emph{(ii)}}] \emph{The reduced dynamics on $\mathcal{M}_{\epsilon}$
is given by
\begin{align}
\dot{u} & =Q_{u}\left(\alpha\right)\left[f\left(x_{\epsilon}(\alpha)+P(\alpha)\left(\begin{array}{c}
u\\
h_{\epsilon}(u,\alpha)
\end{array}\right),\alpha\right)-\epsilon x_{\epsilon}^{\prime}(\alpha)\right]+\epsilon Q_{u}^{\prime}\left(\alpha\right)P(\alpha)\left(\begin{array}{c}
u\\
h_{\epsilon}(u,\alpha)
\end{array}\right),\nonumber \\
\dot{\alpha} & =\epsilon.\label{eq:adiabatic restriced dynamics}
\end{align}
 Here $Q_{u}\left(\alpha\right)\in\mathbb{C}^{d\times n}$ is a matrix
whose $j^{th}$ row is $\hat{e}_{j}(\alpha)/\left(\hat{e}_{j}(\alpha)\cdot e_{j}(\alpha)\right)$
for $j=1,\ldots d$, where $\hat{e}_{j}(\alpha)$ is the $j^{th}$
unit left eigenvector of $P(\alpha)$ corresponding to its unit right
eigenvector $e_{j}(\alpha)$. Equivalently, $Q_{u}(\alpha)$ is composed
of the first $d$ rows of $P^{-1}(\alpha)$. }
\end{description}
\begin{proof}
See Appendix \ref{subsec:Computation-of-the-adiabatic-SSM}.
\end{proof}
A leading-order approximation for the reduced dynamics on the adiabatic
SSM, \emph{$\mathcal{M}_{\epsilon}$}, can be obtained from formula
(\ref{eq:adiabatic restriced dynamics}) as
\begin{align}
\dot{u} & =Q_{u}\left(\alpha\right)f\left(x_{0}(\alpha)+P(\alpha)\left(\begin{array}{c}
u\\
h_{0}(u,\alpha)
\end{array}\right),\alpha\right)\nonumber \\
 & +\epsilon Q_{u}\left(\alpha\right)D_{x}f\left(x_{0}(\alpha)+P(\alpha)\left(\begin{array}{c}
u\\
h_{0}(u,\alpha)
\end{array}\right),\alpha\right)\left(\left[D_{x}f(x_{0}(\alpha),\alpha)\right]^{-1}-I\right)x_{0}^{\prime}(\alpha)\nonumber \\
 & +\epsilon Q_{u}\left(\alpha\right)D_{x}f\left(x_{0}(\alpha)+P(\alpha)\left(\begin{array}{c}
u\\
h_{0}(u,\alpha)
\end{array}\right),\alpha\right)P(\alpha)\left(\begin{array}{c}
u\\
\partial_{\epsilon}h_{\epsilon}(u,\alpha)\vert_{\epsilon=0}
\end{array}\right)\nonumber \\
 & +\epsilon Q_{u}^{\prime}\left(\alpha\right)P(\alpha)\left(\begin{array}{c}
u\\
h_{0}(u,\alpha)
\end{array}\right)\nonumber \\
 & +\mathcal{O}\left(\epsilon^{2}\right),\nonumber \\
\dot{\alpha} & =\epsilon,\label{eq:first adiabatic reduced dynamics approximation}
\end{align}
 where we have used the expression for $x_{1}(\alpha)$ from the expansion
(\ref{eq:adiabatic anchor up to second order}). 

\subsection{Special case: Adiabatic SSM under additive slow forcing }

We now assume that the slow time dependence in system \eqref{eq: nonlinear system-2}
arises from slow (but generally not small) additive forcing. This
is a reasonable assumption, for instance, in the setting of external
control forces acting on a highly damped structure, whose unforced
decay rate to an equilibrium is much faster than the rate at which
the forcing changes in time. Another relevant setting is very slow
(quasi-static) forcing in structural dynamics.

In such cases, system \eqref{eq: nonlinear system-2} can be rewritten
\begin{equation}
\dot{x}=f(x,\epsilon t)=f_{0}(x)+f_{1}(\epsilon t),\quad x\in\mathbb{R}^{n},\quad f_{0},f_{1}\in C^{r},\quad0\leq\epsilon\ll1,\label{eq: nonlinear system-2-1}
\end{equation}
 or, equivalently, 
\begin{align}
\dot{x} & =f_{0}(x)+f_{1}(\alpha),\label{eq:slow-fast system-1}\\
\dot{\alpha} & =\epsilon.\nonumber 
\end{align}
 For $\epsilon=0$, the invariant manifold $\mathcal{L}_{0}$ of fixed
points satisfies
\begin{equation}
f_{0}(x_{0}(\alpha))+f_{1}(\alpha)=0,\label{eq:frozen acnhor trajectory under slow forcing}
\end{equation}
and we have 
\begin{equation}
A(x_{0}(\alpha))=D_{x}f(x_{0}(\alpha),\alpha)=D_{x}f_{0}(x_{0}(\alpha))\in\mathbb{R}^{n\times n}.\label{eq:A(alpha) definition-1}
\end{equation}
 Note that we have changed our notation slightly for the matrix $A$
to point out that it depends solely on $x_{0}(\alpha)$.

These simplifications imply that the matrix $A(x_{0}(\alpha))$, the
column matrix $P(x_{0}(\alpha))$ of the right eigenvectors of $A(x_{0}(\alpha))$
and the row matrix $Q_{u}(x_{0}(\alpha))$ of the first $d$, appropriately
normalized left eigenvectors of $A(x_{0}(\alpha))$ can now be determined
solely from the unforced part $f_{0}(x)$ of the right-hand side of
system (\ref{eq: nonlinear system-2-1}) along any parametrized path
$x_{0}(\alpha)$ defined implicitly by eq. (\ref{eq:frozen acnhor trajectory under slow forcing}).
Similarly, an inspection of the formulas (\ref{eq:coeffs in autonomus limit-1})
defining the slow SSM, $\mathcal{M}_{0}$, reveals that the graph
$h_{0}(u,x_{0}(\alpha))$ of $\mathcal{M}_{0}$ can be computed along
the path $x_{0}(\alpha)$ solely based on the knowledge of $f_{0}(x)$;
only the path itself depends on the specific forcing term $f_{1}(\alpha)$.

From eq. (\ref{eq:adiabatic restriced dynamics}), we obtain that
the leading-order reduced dynamics on $\mathcal{M}_{\epsilon}$ are
now
\begin{align}
\dot{u} & =Q_{u}\left(x_{0}(\alpha)\right)f_{0}\left(x_{0}(\alpha)+P(x_{0}(\alpha))\left(\begin{array}{c}
u\\
h_{0}(u,x_{0}(\alpha))
\end{array}\right)\right)+Q_{u}\left(x_{0}(\alpha)\right)f_{1}(\alpha),\label{eq:full leading-order adiabatic model}\\
\dot{\alpha} & =\epsilon.\nonumber 
\end{align}
Along any envisioned path $x_{0}(\alpha)$, one can a priori compute
the parametric family of frozen-time reduced models
\begin{equation}
\dot{u}=Q_{u}\left(p\right)f_{0}\left(p+P(p)\left(\begin{array}{c}
u\\
h_{0}(u,p)
\end{array}\right)\right),\quad p\in U,\label{eq:model family}
\end{equation}
at all points $p$ within  an open set $U$ in the phase space that
is expected to contain $x_{0}(\alpha)$ for possible forcing functions
$f_{1}(\alpha)$ of interest. Then, for any specific forcing $f_{1}(\alpha)$,
we can determine the path $x_{0}(\alpha)$ from eq. (\ref{eq:frozen acnhor trajectory under slow forcing}),
and use the appropriate elements of the model family (\ref{eq:model family})
with $p=x_{0}(\alpha)$ in eq. (\ref{eq:full leading-order adiabatic model})
to obtain the non-autonomous leading order model
\begin{equation}
\dot{u}=Q_{u}\left(x_{0}(\epsilon t)\right)f_{0}\left(x_{0}(\epsilon t)+P(x_{0}(\epsilon t))\left(\begin{array}{c}
u\\
h_{0}(u,x_{0}(\epsilon t))
\end{array}\right)\right)+Q_{u}\left(x_{0}(\epsilon t)\right)f_{1}(\epsilon t).\label{eq:non-autonomous leading-order model}
\end{equation}
In practice, one would only precompute (\ref{eq:model family}) at
a discrete set of points $\left\{ p_{k}\right\} _{k=1}^{K}\subset$
$U$ to obtain a finite model family from (\ref{eq:model family}),
and then interpolate from these points to approximate the full leading-order
model (\ref{eq:full leading-order adiabatic model}) along $x_{0}(\alpha)$.

A specific feature arising in applications to forced mechanical systems
is that the phase space variable $x=(q,v)$ is composed of positions
$q\in\mathbb{R}^{n/2}$ and their corresponding velocities $v=\dot{q}\in\mathbb{R}^{n/2}$,
where $n$ is an even number denoting twice the number of degree of
freedom of the mechanical system. In that case, forcing only appears
in the $v$ component of the ODE (\ref{eq: nonlinear system-2-1}),
implying 
\[
f_{0}(x)=\left(v,f_{0}^{v}(q,v)\right)\in\mathbb{R}^{n/2}\times\mathbb{R}^{n/2},\quad f_{1}(\alpha)=\left(0,f_{1}^{v}(\alpha)\right)\in\mathbb{R}^{n/2}\times\mathbb{R}^{n/2}.
\]
 As a consequence, eq. (\ref{eq:frozen acnhor trajectory under slow forcing})
takes the more specific form 
\begin{align}
v(\alpha) & =0,\label{eq:frozen acnhor trajectory under slow forcing-1}\\
f_{0}^{v}(q(\alpha),v(\alpha))+f_{1}^{v}(\alpha) & =0,\nonumber 
\end{align}
 leading to the single equation 
\[
f_{0}^{v}(q(\alpha),0)+f_{1}^{v}(\alpha)=0,
\]
for the path $x_{0}(\alpha)=(q(\alpha),0).$ This means that the reduced-order
model family (\ref{eq:model family}) can be constructed a priori
along different spatial locations $q$ of the configuration space
under the application of static forces $f_{1}^{v}(\alpha)$ that keep
the mechanical system at equilibrium ($v=0$) at those $q(\alpha)$
locations. This is a significant simplification over the general setting
of eq. (\ref{eq:model family}), because it only requires the construction
of the model family (\ref{eq:model family}) over a codimension-$n$
subset $U_{q}=\left\{ (q,v)\in\mathbb{R}^{n}:\,\,v=0\right\} $ of
the full phase space $\mathbb{R}^{n}$ of system (\ref{eq: nonlinear system-2-1}). 

This simplified reduced model construction is expected to provide
further improvement over current autonomous-SSM-based control strategies
that use only a single member of the model family (\ref{eq:model family}),
computed at fixed point $x_{0}^{*}$ of the unforced system (see \citet{alora23,alora23b}).
This single model is then used along the full path $x_{0}(\alpha)$
to approximate the leading-order model (\ref{eq:full leading-order adiabatic model}).
Clearly, this approximation will generally only be valid when the
instantaneous equilibrium path $x_{0}(\alpha)$ remains close to the
unperturbed fixed point $x_{0}^{*}$. We will explore the application
of the ideas described in this section to robot control in other upcoming
publications.

\section{Examples}

Here we consider two main mechanical examples to illustrate the construction
of anchor trajectories and the corresponding non-autonomous SSMs under
weak or adiabatic forcing. The governing equations of these mechanical
systems are of the general form
\begin{equation}
\mathbf{{M}\ddot{{x}}+}\mathbf{\mathbf{{C}\dot{{x}}}+{K}\mathbf{{x}}+}\mathbf{f}(\mathbf{{x}})=\mathbf{{F}}(t),\label{eq:general mechanical system}
\end{equation}
where $\mathbf{x}$ is the vector of generalized coordinates; $\mathbf{M}=\mathbf{M}^{\mathrm{T}}$
is the positive definite mass matrix; $\mathbf{C}=\mathbf{C}^{\mathrm{T}}$
is the positive definite damping matrix. The stiffness matrix $\mathbf{K}=\mathbf{K}^{\mathrm{T}}$
is positive definite in our first example and indefinite in our second
example; $\mathbf{f}(\mathbf{{x}})$ is the vector of geometric nonlinearities;
 $\mathbf{{F}}(t)$ is the vector of external forcing. We can rewrite
the second-order system (\ref{eq:general mechanical system}) in the
first-order form we have used in this paper by letting
\[
x={\scriptstyle \left(\begin{array}{c}
\mathbf{x}\\
\dot{\mathbf{x}}
\end{array}\right)},\quad A={\scriptstyle \left(\begin{array}{cc}
\mathbf{0} & \mathbf{I}\\
\mathbf{-{M}^{-1}K} & \mathbf{-{M}^{-1}C}
\end{array}\right)},\quad f_{0}(x)={\scriptstyle \left(\begin{array}{c}
\mathbf{0}\\
-\mathbf{{M}}^{-1}\mathbf{f}(\mathbf{x})
\end{array}\right)},\quad f_{1}(t)={\scriptstyle \left(\begin{array}{c}
\mathbf{0}\\
-\mathbf{{M}}^{-1}\mathbf{{F}}(t)
\end{array}\right)}.
\]
With this notation, system (\ref{eq:general mechanical system}) will
take the form of the first-order system (\ref{eq: nonlinear system})
or that of (\ref{eq: nonlinear system-2-1}), depending on whether
$\mathbf{{F}}(t)$ can be considered small or slow.

Both of our examples treated below admit a hyperbolic fixed point
at $x=0$ in the absence of forcing. To add general aperiodic forcing
to these systems, we will construct the vector $\mathbf{{F}}(t)$
from a trajectory on the chaotic attractor of the classic Lorenz system,
\begin{align}
\dot{x} & =\sigma(y-x),\nonumber \\
\dot{y} & =x(\rho-z)-y,\nonumber \\
\dot{z} & =xy-\beta z,\label{eq:lorenz}
\end{align}
with parameter values $\ensuremath{\rho=28},\ensuremath{\sigma=10},$
and $\beta=\frac{8}{3}$ . We solve this system over the time interval
$[0,500]$, starting from the initial condition $(x_{0},y_{0,}z_{0})=\left(0,0.3,0.5\right)$
to generate weak forcing and over the time interval $[-15,20]$ from
$(x_{0},y_{0,}z_{0})=\left(0.8,0.3,0.2\right)$ to generate slow forcing.
Specifically, we use the $x(t)$ component of this solution as forcing
profile after appropriate scaling in both cases. Outside the time
interval $[0,500]$, we select the forcing to be identically zero
for the weak forcing case. To emulate slow forcing, we use a slowed-down
version of the chaotic signal by rescaling time as $t\mapsto\alpha=\epsilon t$.
These choices of the forcing ensure its uniform boundedness, which
was one of our fundamental assumptions in deriving our results for
weak forcing. In the slow forcing case, the signal is smooth enough
to ensure accurate numerical differentiability in the $\alpha$-interval
$[0,6]$ we consider.

We formulated our results in the previous sections for fully non-dimensionalized
systems for which smallness or slowness can simply be imposed by selecting
a single perturbation parameter $\epsilon$ small enough. In specific
physical examples, such a non-dimensionalization can be done in multiple
ways and is often cumbersome to carry out for large systems. One nevertheless
needs to assess whether the external forcing is \emph{de facto} smaller
or slower than the magnitude or the speed of variation of internal
forces along trajectories. While such a consideration has been largely
ignored in applications of perturbation results to physical problems,
here we will propose and apply heuristic physical measures that help
assessing the magnitude and the slowness of the perturbation. 

To assess smallness or slowness in a systematic and non-dimensional
fashion without non-dimensionalization of the full mechanical system,
we first express the mechanical system in the form 
\begin{equation}
\mathbf{M}\ddot{\mathbf{x}}=\mathbf{F}_{int}\left(\mathbf{x},\dot{\mathbf{x}}\right)+\mathbf{F}_{ext}\left(\mathbf{x},\dot{\mathbf{x}},t\right),\label{eq:general forced equation of motion}
\end{equation}
with the subscripts referring to the autonomous internal forces and
non-autonomous external forces, respectively. Using these forces,
we define the non-dimensional \emph{forcing weakness} $r_{w}$ and
\emph{forcing speed} $r_{s}$ as

\begin{equation}
r_{w}=\frac{\overline{\int_{t_{0}}^{t_{f}}\left|\mathbf{F}_{ext}\left(\mathbf{x},\dot{\mathbf{x}},t\right)\right|dt}}{\overline{\int_{t_{0}}^{t_{f}}\left|\mathbf{F}_{int}\left(\mathbf{x},\dot{\mathbf{x}}\right)\right||dt}},\qquad r_{s}=\frac{\overline{\int_{t_{0}}^{t_{f}}\left|\frac{\partial}{\partial t}\mathbf{F}_{ext}\left(\mathbf{x},\dot{\mathbf{x}},t\right)\right|dt}}{\overline{\int_{t_{0}}^{t_{f}}\left|\frac{d}{dt}\mathbf{F}_{int}\left(\mathbf{x},\dot{\mathbf{x}}\right)\right||dt}},\label{eq:weakness and slowness measures}
\end{equation}
where the over-bar represents averaging with respect to the initial
conditions $\left(\mathbf{x}_{0},\dot{\mathbf{x}}_{0}\right)$ of
the unperturbed (i.e. $\epsilon=0)$ trajectories $\left(\mathbf{x}(t),\dot{\mathbf{x}}(t)\right)$
of the system over an open domain of interest in the phase space. 

In a practical setting, we deem a specific external forcing weak if
$r_{w}\ll1$ or slow if $r_{s}\ll1$. This is the range in which the
perturbation-based invariant manifold techniques used in proving our
relevant theorems can reasonably expected to apply. We have noted,
however, that the SSMs obtained from these techniques are normally
hyperbolic and hence persist under further increases in $r_{w}$ and
$r_{s}$. Indeed, we will show that our formal asymptotic SSM expansions
tend to remain valid for $r_{w}>1$ and and $r_{s}>1$. In some cases,
we also find these formulas to have predictive power for $r_{w},r_{s}\gg1$. 

All MATLAB scripts used in producing the results for the examples
below can be publicly accessed at \citet{kaundinya23}.

\subsection{Example 1 : Chaotically forced cart system\label{subsec:Cart example}}

\subsubsection{Weak forcing\label{subsec:weakly forced cart}}

We consider the two-degree-of-freedom (2 DOF) mechanical system from
\citet{haller16}, placed now on a moving cart subject to external
chaotic shaking, as shown in Fig. \ref{Figure:Shaking_cart}. In this
example, the fixed point of the unforced system is asymptotically
stable. As a result, under external forcing, the SSM attached to a
unique nearby anchor trajectory will be of like-mode type.

Using the coordinate vector $\mathbf{x}=\left(q_{1},q_{2},x_{c}\right)$,
we obtain the equations of motion in the form (\ref{eq:general mechanical system})
with 

\begin{gather*}
{\scriptstyle {\displaystyle \mathbf{M}}=\left(\begin{array}{ccc}
M_{f}\frac{(m_{1}+m_{2})}{M_{T}} & \frac{m_{2}M_{f}}{M_{T}} & 0\\
\frac{m_{2}M_{f}}{M_{T}} & m_{2}\frac{(m_{1}+M_{f})}{M_{T}} & 0\\
0 & 0 & M_{T}
\end{array}\right),\quad{\displaystyle \mathbf{K}}=\left(\begin{array}{ccc}
2k+\frac{k_{f}(m_{1}+m_{2})^{2}}{M_{T}^{2}} & k+\frac{k_{f}m_{2}(m_{1}+m_{2})}{M_{T}^{2}} & -k_{f}\frac{m_{1}+m_{2}}{M_{T}}\\
k+\frac{k_{f}m_{2}(m_{1}+m_{2})}{M_{T}^{2}} & 2k+\frac{k_{f}m_{2}^{2}}{M_{T}^{2}} & -k_{f}\frac{m_{2}}{M_{T}}\\
-k_{f}\frac{m_{1}+m_{2}}{M_{T}} & -k_{f}\frac{m_{2}}{M_{T}} & k_{f}
\end{array}\right)},
\end{gather*}

\begin{gather}
{\scriptstyle {\displaystyle \mathbf{C}}=\left(\begin{array}{ccc}
c+\frac{c_{f}(m_{1}+m_{2})^{2}}{M_{T}^{2}} & c+\frac{c_{f}m_{2}(m_{1}+m_{2})}{M_{T}^{2}} & -c_{f}\frac{m_{1}+m_{2}}{M_{T}}\\
c+\frac{c_{f}m_{2}(m_{1}+m_{2})}{M_{T}^{2}} & 2c+\frac{c_{f}m_{2}^{2}}{M_{T}^{2}} & -c_{f}\frac{m_{2}}{M_{T}}\\
-c_{f}\frac{m_{1}+m_{2}}{M_{T}} & -c_{f}\frac{m_{2}}{M_{T}} & c_{f}
\end{array}\right),\quad{\displaystyle \mathbf{f}\left(\mathbf{x},\dot{\mathbf{x}}\right)}=\left(\begin{array}{c}
\gamma q_{1}^{3}\\
0\\
0
\end{array}\right),\quad{\displaystyle \mathbf{F}(t)}=\left(\begin{array}{c}
0\\
0\\
M_{T}g(t)
\end{array}\right)},\nonumber \\
\label{eq:data for weakly forced cart}
\end{gather}
where $M_{T}=M_{f}+m_{1}+m_{2}$. We further set $m_{1}=m_{2}=1\,\left[\mathrm{kg}\right]$,
$M_{f}=4\,\left[\mathrm{kg}\right]$, $k=k_{f}=1\,\mathrm{\left[N/m\right]}$
and $c=c_{f}=0.3\,\mathrm{\left[Nm/s\right]}$ and $\gamma=0.5\,\left[\mathrm{N/m^{3}}\right]$. 

We will consider two different cases of forcing by scaling the magnitude
of the chaotic signal $g(t)$ in two different ways. In the first
case, we will have $\max\left|\mathbf{F}\left(t\right)\right|=0.06\,\mathrm{\left[N\right]}$,
which gives the non-dimensional forcing weakness $r_{w}=0.16$ from
the first formula in eq. (\ref{eq:weakness and slowness measures}).
This forcing scheme can thus be considered weak albeit not very small.
Our subsequent choice of $\max\left|\mathbf{F}\left(t\right)\right|=3\,\mathrm{\left[N\right]}$
gives $r_{w}=7.8$, which is definitely outside the range of small
forcing.
\begin{figure}[H]
\subfloat[]{\includegraphics[width=0.6\columnwidth]{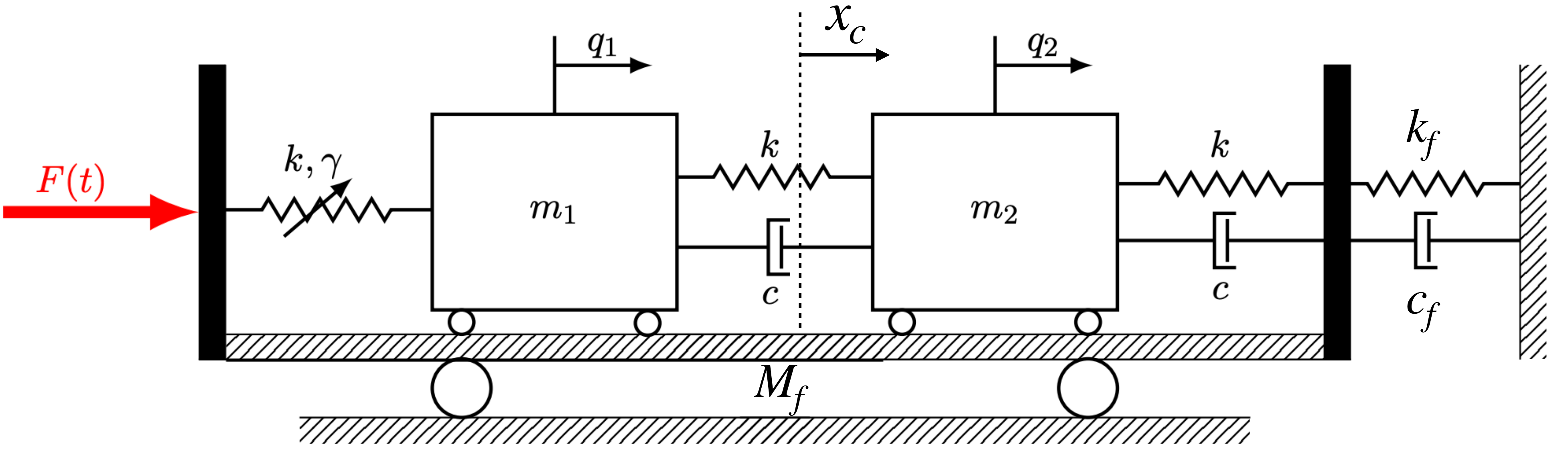}

\label{Figure:Shaking_cart}}\subfloat[]{\includegraphics[width=0.4\columnwidth]{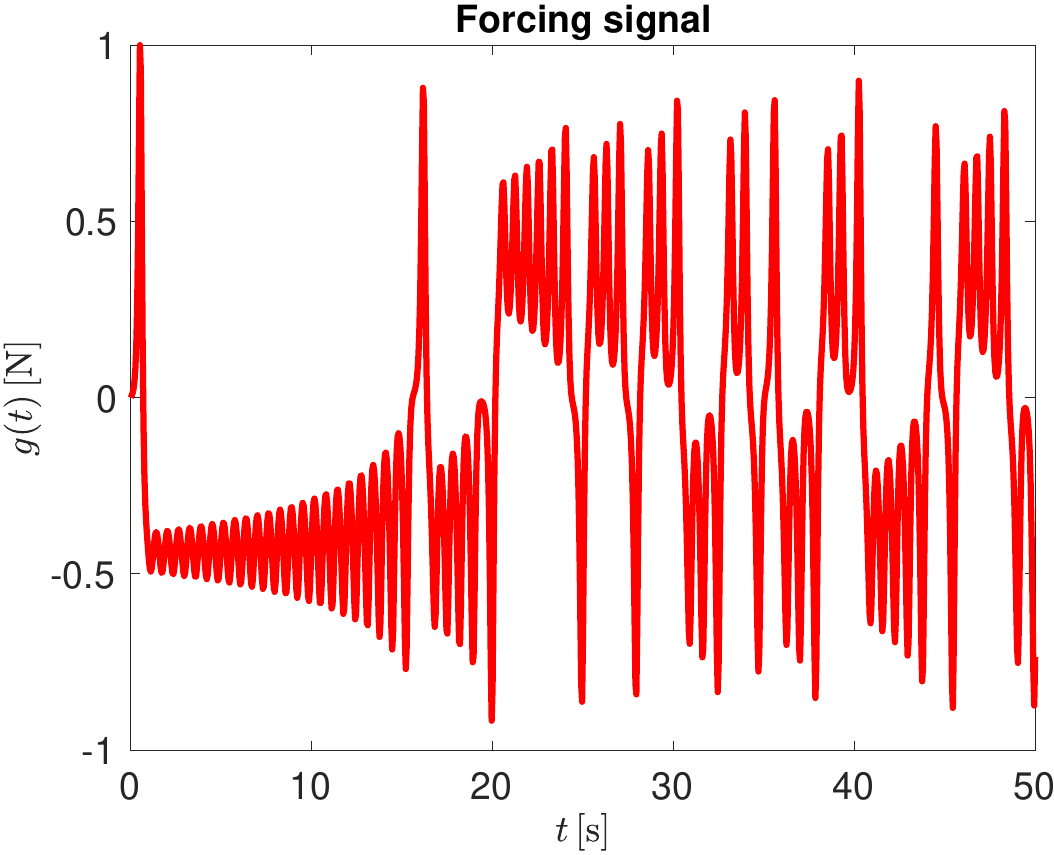}\label{fig:force_weak}}\caption{(a) The physical setup for the forced cart problem. (b) The chaotic
forcing signal $g(t)$, generated as the $x(t)$ component of the
numerically solved Lorenz system \eqref{eq:lorenz} with initial condition
$(0,0.3,0.5)$, computed over the time interval $[0,500]$ measured
in seconds. For times below $t<0$, we set $g(t)\equiv0.$}
\end{figure}

As the unforced \textbf{$\mathbf{x}=\mathbf{0}$} fixed point is asymptotically
stable and the forcing in uniformly bounded, this fixed point perturbs
for small enough $\left|M_{T}g(t)\right|$ into a nearby attracting
anchor trajectory $x^{*}(t)$ by Theorem \eqref{thm:x^* under conditions}.
From the asymptotic formula (\ref{eq:anchor trajectory expansion})
listed in the theorem, a $5^{th}$-order approximation for $x^{*}(t)$
has the terms

\begin{gather*}
x_{1}(t)=\int_{-\infty}^{t}e^{A(t-\tau)}f_{1}(\tau)d\tau,\qquad x_{2}(t)=0,\qquad x_{3}(t)=\int_{-\infty}^{t}e^{A(t-\tau)}\left(\begin{array}{c}
0\\
0\\
0\\
-\frac{\gamma(M_{f}+m_{1})}{m_{1}M_{f}}[x_{1}^{1}(\tau)]^{3}\\
\frac{\gamma}{m_{1}}[x_{1}^{1}(\tau)]^{3}\\
0
\end{array}\right)d\tau,
\end{gather*}
\begin{gather}
x_{4}(t)=0,\hfill x_{5}(t)=\int_{-\infty}^{t}e^{A(t-\tau)}\left(\begin{array}{c}
0\\
0\\
0\\
-\frac{\gamma(M_{f}+m_{1})}{m_{1}M_{f}}3[x_{1}^{1}(\tau)]^{2}x_{3}^{1}(\tau)\\
\frac{\gamma}{m_{1}}3[x_{1}^{1}(\tau)]^{2}x_{3}^{1}(\tau)\\
0
\end{array}\right)d\tau,\label{eq:anchor of shaken cart up to order 5}
\end{gather}
 with the external forcing $f_{1}(t)=\left(0,0,0,0,0,g(t)\right)^{\mathrm{T}}$
appearing in the first-order version of the equations motion.

Utilizing that the forcing signal $g(t)$ vanishes for $t<0$, we
evaluate the integrals in (\ref{eq:anchor of shaken cart up to order 5})
via the trapezoidal integration scheme of MATLAB. For sufficient accuracy,
we use forcing values evaluated at $10^{4}$ equally spaced times
to ensure accurate numerical integration. The terms $x_{\nu}(t)$
in the expansion for $x^{*}(t)$ are then spline-interpolated in time
to obtain an expression for $x^{*}(t)$ for arbitrary $t\in\left[0,500\right]$. 

To compute the non-autonomous SSM coefficients, we change coordinates
such that $x^{*}(t)$ serves as the origin of the transformed system
(\ref{eq:(u,v) system-1}). As a result, the matrices defined in eq.
(\ref{eq:A^u and A^v definition}) are of the form

\[
A^{u}=\left(\begin{array}{cc}
\lambda_{1} & 0\\
0 & \lambda_{2}
\end{array}\right),\quad A^{v}=\left(\begin{array}{cccc}
\lambda_{3} & 0 & 0 & 0\\
0 & \lambda_{4} & 0 & 0\\
0 & 0 & \lambda_{5} & 0\\
0 & 0 & 0 & \lambda_{6}
\end{array}\right),
\]
with $\lambda_{1,2}=-0.0227\pm0.3956i$ and $\lambda_{3,4}=-0.1234\pm1.2700i$,
$\lambda_{5,6}=-0.3788\pm1.6707i$. These eigenvalues satisfy the
non-resonance condition (\ref{eq:strict nonresonance}), and hence
by Theorem \ref{thm:computation of non-automous SSM}, there exists
a 2D autonomous SSM, \emph{$W_{\epsilon}(E,t)$, }that admits a truncated
Taylor expansion

\begin{equation}
v_{i}\approx\sum_{j,\mathbf{{m}=1}}^{j+|\mathbf{m}|=N}\epsilon^{j}u_{1}^{m_{1}}u_{2}^{m_{2}}h_{i}^{j\mathbf{{m}}}(t)\label{eq:v_i for shaken cart}
\end{equation}
for any $N\geq2$ in system (\ref{eq:(u,v) system-1}). Here we choose
the dimensional book-keeping parameter 
\[
\epsilon=\text{\ensuremath{\frac{\text{\ensuremath{\max\left|\mathbf{F}\left(t\right)\right|}}}{M_{f}+m_{1}+m_{2}}}}
\]
in computing the expansion (\ref{eq:v_i for shaken cart}). As already
noted, the actual ratio between external forces to internal forces
along trajectories is better represented by the dimensionless forcing
weakness parameter $r_{w}$.

Up to the order of truncation in eq. (\ref{eq:v_i for shaken cart}),
the two-dimensional SSM-reduced order model (\ref{eq:final reduced dynamics-2})
can be written in the complex coordinate $u_{1}$ as 

\begin{equation}
\dot{u}_{1}=\lambda_{1}u_{1}+f_{u_{1}}\left(u,\sum_{j,\mathbf{{m}=1}}^{j+|m|=5}\epsilon^{j}u_{1}^{m_{1}}u_{2}^{m_{2}}\mathbf{{h}}^{j\mathbf{{m}}}(t),x_{\epsilon}(t))\right).\label{eq:reduced dynamics weakly shaken cart}
\end{equation}
After computing the trajectory from formulas (\ref{eq:anchor of shaken cart up to order 5})
up to fifth order, we use the recursive formulas in statement (i)
of Theorem \ref{thm:computation of non-automous SSM} to compute the
coefficients $\mathbf{{h}}^{j\mathbf{{m}}}(t)$ in eq. (\ref{eq:reduced dynamics weakly shaken cart})
up to the same order recursively. Details for the coefficients in
the SSM-reduced model (\ref{eq:reduced dynamics weakly shaken cart})
can be found in the code available from \citet{kaundinya23}. 

To assess the performance of the truncated SSM-reduced model (\ref{eq:reduced dynamics weakly shaken cart})
for $N=5$, we first plot the normalized mean trajectory error computed
from 10 different initial conditions for $\text{{max}}||F||=0.06\,\left[N\right]$
in Fig. \ref{Figure:NMTE_REL}a. We compute the dependence of this
error on $\text{{max}}||F||$ in Fig. \ref{Figure:NMTE_REL}b. As
expected, the errors grow with $\text{{max}}||F||$ but remains an
order of magnitude less than $\text{{max}}||F||$.

\begin{figure}[H]
\subfloat[]{\includegraphics[width=0.52\textwidth]{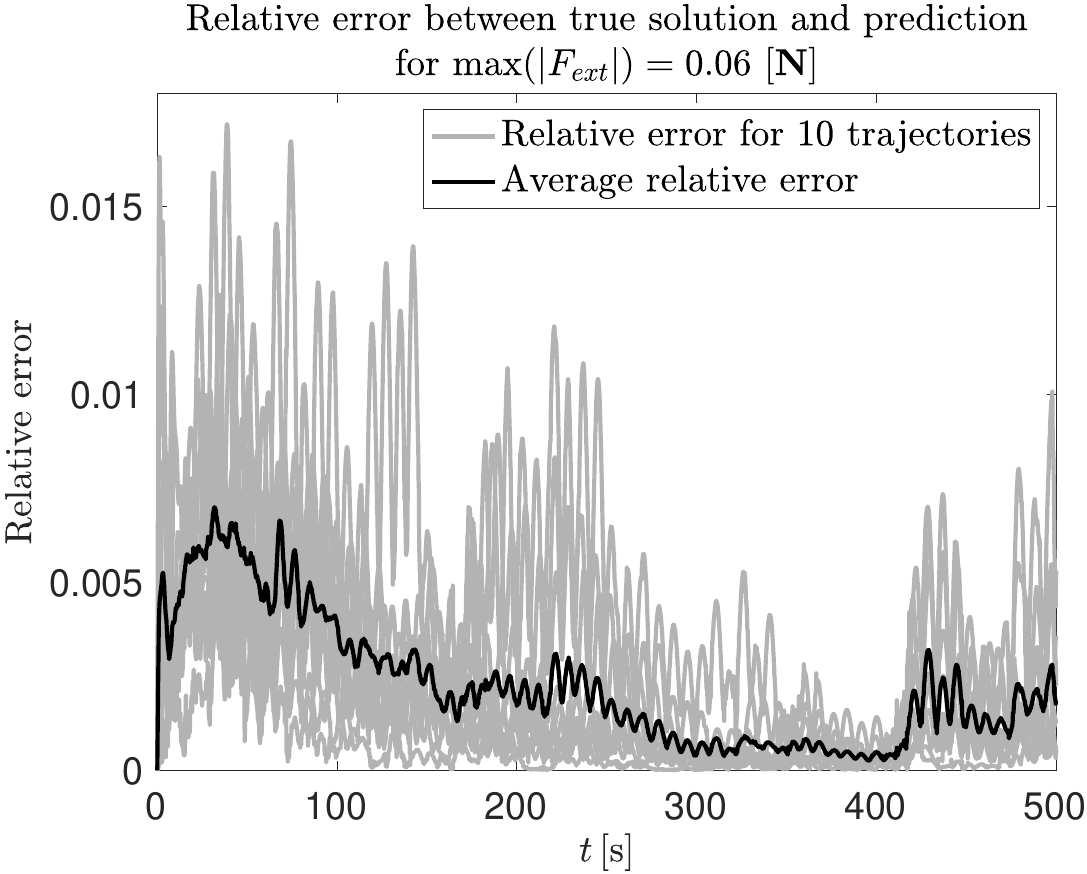}

}\subfloat[]{\includegraphics[width=0.48\textwidth]{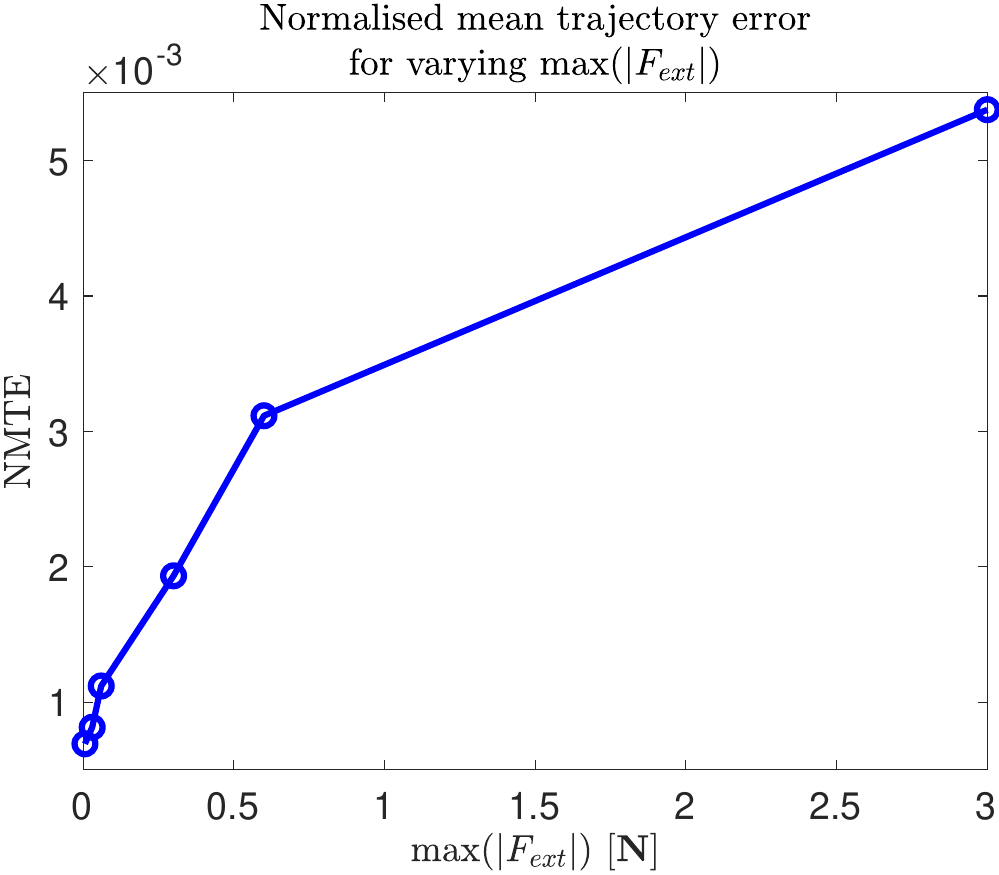}\label{Figure:NMTE_REL}}\caption{Error distribution of the SSM-reduced modeling error for the chaotically
shaken cart example. (a) Ensemble average (black) of the mean trajectory
errors (grey, normalized by the maximum of the maximal trajectory
amplitude) computed from 10 initial conditions under forcing with
$\text{{max}}||F||=0.06$ $\left[N\right]$. (b) The mean trajectory
error under varying $\text{{max}}||F||$ on trajectories starting
from the initial condition $u_{1}=u_{2}=1.2$ in all cases.}
\end{figure}

Probing larger forcing amplitudes, we still obtain accurate reduced-order
models up to $\text{{max}}||F||=3\,\left[N\right]$. As an illustration,
Fig. \ref{fig:xc_weak} uses the center-of-mass coordinate $x_{c}$
to show our $5^{th}$ order asymptotic approximation using Theorem
\ref{thm:x^* under conditions} for the anchor trajectory of the chaotic
SSM (blue), a simulation from the full model for a trajectory in the
2D SSM converging to the anchor point (black), and a prediction from
the 2D SSM-reduced model obtained from Theorem \ref{thm:computation of non-automous SSM}
for the same trajectory (red). 

In Fig. \ref{fig:Snap_Weak} (Multimedia available online), we also
plot snapshots of the chaotic 2D SSM, its anchor trajectory, a trajectory
from the full order model and the prediction for this trajectory from
the SSM-reduced order model at three different times. Note how both
the full trajectory and the predicted model trajectory synchronize
with the anchor trajectory of the SSM (see Figure \ref{fig:Snap_Weak}). 

\begin{figure}[t]
\centering{}\includegraphics[width=0.55\columnwidth]{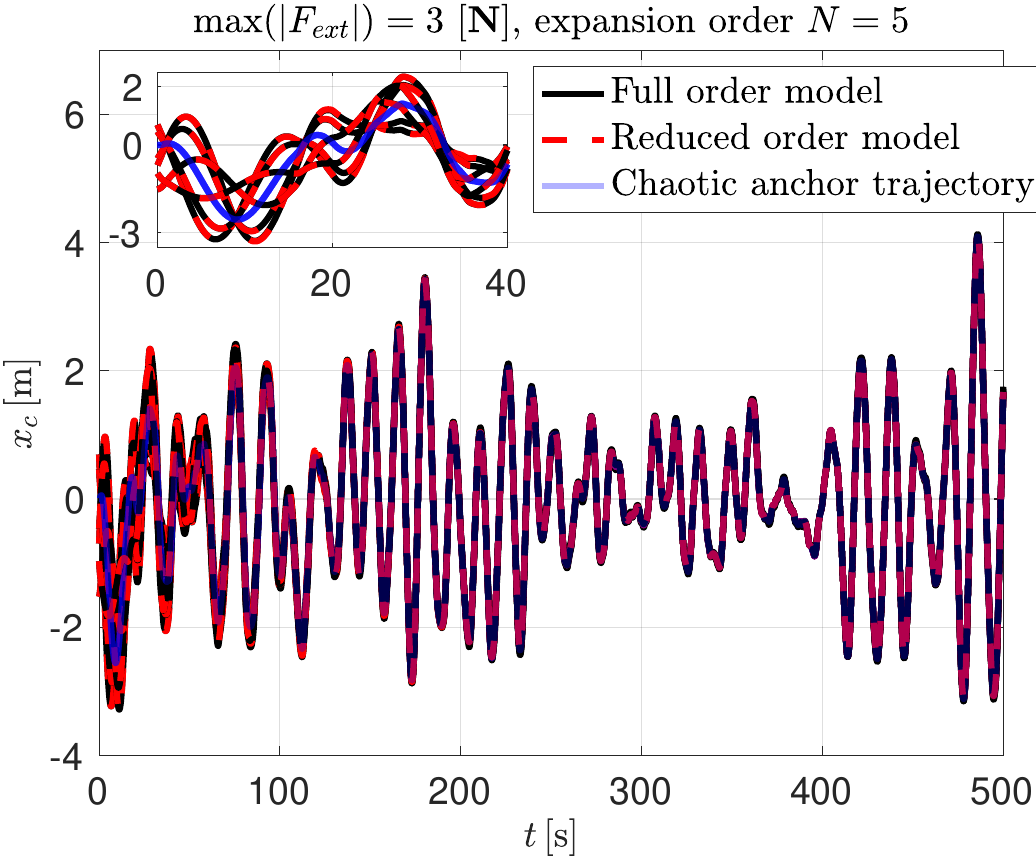}\caption{SSM-reduced model predictions and their verification in the chaotically
forced cart example (see the text for details). The non-dimensional
forcing weakness measure gives $r_{w}=7.8$, which is clearly outside
the small forcing regime. We also zoom in to show the initial transient
phase to highlight how the reduced order model captures the dynamics
of the full order model. \label{fig:xc_weak}}
\end{figure}

\begin{figure}[H]
\begin{centering}
\subfloat[]{\includegraphics[width=0.5\textwidth]{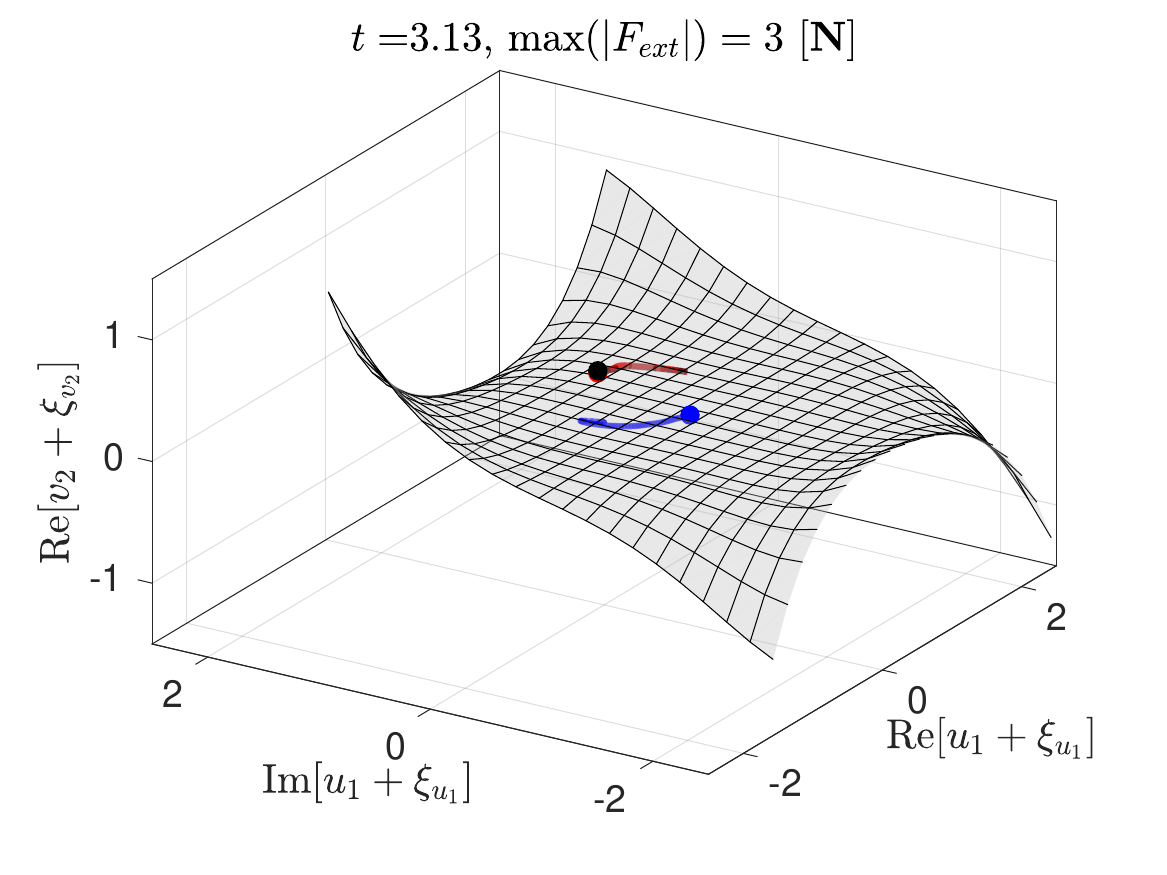}}\subfloat[]{\includegraphics[width=0.5\textwidth]{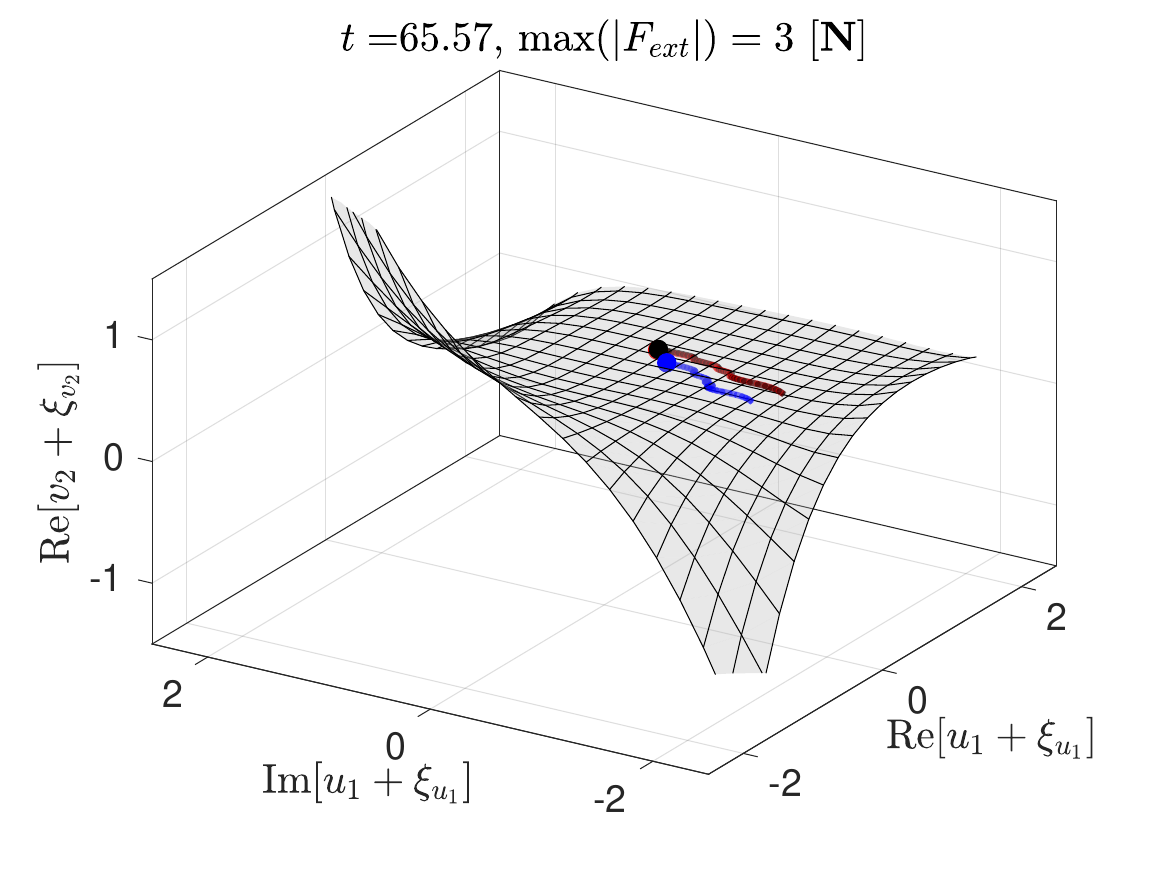}}
\par\end{centering}
\begin{centering}
\vfill{}
\par\end{centering}
\centering{}\subfloat[]{\includegraphics[width=0.5\textwidth]{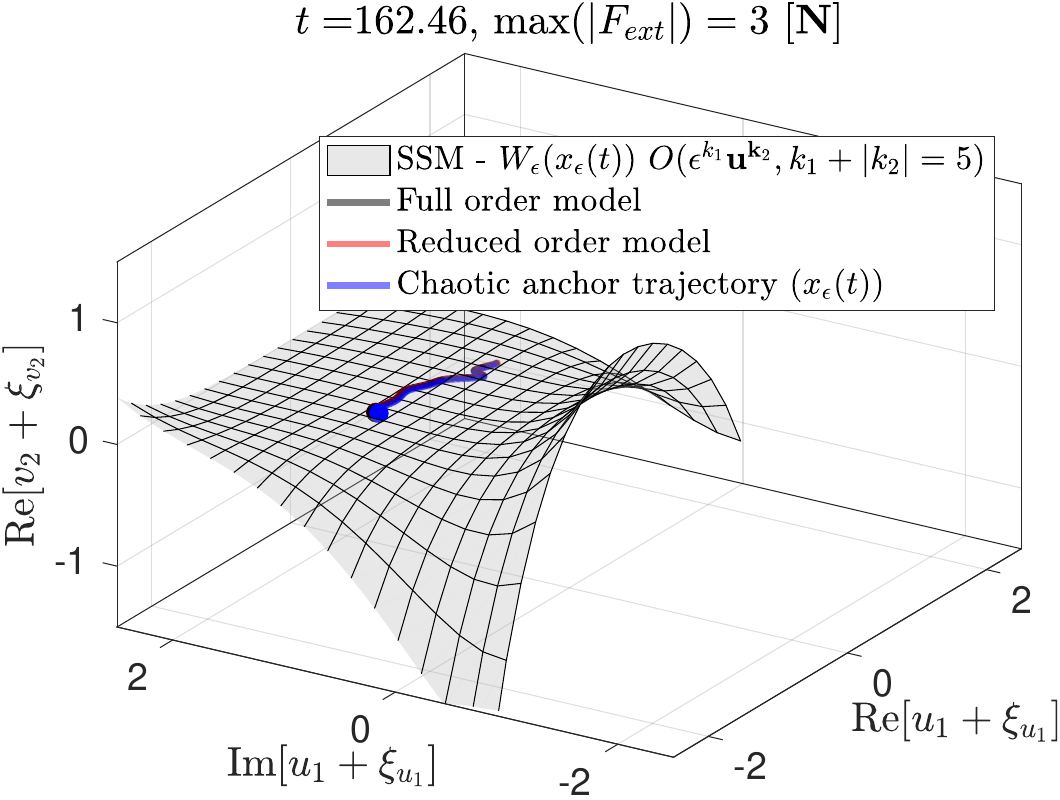}}\caption{(a)\textendash (c) Snapshots of the evolution of the 2D SSM, its anchor
trajectory and a trajectory of the SSM-reduced model shown in the
coordinates used in Theorem \ref{thm:computation of non-automous SSM}.
The non-dimensional forcing weakness measure is again $r_{w}=7.8$,
which is no longer small. See Movie 1 in the Supplementary Material
for the full evolution. (Multimedia available online) \label{fig:Snap_Weak}}
\end{figure}

Finally, we would like to illustrate that the improper integrals in
our asymptotic formulas for anchor trajectories and SSMs indeed converge
and give correct results even for discontinuous forcing, as noted
in Remark \ref{rem:discontonuous forcing}. To this end, we make the
forcing discontinuous at 10 random points in time, as shown in Figure
\ref{SubFig:Disc_Force}. The same formulas for the anchor trajectory,
its attached SSM and the SSM-reduced order model remain formally applicable
and continue to give accurate approximations even for the relatively
large forcing amplitude of $\text{{max}}||F||=3\left[\mathrm{N}\right]$,
as seen in Figure \ref{fig:Disc_Result}. 

\begin{figure}[H]
\subfloat[]{\includegraphics[width=0.5\columnwidth]{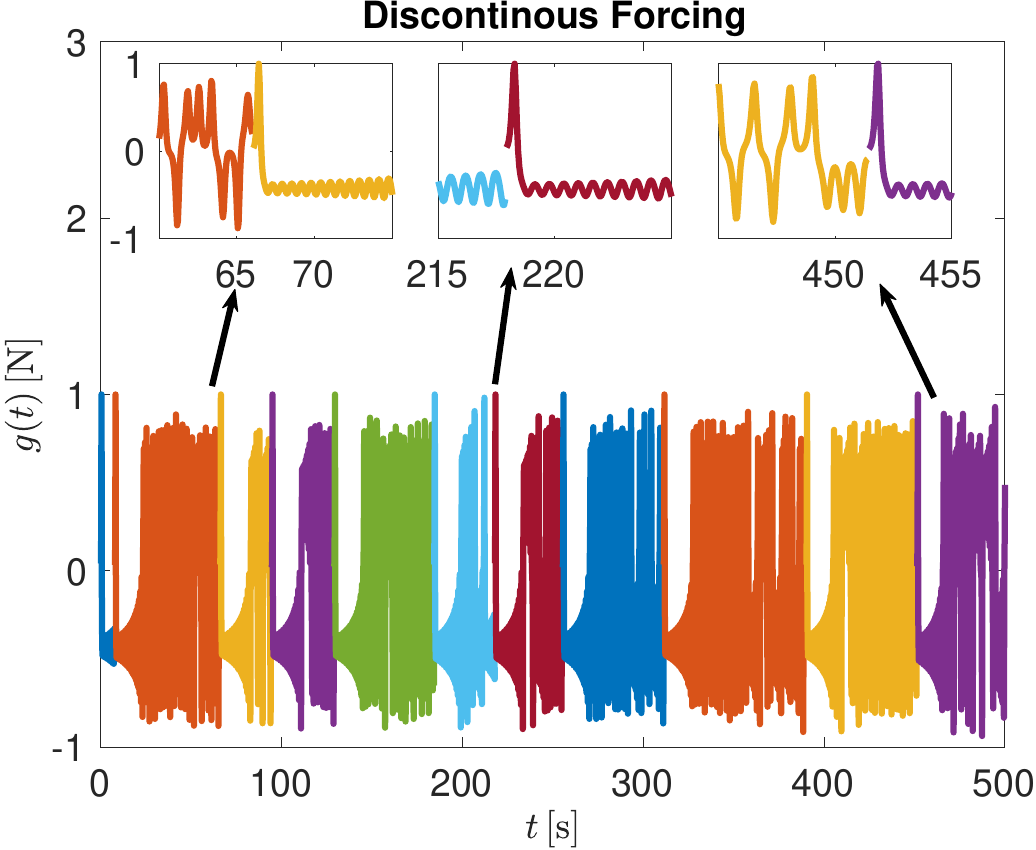}

\label{SubFig:Disc_Force}}\subfloat[]{\includegraphics[width=0.5\columnwidth]{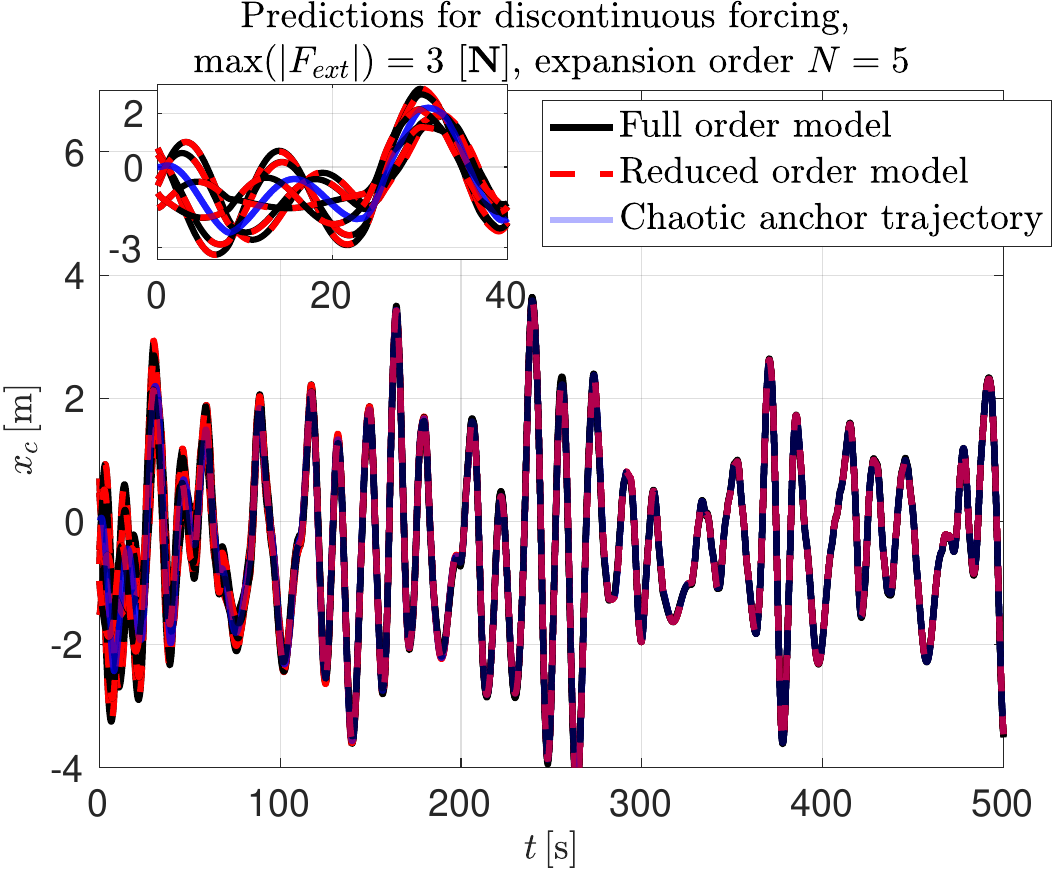}}\caption{(a) Discontinuous chaotic forcing profile for the cart example with
non-dimensional forcing weakness $r_{w}=7.8$. We set $g(t)\equiv0$
for $t<0$. (b) Same as Fig. \ref{fig:xc_weak} but for the discontinuous
chaotic forcing profile shown in the subplot (a).\label{fig:Disc_Result}}
\end{figure}

We close this subsection with a slight modification of our example
that adds more nonlinearity to the problem and underlines the utility
of the higher-order approximations we have developed for the anchor
trajectory (or generalized steady state) and the SSM attached to it.
For brevity, we will only illustrate the need for higher-order approximations
to the anchor trajectory by replacing the localized nonlinearity in
eq. (81) with the more general nonlinearity 
\begin{equation}
\mathbf{f}(\mathbf{{x},\dot{{x}}})=\left(\begin{array}{c}
\gamma q_{1}^{3}-\gamma_{f}\frac{{m_{1}+m_{2}}}{M_{T}}\beta(\mathbf{{x}})\\
-\gamma_{f}\frac{{m_{2}}}{M_{T}}\beta(\mathbf{{x}})\\
\gamma_{f}\beta(\mathbf{{x}})
\end{array}\right),\qquad\beta(\mathbf{{x}})=\Bigg(x_{c}-\frac{{(m_{1}+m_{2})}}{M_{T}}q_{1}-\frac{{m_{2}}}{M_{T}}q_{2}\Bigg)^{3}.\label{eq:new nonlinear spring}
\end{equation}
This makes the originally linear right-most spring between the cart
and the wall in Fig. 3 nonlinear with cubic nonlinear stiffness coefficient
$\gamma_{f}$. 

First, Fig. \ref{fig:xc_compare}a shows for $\gamma_{f}=0\mathrm{\left[N/m^{3}\right]}$
(i.e., for the linear limit of the right-most spring) the $\mathcal{O}\left(1\right)$
and $\mathcal{O}\left(11\right)$ asymptotic approximations to the
anchor trajectory of the chaotic SSM using the results of Theorem
1. Already in this case, the $\mathcal{O}\left(1\right)$ approximation
is noticeably improved by the $\mathcal{O}\left(11\right)$ approximation,
but the $\mathcal{O}\left(1\right)$ approximation is also close to
the actual generalized steady state.

\begin{figure}[H]
\subfloat[]{\includegraphics[width=0.5\columnwidth]{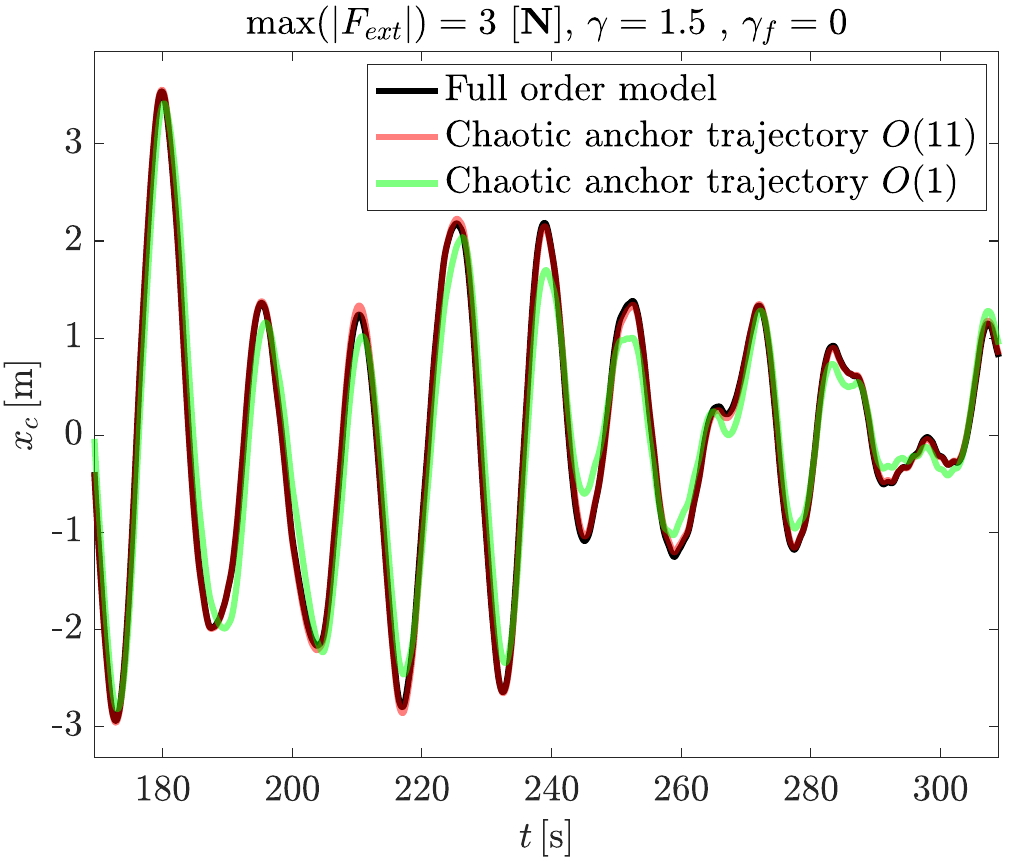}

}\subfloat[]{\includegraphics[width=0.5\columnwidth]{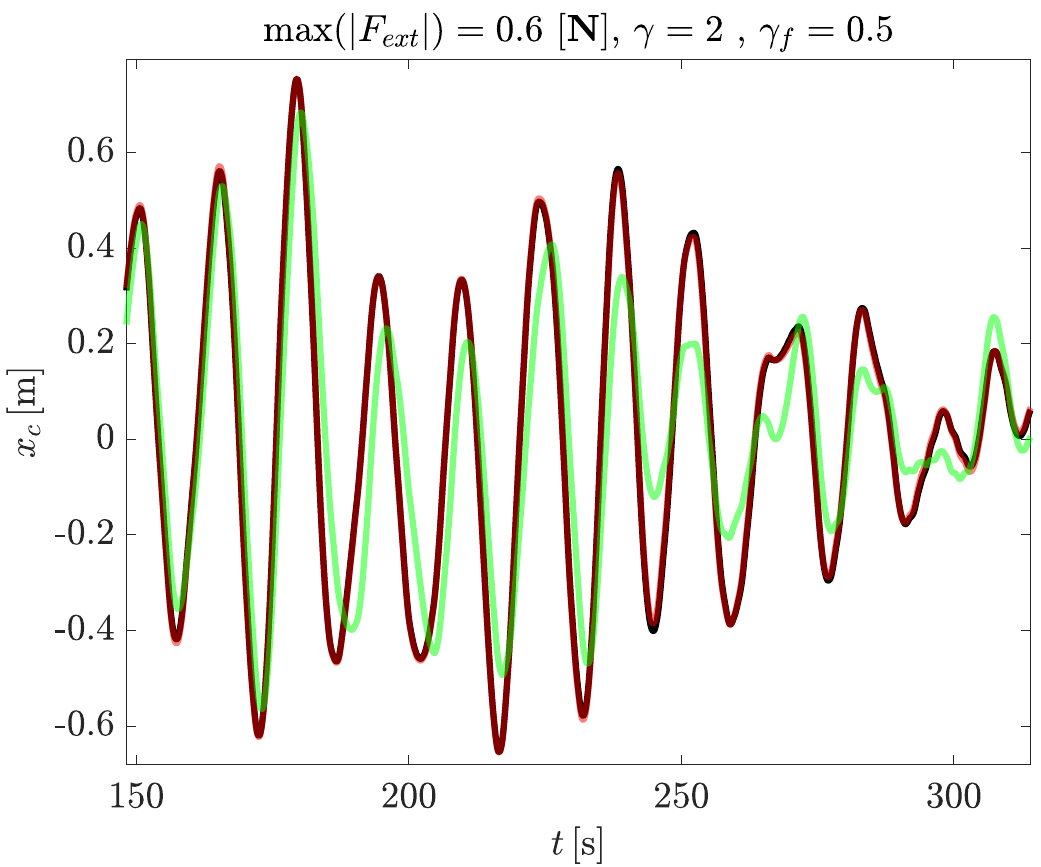}}\caption{Leading-order and higher-order approximation of the generalized steady
state of the cart system under weak chaotic forcing. (a) For $\text{{max}}||F||=3\left[\mathrm{N}\right]$
with non-dimensional forcing weakness $r_{w}=15.82$ and a linear
right-most spring ($\gamma_{f}=0\mathrm{\left[N/m^{3}\right]}$).
(b) For one-fifth of the forcing level in case (a) ($\text{{max}}||F||=0.6\left[\mathrm{N}\right]$,
$r_{w}=6$) but for a nonlinear right-most spring ($\gamma_{f}=0.5\mathrm{\left[N/m^{3}\right]}$)
\label{fig:xc_compare}}
\end{figure}

In contrast, Fig. \ref{fig:xc_compare}b shows a case in which much
smaller forcing is applied but the right-most spring is nonlinear
with $\gamma_{f}=0.5\mathrm{\left[N/m^{3}\right]}$. In this case,
more significant error develops between the actual anchor trajectory
and its $\mathcal{O}\left(1\right)$, while the $\mathcal{O}\left(11\right)$
approximation remains indistinguishably close to the actual anchor
trajectory. This example, therefore, underlines the need for higher
approximations to the anchor point in the presence of stronger nonlinearities.
As the SSM is attached to the anchor trajectory, the accuracy of an
SSM-reduced model also depends critically on this refined approximation.

\subsubsection{Slow forcing }

For the same cart system shown in Fig. \ref{Figure:Shaking_cart},
we select the physical parameters $m_{1}=m_{2}=1\,\mathrm{\left[kg\right]}$,
$M_{f}=2\,\left[\mathrm{kg}\right]$, $k=k_{f}=1\,\,\mathrm{\left[N/m\right]}$
and $c=c_{f}=0.3\,\mathrm{\left[Ns/m\right]}$. We now slow down the
previously applied chaotic forcing by replacing the external forcing
vector in eq. (\ref{eq:data for weakly forced cart}) with $\mathbf{{F}(\alpha)}=\left(0,0,N_{s}g(\alpha)\right)^{\mathrm{T}}.$
We select this forcing to be of large amplitude by letting $N_{s}=10$
and consider the phase range $\alpha\in[0,6]$. In our analysis, we
will start with the minimal forcing speed $\epsilon=\dot{\alpha}=0.001$,
for which obtain the non-dimensional forcing speed $r_{s}=0.72$ from
the second formula in (\ref{eq:weakness and slowness measures}).
This qualifies as moderately slow external forcing relative to the
speed of variation of internal forces along unperturbed trajectories.
In contrast, for the maximal forcing speed $\epsilon=\dot{\alpha}=0.008$
we will consider, we obtain $r_{s}=7.26$, which can no longer be
considered slow. Again, of particular interest will be how our asymptotic
formulas for adiabatic SSMs and their anchor points perform in the
latter case, which is clearly faster than what is normally considered
slowly varying in perturbation theory.

We start by solving for the fixed point $\mathbf{x}_{0}(\alpha)$
of the system under static forcing, as defined by the algebraic equation
in eq. (\ref{eq:hyperbolicity assumption for slo-fast}). In the present
example, this amounts to solving the equation

\begin{equation}
\mathbf{{K}}\mathbf{\mathbf{x}}_{0}+\mathbf{f}(\mathbf{x}_{0},\mathbf{0})=\mathbf{{F}}(\alpha),\label{eq:critical manifold for mechanical systems}
\end{equation}
which is obtained from the general forced equation of motion (\ref{eq:general forced equation of motion}).
We solve this nonlinear algebraic equation for the selected range
of $\alpha$ values by Newton iteration.

Once $x_{0}(\alpha)$ is available numerically, we evaluate the recursive
formula (\ref{eq:recursive formula for adiabatic anchor}) up to order
$N=3$ to obtain

\begin{gather*}
x_{1}(\alpha)=A^{-1}(\alpha)x'_{0}(\alpha),\quad x_{2}(\alpha)=A^{-1}(\alpha)\left(x'_{1}(\alpha)-\left(\begin{array}{c}
0\\
0\\
0\\
-\frac{\gamma(M_{f}+m_{1})}{m_{1}M_{f}}3[x_{1}^{1}(\alpha)]^{2}x_{0}^{1}(\alpha)\\
\frac{\gamma}{m_{1}}3[x_{1}^{1}(\alpha)]^{2}x_{0}^{1}(\alpha)\\
0
\end{array}\right)\right),
\end{gather*}

\begin{gather*}
x_{3}(\alpha)=A^{-1}(\alpha)\left(x'_{2}(\alpha)-\left(\begin{array}{c}
0\\
0\\
0\\
-\frac{\gamma(M_{f}+m_{1})}{m_{1}M_{f}}([x_{1}^{1}(\alpha)]^{3}+6x_{1}^{1}(\alpha)x_{0}^{1}(\alpha)x_{2}^{1}(\alpha))\\
\frac{\gamma}{m_{1}}([x_{1}^{1}(\alpha)]^{3}+6x_{1}^{1}(\alpha)x_{0}^{1}(\alpha)x_{2}^{1}(\alpha))\\
0
\end{array}\right)\right).
\end{gather*}

Having computed this approximation for the anchor trajectory, we change
to the coordinates defined in eq. (\ref{eq:u-v transformation,  adiabatic case}).
In those coordinates, we compute the $\alpha$-dependent matrices
defined in eq. (\ref{eq:alpa dependent matrices}) numerically. Their
general form in the present examples is 

\[
A^{u}=\left(\begin{array}{cc}
\lambda_{1}(\alpha) & 0\\
0 & \lambda_{2}(\alpha)
\end{array}\right),\quad A^{v}=\left(\begin{array}{cccc}
\lambda_{3}(\alpha) & 0 & 0 & 0\\
0 & \lambda_{4}(\alpha) & 0 & 0\\
0 & 0 & \lambda_{5}(\alpha) & 0\\
0 & 0 & 0 & \lambda_{6}(\alpha)
\end{array}\right).
\]
To perturb from this frozen-time limit, we set $\alpha=\epsilon\Omega t$
and $\Omega=1\text{{Hz}}$ to make the small parameter $\epsilon$
non-dimensional.

We set $N$= 3 and solve for all the $\alpha$ dependent coefficients
in the expansion of the 2D slow chaotic SSM by solving a recursive
linear algebraic equations (\ref{eq:h_=00007Bkp=00007D  adiabatic case})
order by order. We find that all 28 coefficients that describe the
SSM up to cubic order are nonzero. An important numerical step used
in this task is to perform differentiation with respect to $\alpha$.
For this purpose, we first need to re-orient the unit eigenvectors
originally returned by MATLAB for specific values of $\alpha$ to
make these eigenvectors smooth functions of $\alpha.$ We then perform
a central finite differencing which includes 4 adjacent points in
the $\alpha$ direction. We finally employ the Savitzky\textendash Golay
filtering function of MATLAB to smoothen the results obtained from
finite differencing an already finite-differenced signal. 

To illustrate the ultimate accuracy of these numerical procedures,
Table \ref{tab:Slow cart} shows the normalized mean trajectory error
for predictions from two different SSM-reduced models. We see that
under increasing $\epsilon$ (i.e., faster forcing), the errors still
remain an order-of-magnitude smaller then $\epsilon$. 
\begin{table}[H]
\begin{centering}
\begin{tabular}{|c|c|c|}
\cline{2-3} \cline{3-3} 
\multicolumn{1}{c|}{} & $O(\epsilon,j=1,|m|=3)$ & $O(\epsilon^{3},j+|m|=3)$\tabularnewline
\hline 
$\epsilon=0.001$~~ ($r_{s}=0.72$) & $9.8\times10^{-5}$ & $7.2\times10^{-6}$\tabularnewline
\hline 
$\epsilon=0.008$~~ ($r_{s}=7.26$) & $6.4\times10^{-3}$ & $7.8\times10^{-4}$\tabularnewline
\hline 
$\epsilon=0.010$~~ ($r_{s}=8.66$) & $1.1\times10^{-2}$ & $2.9\times10^{-3}$\tabularnewline
\hline 
\end{tabular}
\par\end{centering}
\caption{Normalized mean trajectory error for predictions from two different
SSM-reduced models under different $\epsilon$ values. The corresponding
non-dimensional forcing speed $r_{s}$ is also shown for each listed
value of $\epsilon$. \label{tab:Slow cart}}

\end{table}

As we did for weak forcing, we track the center of mass coordinate
$x_{c}$ to test the accuracy of our asymptotic formulas up to order
$N=3$ for $\epsilon=0.008$ in Fig. \ref{Fig:slow_forcing}. Note
how this cubic-order approximation of the SSM-reduced model already
tracks the full solution closely. Finally, as in the weakly forced
case, we also plot in Fig. \ref{fig:Snap_Slow-1} snapshots of the
evolution of the slow anchor trajectory, the attached chaotic SSM,
a simulated full-order trajectory and its prediction from the same
initial condition based on the SSM-reduced dynamics (Multimedia available
online). These snapshots further confirm the accuracy of our analytic
approximation formulas, now in several coordinate directions.

\begin{figure}[t]
\subfloat[]{\includegraphics[width=0.45\columnwidth]{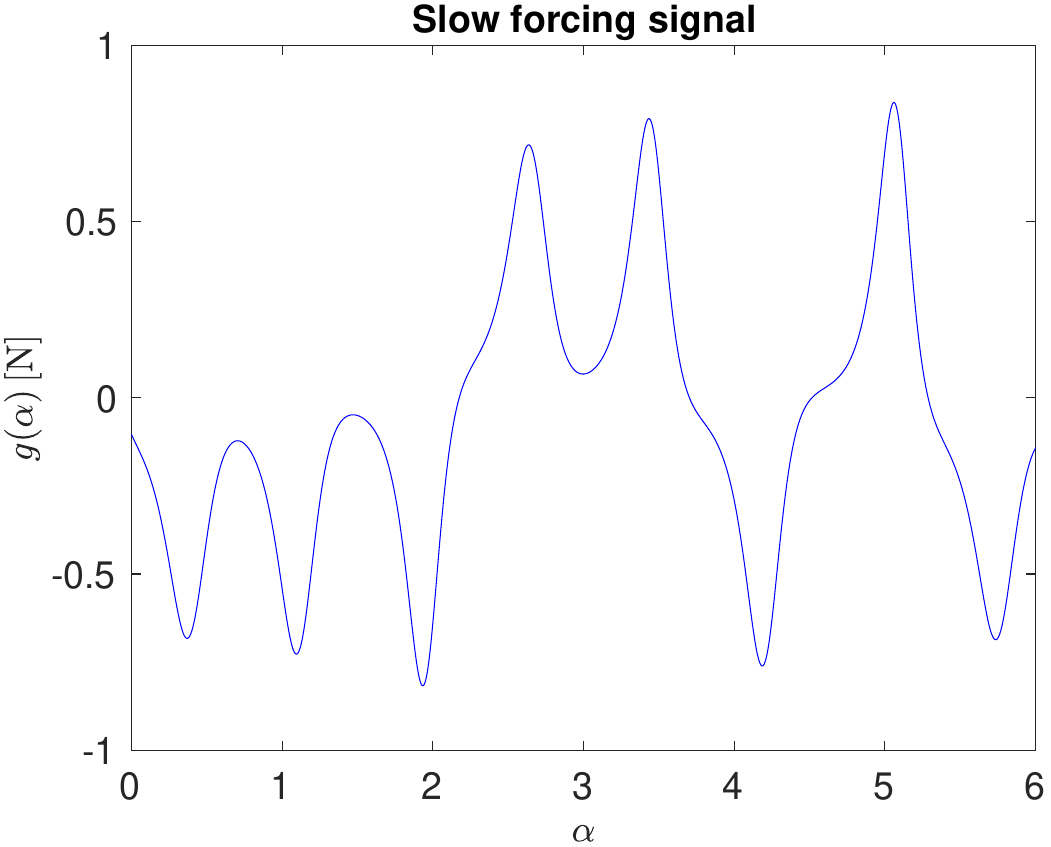}}\subfloat[]{\includegraphics[width=0.5\columnwidth]{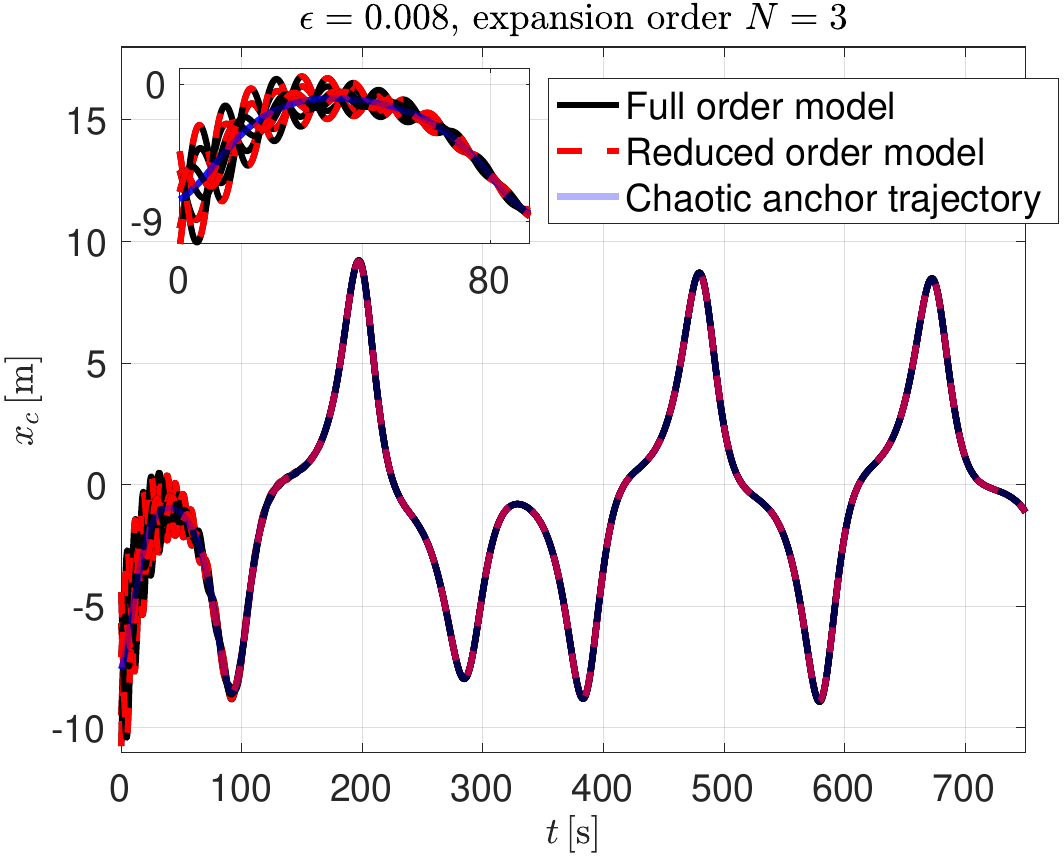}}

\caption{(a) Slow chaotic forcing signal $g(\epsilon t)$ for the cart example
for $\epsilon=0.008$, which amounts to $r_{s}=7.26$. (b) Assessment
of the accuracy of the anchor trajectory and the reduced dynamics
of the slow chaotic SSM in the cart example, based on the history
of the $x_{c}(t)$ coordinate of the center of mass. \label{Fig:slow_forcing}}
\end{figure}

\begin{figure}[H]
\subfloat[]{\includegraphics[width=0.5\textwidth]{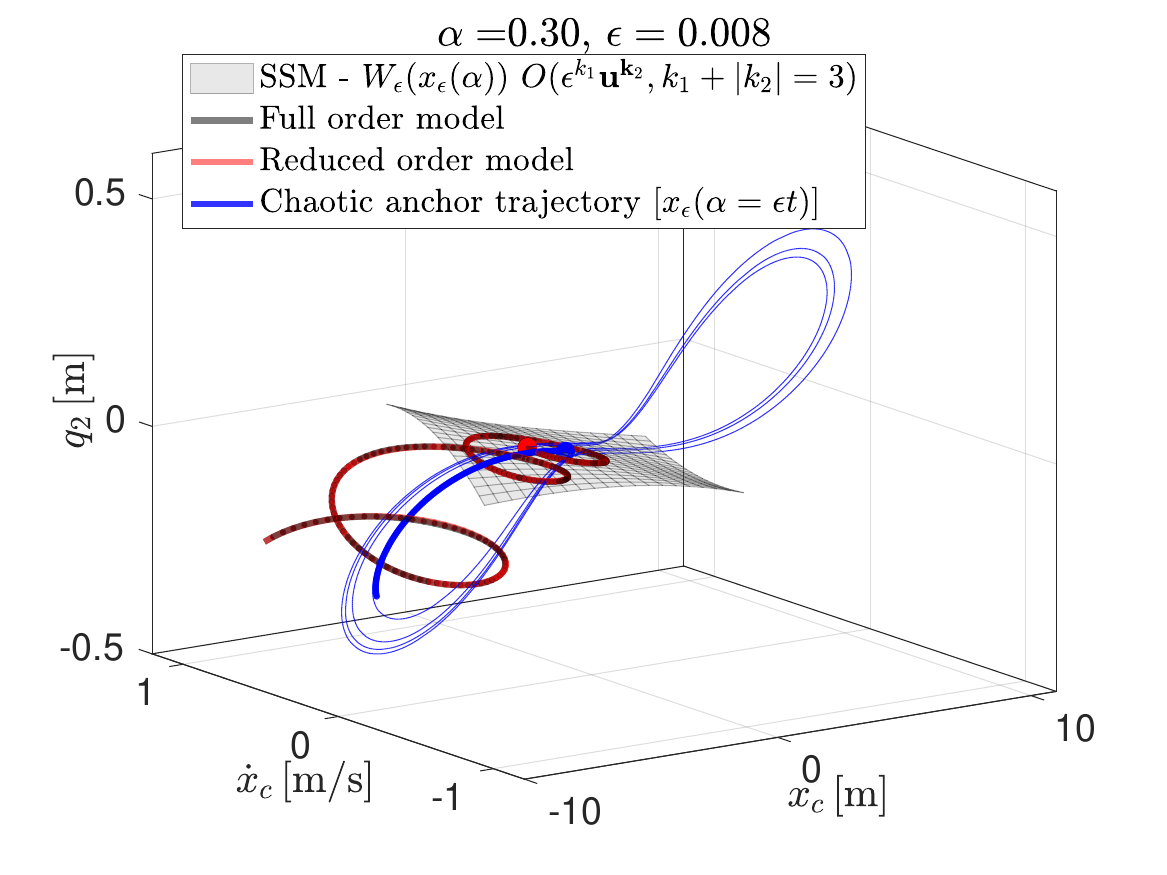}\label{fig:slow_snap1}}\subfloat[]{\includegraphics[width=0.5\textwidth]{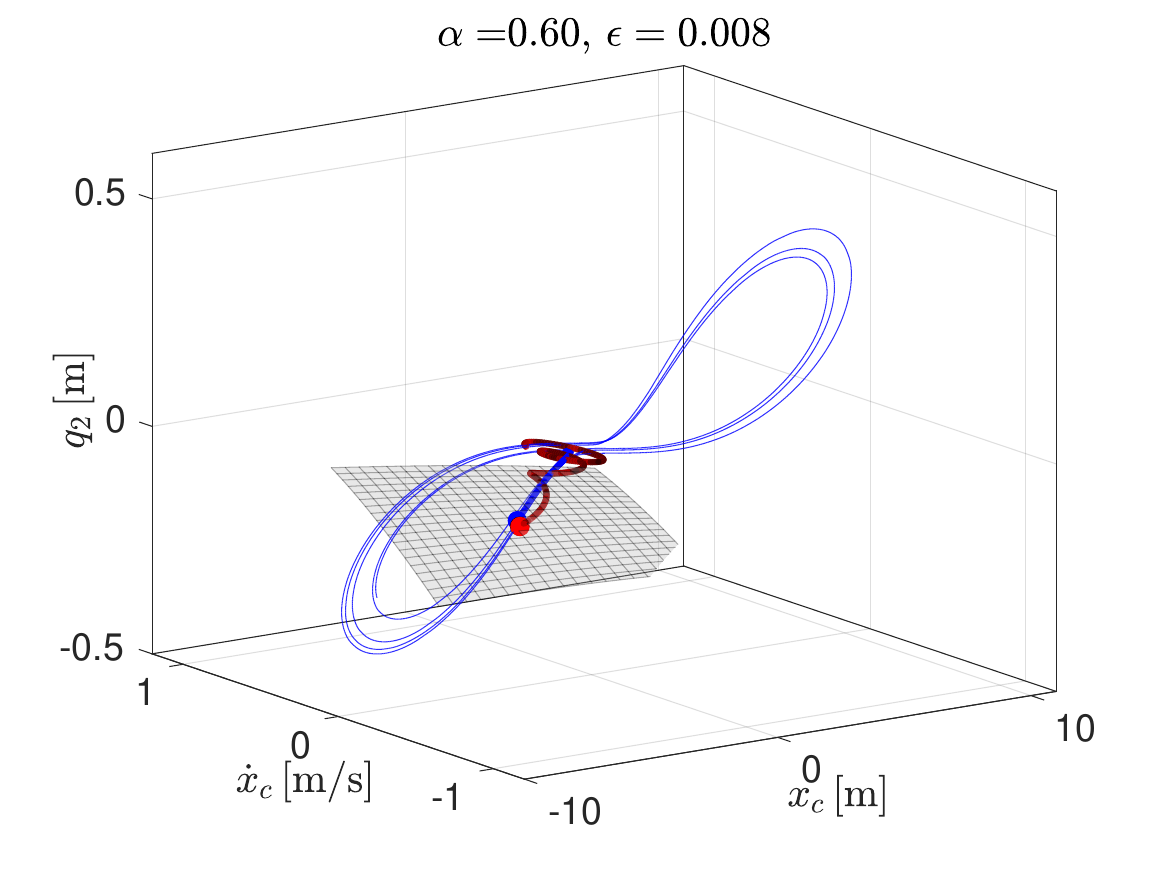}}

\vfill{}

\centering{}\subfloat[]{\includegraphics[width=0.5\textwidth]{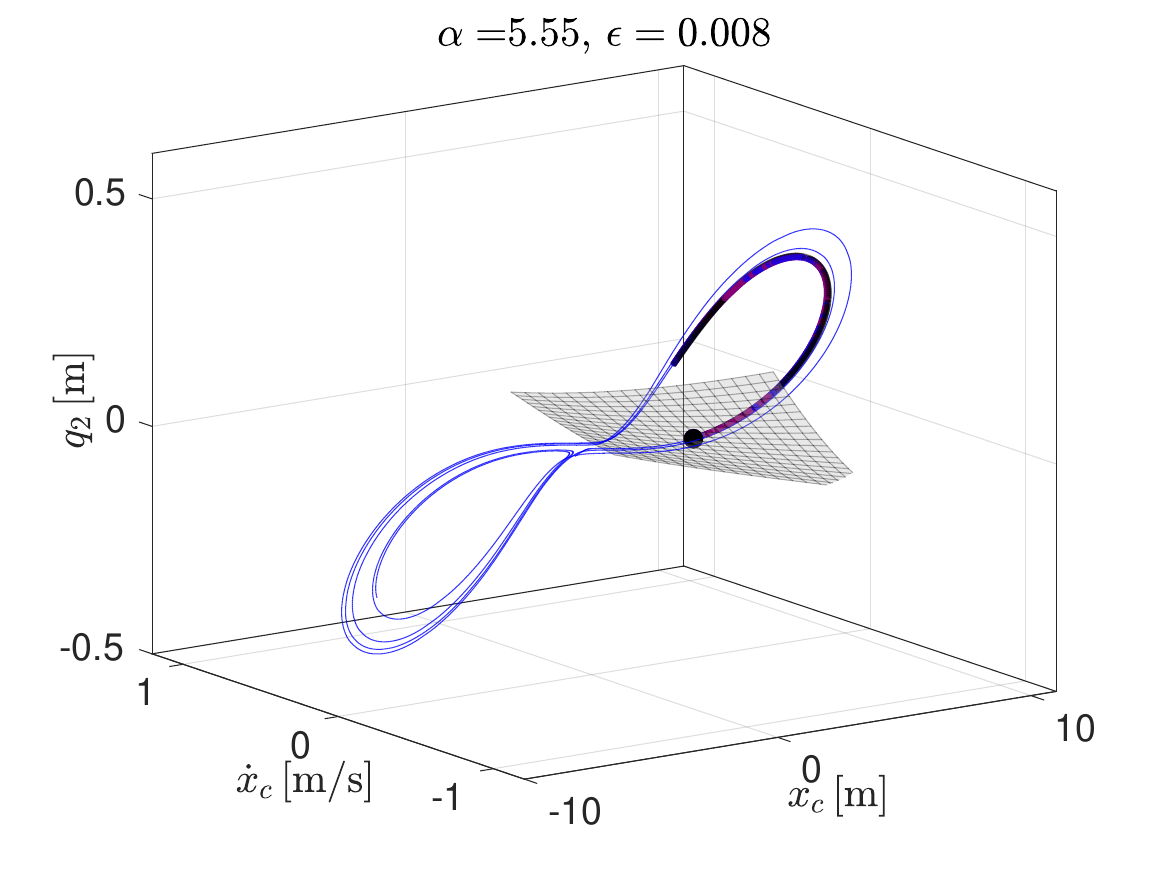}}\caption{(a)\textendash (c) Snapshots of the evolution of the slow chaotic
SSM, its anchor trajectory, the true solution of the full system and
the SSM-reduced model prediction for the true solution of the chaotically
shaken cart, all represented in the physical coordinates $[q_{2},x_{c},\dot{{x}}_{c}]$.
The non-dimensional forcing speed is $r_{s}=7.26$. See Movie 2 in
the Supplementary Material for the full evolution. (Multimedia available
online) \label{fig:Snap_Slow-1}}
\end{figure}

\subsection{Example 2: Chaotically forced bumpy rail\label{subsec:Bumbed real example}}

\subsubsection{Weak forcing}

The physical setup of our second example, a particle moving on a shaken
rail with a bump in the middle, is shown in Fig. \ref{fig:Mixed_mode}.
A major difference from our previous mechanical example is that the
origin of the unforced system is now an unstable, saddle-focus-type
fixed point. Therefore, under weak chaotic forcing, a nearby chaotic
SSM of mixed-mode type is expected to arise attached to a saddle-focus-type
chaotic anchor trajectory.

We use the center of mass position $x_{c}$ and the relative position
$x$ of the smaller mass $m$ as generalized coordinates. In terms
of these coordinates, the quantities featured in the general equation
of motion (\ref{eq:general mechanical system}) take the form

\[
\mathbf{M}={\scriptscriptstyle \left(\begin{array}{cc}
M_{f}+m & 0\\
0 & \frac{mM_{f}}{m+M_{f}}
\end{array}\right)},\quad\mathbf{K}={\scriptstyle \left(\begin{array}{cc}
k_{f} & -k_{f}\frac{m}{M_{f}+m}\\
-k_{f}\frac{m}{M_{f}+m} & -4\beta a^{2}mg+k_{f}\frac{m^{2}}{(M_{f}+m)^{2}}
\end{array}\right)},
\]

\[
\mathbf{C}={\scriptstyle \left(\begin{array}{cc}
c_{f} & -c_{f}\frac{m}{M_{f}+m}\\
-c_{f}\frac{m}{M_{f}+m} & c+c_{f}\frac{m^{2}}{(M_{f}+m)^{2}}
\end{array}\right)},\quad\mathbf{f}(\mathbf{x},\dot{\mathbf{x}})={\scriptstyle \left(\begin{array}{c}
0\\
4g\beta mx^{3}+16m\beta^{2}a^{4}x\dot{{x}}^{2}
\end{array}\right)},
\]
 
\[
\mathbf{F}(t)={\scriptscriptstyle \left(\begin{array}{c}
(m+M_{f})g(t)\\
0
\end{array}\right)},
\]
with for the coordinate vector $\mathbf{x}=\left(x_{c},x\right)^{\mathrm{T}}$.
The parameter $\beta$ controls the height of the bumpy rail and $a$
the distance to the wells. The parameter $c$ models linear damping
of the mass $m$ due to the rail. The velocity-dependent nonlinearity
$\mathbf{f}(\mathbf{x},\dot{\mathbf{x}})$ is the result of expanding
the exact equations of motion with respect to the height of the rail
and keeping only the leading-order nonlinearities.

We set $m=1\,\mathrm{\left[kg\right]}$, $M_{f}=4\mathrm{\,\left[kg\right]}$,
$k_{f}=1\,\mathrm{\left[N/m\right]}$, $c_{f}=c=0.3$~$\left[\mathrm{Ns/m}\right]$,
$a=0.3\,\left[\mathrm{m}\right]$, $\beta=\frac{1}{5a^{3}}$ and $g=9.8$~$\left[\mathrm{m/s^{2}}\right]$
. The forcing signal $g(t)$ will be the same as in the cart example.
We scale the chaotic forcing signal in a way that the non-dimensional
forcing weakness measure defined in formula (\eqref{eq:weakness and slowness measures})
gives $r_{w}=44.6$ when evaluated on the phase space region containing
the trajectories used in our analysis. Therefore, the external force
here is large relative to the internal forces of the system.

This forcing magnitude is outside the small range in which Theorem
\ref{thm:computation of non-automous SSM} strictly guarantees the
existence of a saddle-type anchor trajectory for a mixed-mode chaotic
SSM in this problem. We nevertheless evaluate our asymptotic expansions
for the anchor trajectory, as those expansions hold as long as the
SSM smoothly persist under increasing forcing. Specifically, a cubic-order
approximation $x^{*}(t)\approx\sum_{\nu=1}^{3}x_{\nu}(t)$ for the
anchor trajectory $x^{*}(t)$ from formula (\ref{eq:anchor trajectory expansion})
requires the functions

\begin{gather*}
x_{1}(t)=\int_{-\infty}^{\infty}G(t-\tau)f_{1}(\tau)d\tau,\qquad x_{2}(t)\equiv0,
\end{gather*}

\[
x_{3}(t)=\int_{-\infty}^{\infty}G(t-\tau)\left(\begin{array}{c}
0\\
0\\
0\\
-4g(1+\frac{m}{M})\beta[x_{1}^{2}(\tau)]^{3}-16a^{4}(1+\frac{m}{M})\beta^{2}x_{1}^{2}(\tau)[x_{1}^{4}(\tau)]^{2}
\end{array}\right)d\tau,
\]
with the Green's function $G(t)$ defined in formula (\ref{eq:Green's function definition})
for saddle-type anchor trajectories. The external forcing appears
now as $f_{1}(t)=\left(0,g(t),0,0\right)^{\mathrm{T}}$ in the first-order
formulation of the equation of motions. We calculate these integrals
using the same numerical methodology as in our first cart example.
In this example, the calculation of $G(t-\tau)$ from (\ref{eq:Green's function definition})
involves the matrices 

\[
A^{u}=\left(\begin{array}{cc}
\lambda_{1} & 0\\
0 & \lambda_{2}
\end{array}\right),\quad A^{v}=\left(\begin{array}{cc}
\lambda_{3} & 0\\
0 & \lambda_{4}
\end{array}\right),
\]
where the eigenvalues of the unforced saddle-focus-type fixed point
at the origin are $\lambda_{1}=5.5196$, $\lambda_{2}=-5.9094$, $\lambda_{3,4}=-0.0301\pm0.4465i$.
The 2D non-autonomous SSM can be constructed as a graph over the mixed-mode
tangent space corresponding to the real eigenvalues $\lambda_{1}$
and $\lambda_{2}$. 

Without listing the details here, we also perform a similar calculation
for the two attracting anchor trajectories created by the chaotic
forcing from the two asymptotically stable fixed points of the unperturbed
system that lie at the two bottom points of the bumpy rail. These
two stable anchor trajectories will represent two chaotic attractors
for the forced system. Trajectories initialized near the unstable
anchor trajectory on its attached 2D mixed-mode SSM will converge
to one of these chaotic attractors. This is indeed observable in Fig.
\ref{Fig:mixed-mode-weak}, in which we launch several trajectories
near the unstable anchor trajectory (red) within the 2D mixed-mode
SSM approximated up to cubic order. These trajectories then converge
to one of the two predicted attracting chaotic anchor trajectories
(blue) which we have computed up to cubic order of accuracy from our
formulas.

The instability of the anchor trajectory perturbing from the origin
makes it challenging to verify the accuracy of our local expansion
(\ref{eq:h_=00007Bkp=00007D  adiabatic case}) for the 2D mixed-mode
SSM attached to this trajectory. We can nevertheless verify the local
accuracy for these formulas by tracking nearby trajectories launched
from the predicted 2D SSM and confirm in Fig. \ref{fig:snap_mixed_mode_weak}
(Multimedia available online) that those trajectories evolve together
with the predicted SSM until ejected from a neighborhood of the unstable
anchor trajectory. 

\begin{figure}[H]
\subfloat[]{\includegraphics[width=0.45\columnwidth]{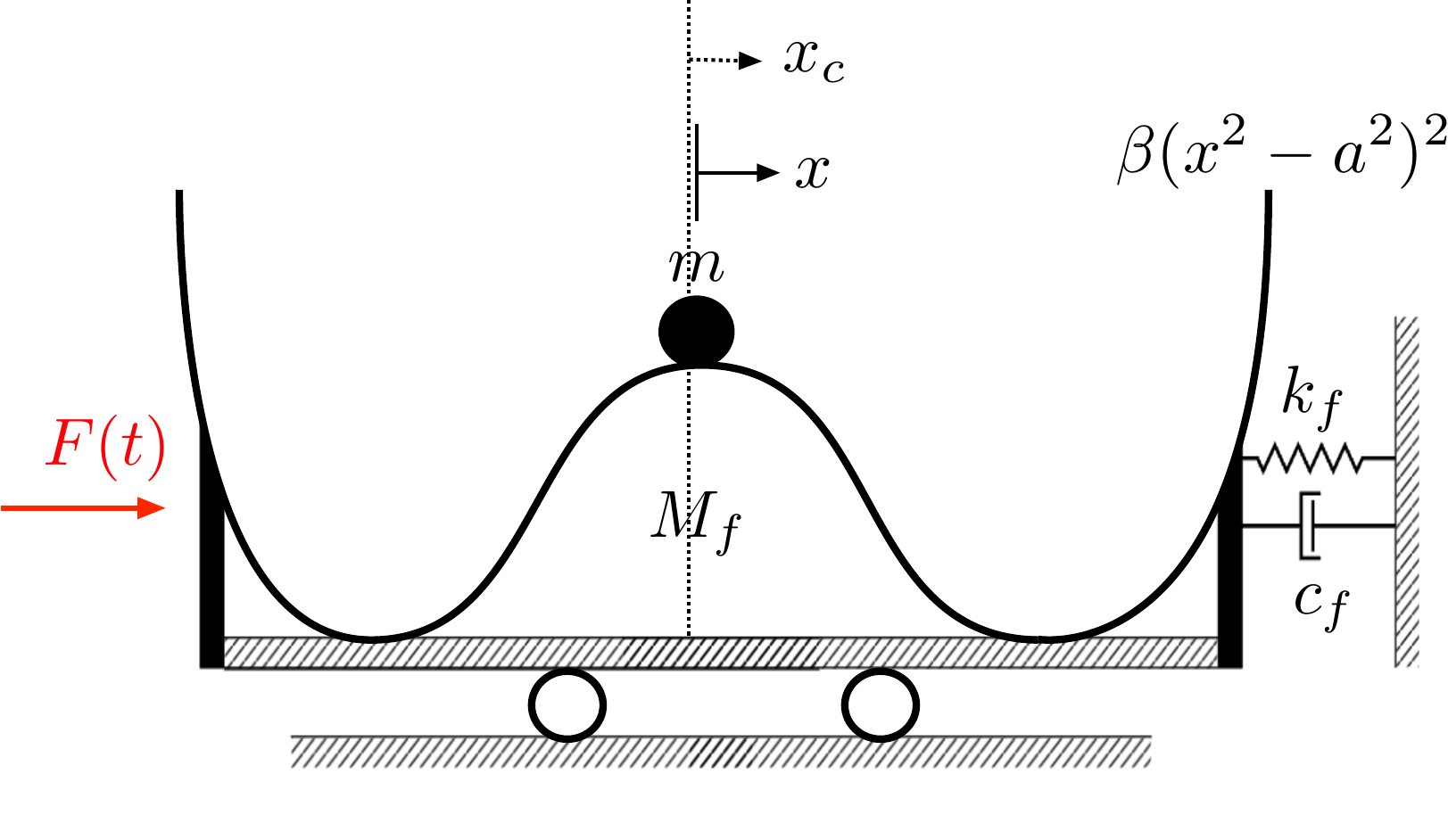}\label{fig:Mixed_mode}}\hfill{}\subfloat[]{\includegraphics[width=0.5\columnwidth]{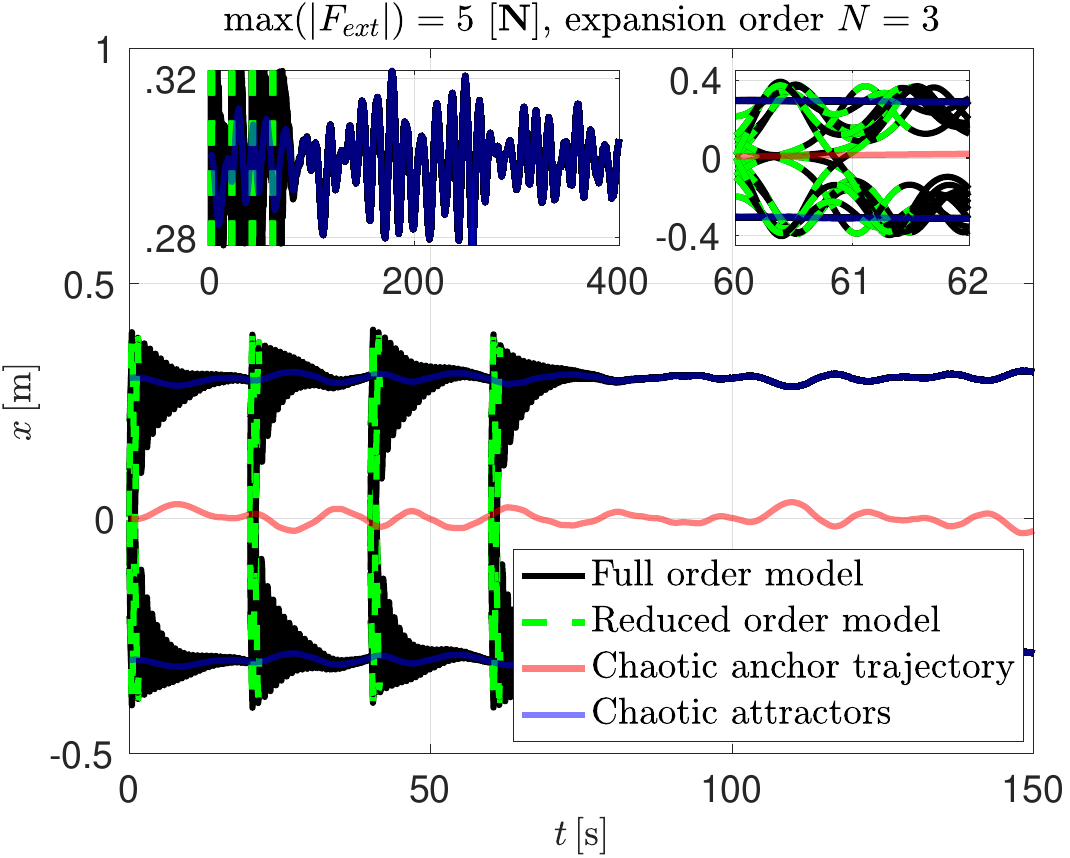}\label{Fig:mixed-mode-weak}}\caption{(a) Setup for the chaotically forced bumpy real example (b) Plots
of 16 initial conditions on local mixed-mode non-autonomous 2D SSM,
released at four different times. The left inset shows how the transient
behavior of the full system matches those of the anchor trajectories
calculated from our theory. The right inset shows fast convergence
of the full solution to the anchor trajectories together with local
predictions of the reduced order model. The non-dimensional forcing
weakness measure in this example is $r_{w}=44.6$.}
\end{figure}

\begin{figure}[H]
\subfloat[]{\includegraphics[width=0.5\textwidth]{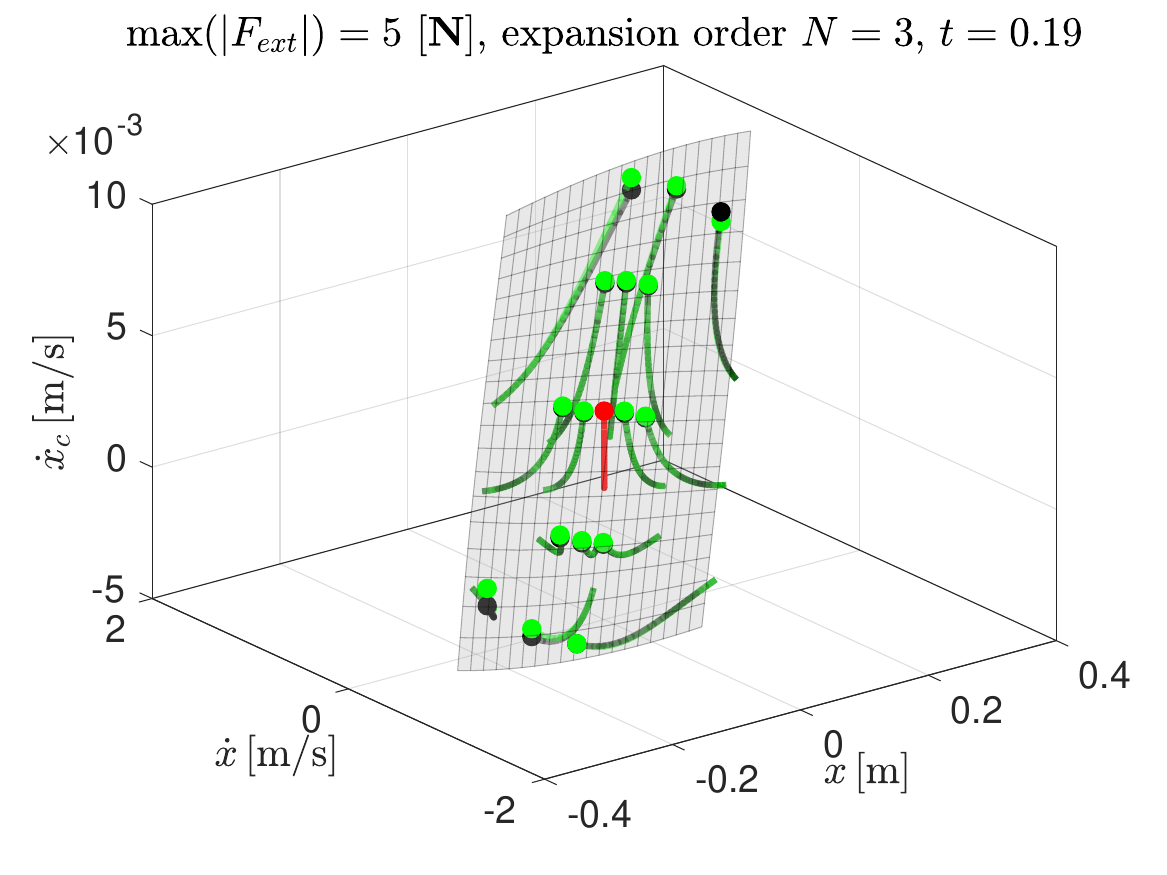}}\subfloat[]{\includegraphics[width=0.5\textwidth]{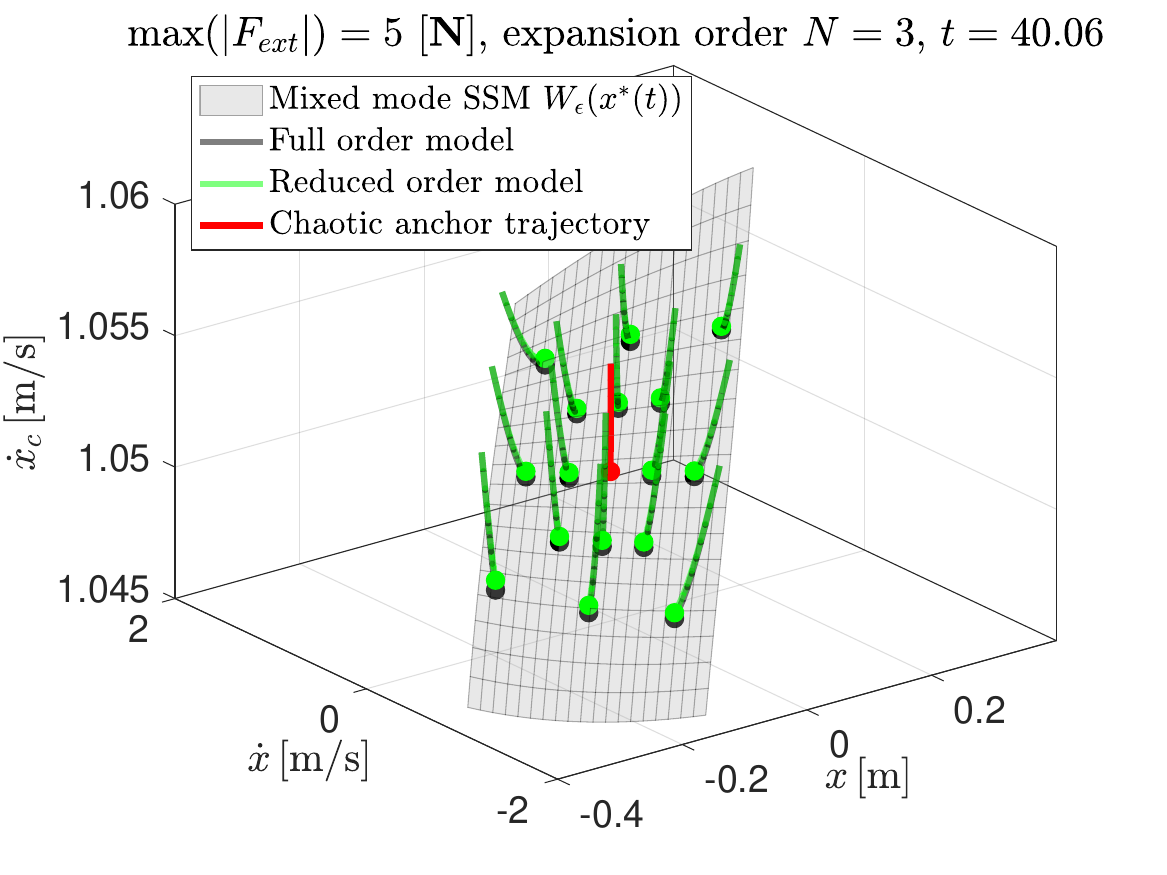}}\caption{(a)\textendash (b) SSM dynamics in the (not so) weakly forced bumpy
rail example. Shown are snapshots of the predicted 2D mixed-mode SSM
(grey), its anchor trajectory (red), trajectories predicted by the
SSM-reduced model (green) and full system simulations of trajectories
launched from the same initial conditions (black). The non-dimensional
forcing weakness measure is again $r_{w}=44.6$. See Movie 3 in the
Supplementary Material for the full evolution. (Multimedia available
online)\label{fig:snap_mixed_mode_weak}}
\end{figure}

\subsubsection{Slow forcing\label{subsec:Slow-forced bumpy rail}}

In the same bumpy rail problem but now with height $\beta=\frac{1}{10^{3}}$,
we rescale the forcing in time to make it slowly varying. The slow
forcing vector is given by $\mathbf{{F}(\alpha)}=\left(\begin{array}{c}
N_{s}g(\alpha)\\
0
\end{array}\right)$ with $N_{s}=3$ and the range of interest is $\alpha\in[0,6].$ We
will perform simulations for the forcing speed $\epsilon=\dot{\alpha}=0.01$,
which yields the non-dimensional forcing speed measure $r_{s}=1.22$,
signaling moderately fast forcing.

As in our first example, we compute the zeroth order term $x_{0}(\alpha)$
in the expansion of the slow anchor trajectory for the SSM by solving
an algebraic equation of the form (\ref{eq:critical manifold for mechanical systems})
by Newton iteration. 

\[
\mathbf{{K}}\mathbf{{x}}_{0}+\mathbf{f}(\mathbf{x}_{0},\mathbf{0})=\mathbf{{F}}(\alpha).
\]
Up to cubic order, the expansion (\ref{eq:recursive formula for adiabatic anchor})
for the slow, unstable anchor trajectory perturbing from the origin
under the slow forcing can be written as 

\begin{gather*}
x_{1}(\alpha)=A^{-1}(\alpha)x'_{0}(\alpha),\\
x_{2}(\alpha)={\scriptstyle A^{-1}(\alpha)\left(x'_{1}(\alpha)-\left(\begin{array}{c}
0\\
0\\
0\\
(1+\frac{m}{M})\left(-12g\beta x_{0}^{2}(\alpha)[x_{1}^{2}(\alpha)]^{2}-16a^{4}\beta^{2}x_{0}^{2}(\alpha)[x_{1}^{4}(\alpha)]^{2}\right)
\end{array}\right)\right)},
\end{gather*}

\begin{gather*}
x_{3}(\alpha)={\scriptscriptstyle {\scriptstyle A^{-1}(\alpha)\left(x'_{2}(\alpha)-\left(\begin{array}{c}
0\\
0\\
0\\
(1+\frac{m}{M})\times\\
{\scriptstyle \left(-4g\beta\left([x_{1}^{2}(\alpha)]^{3}+6x_{0}^{2}(\alpha)x_{1}^{2}(\alpha)x_{2}^{2}(\alpha)\right)-16a^{4}\beta^{2}\left(x_{1}^{2}(\alpha)[x_{1}^{4}(\alpha)]^{2}+2x_{0}^{2}(\alpha)x_{1}^{4}(\alpha)x_{2}^{4}(\alpha)\right)\right)}
\end{array}\right)\right)}}.
\end{gather*}

With this approximation for the anchor trajectory at hand, we can
change to the coordinates defined in eq. (\ref{eq:u-v transformation,  adiabatic case}).
In those coordinates, we compute the $\alpha$-dependent matrices
defined in eq. (\ref{eq:alpa dependent matrices}) now arise as

\[
A^{u}=\left(\begin{array}{cc}
\lambda_{1}(\alpha) & 0\\
0 & \lambda_{2}(\alpha)
\end{array}\right),\quad A^{v}=\left(\begin{array}{cc}
\lambda_{3}(\alpha) & 0\\
0 & \lambda_{4}(\alpha)
\end{array}\right),
\]
where we again set $\alpha=\epsilon\Omega t$ with $\Omega=1\text{ {Hz}}$,
as in our first example.

As in the weakly forced version of this example, we plot the predictions
from the 2D SSM-reduced order model and compare it to simulations
of the full system in Fig. \ref{fig:slow_mixed_x} (Multimedia available
online). We again see close agreement of the reduced dynamics with
the true solution in the vicinity of the slow, unstable anchor trajectory,
prior to the convergence of the trajectories to the two stable, chaotic
anchor trajectories arising from the two stable unforced equilibria
along the rail. Finally, as in our first example, we show snapshots
of the projected 2D slow, mixed-mode SSM along with some trajectories
of its restricted dynamics. These agains stay close locally to trajectories
of the full system that are launched from the same initial conditions,
thereby confirming the expectations put forward in Remark \ref{rem: Applicability-to-slow-mixed-mode-SSM}.

\begin{figure}[H]
\begin{centering}
\subfloat[]{\includegraphics[width=0.5\columnwidth]{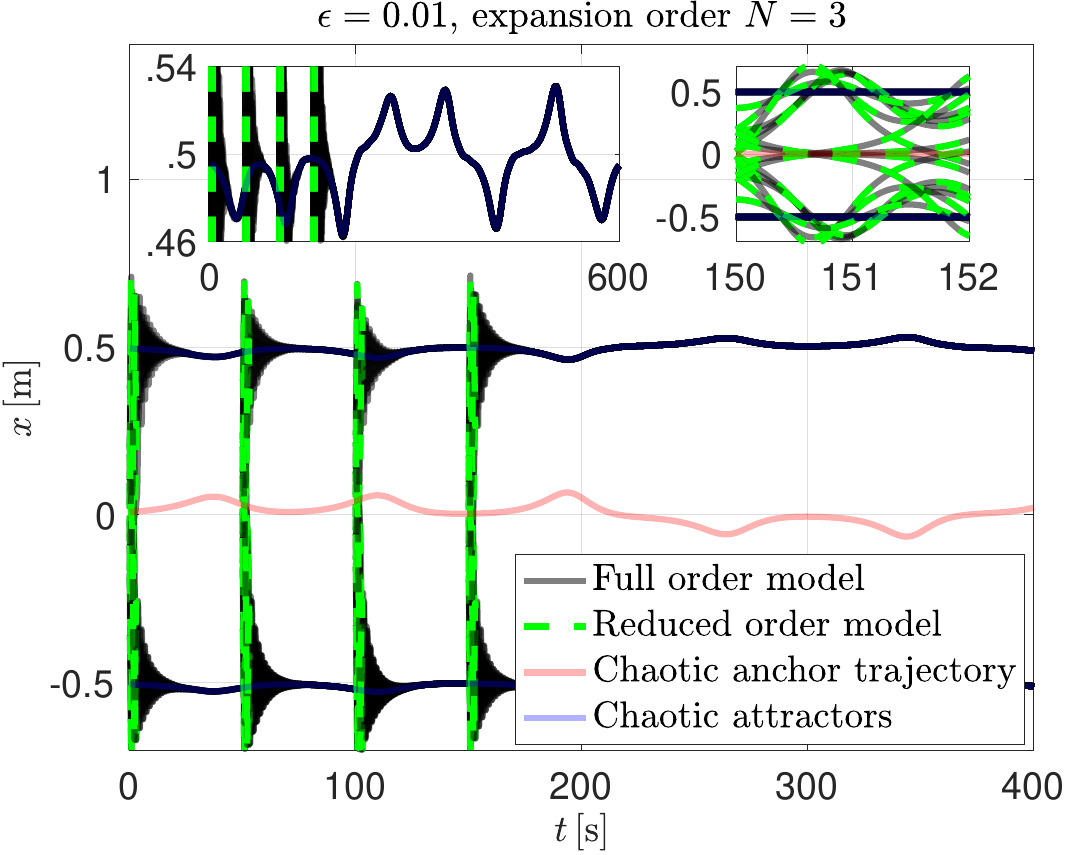}\label{fig:slow_mixed_x}}
\par\end{centering}
\centering{}\subfloat[]{\includegraphics[width=0.5\columnwidth]{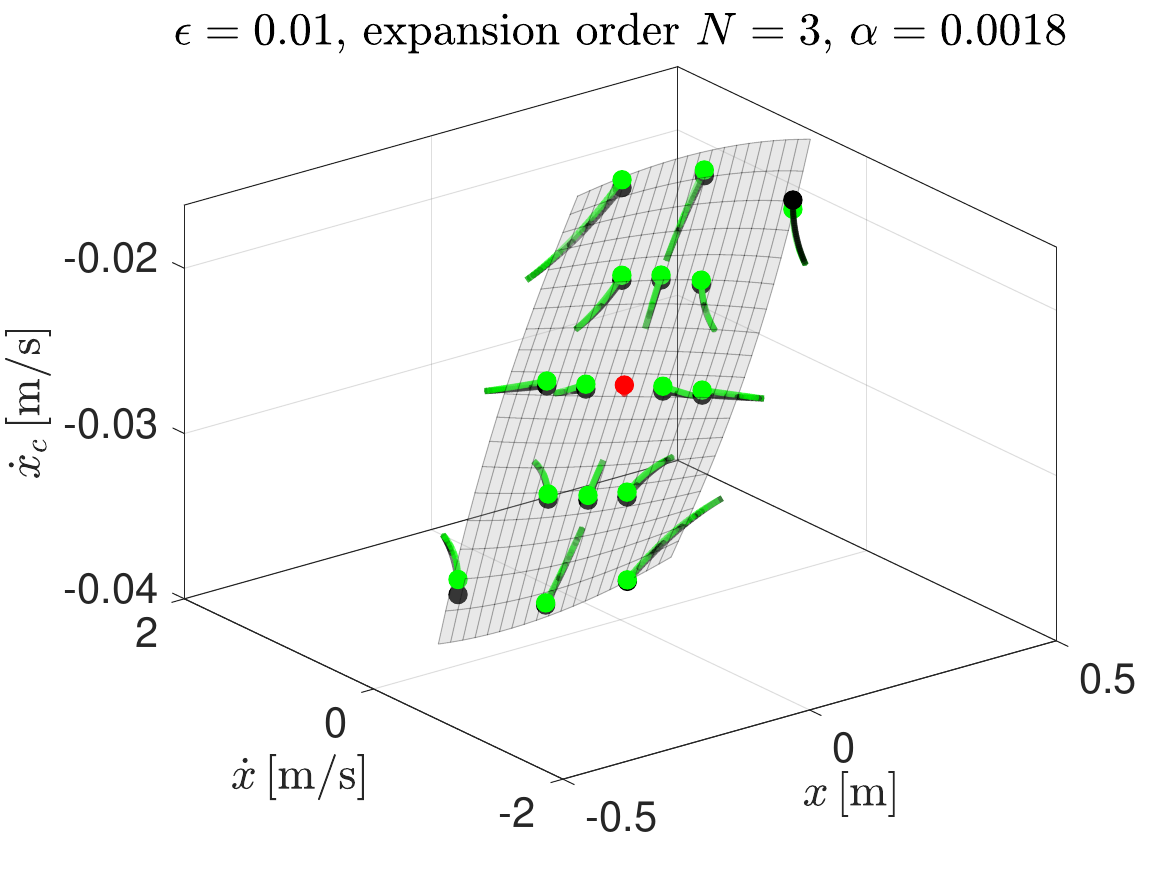}\label{fig:slow_mixed_snap}}\subfloat[]{\includegraphics[width=0.5\columnwidth]{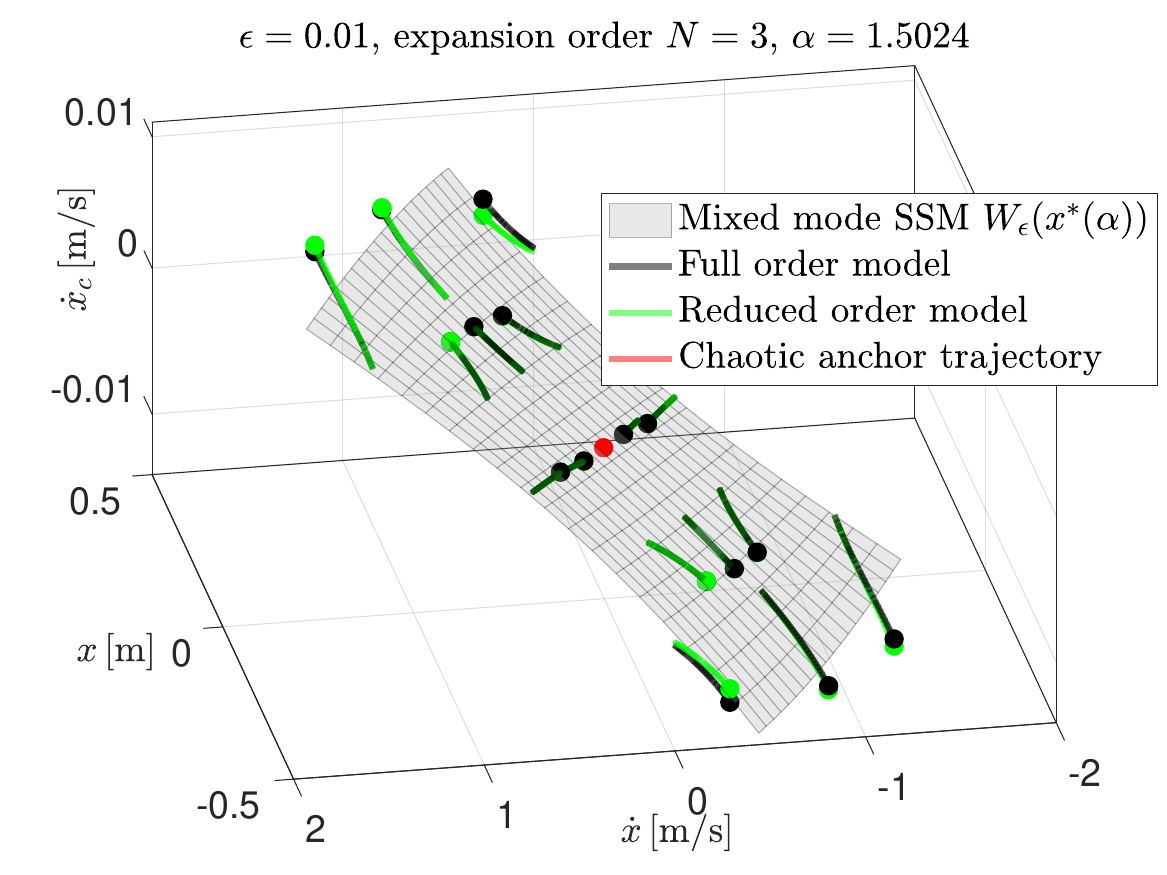}}\caption{(a)\textendash (c) Same as Fig. \ref{fig:snap_mixed_mode_weak}, but
now for moderately slow chaotic forcing with non-dimensional forcing
speed measure $r_{s}=1.22$. See Movie 4 in the Supplementary Material
for the full evolution. (Multimedia available online)}
\end{figure}

\section{Conclusions}

In this paper, we have derived existence results for spectral submanifolds
(SSMs) in non-autonomous dynamical systems that are either weakly
non-autonomous or slowly varying (adiabatic). While our exact proofs
for these SSMs results only cover weak enough or slow enough time
dependence, the invariant manifolds we obtain are structurally stable
and hence are persistent away from these limits. Under further nonresonance
conditions, the manifolds and their reduced dynamics also admit asymptotic
expansions up to any finite order for which we present recursive formulas.
These formulas suggest that one of the non-autonomous SSMs is as smooth
as the original dynamical system, just as primary SSMs are known to
be in the autonomous case reviewed in the Introduction. MATLAB implementations
of our recursive formulas for the examples treated here are available
from \citet{kaundinya23}.

In our results, weakly forced SSMs are guaranteed to exist under uniformly
bounded forcing. This restriction is expected because unbounded perturbations
to an autonomous limit of a dynamical system will generally wipe out
any structure identified in that limit. For adiabatic SSMs, the uniformity
of the forcing magnitude is replaced by the uniformity of the strength
of hyperbolicity along the anchor trajectory of the SSM in the limit
of frozen time. 

In the weakly forced case, the SSMs emanate from unique, uniformly
bounded hyperbolic anchor trajectories. Our expansions for these anchor
trajectories are of independent interest as they provide formal approximations
of the \emph{generalized steady state} response of any nonlinear system
subject to moderate but otherwise arbitrary time-dependent forcing.
In finite-element simulations under aperiodic forcing, such steady
states have been assumed to exist but their computation tends to involve
lengthy simulations of randomly chosen initial conditions until they
have reached a (somewhat vaguely defined) statistical steady state. 

Even with today's computational power, such simulations are still
prohibitively long due to the small damping of typical structural
materials, the high degrees of freedom of the models used, and the
costly evaluations of nonlinearities arising from complex geometries
and multi-physics. With the explicit recursive formulas we obtained
here, one can now directly approximate these generalized steady states
without lengthy simulations.

For these asymptotic formulas to converge, the forcing does not actually
have to be uniformly bounded: it may grow temporally unbounded in
a compact neighborhood of the origin as long as the improper integrals
in the statements of Theorems 2 and 4 converge. To the best of our
knowledge, such explicit, readily computable asymptotic expansions
for time-dependent steady states have been unavailable in the literature
even for time-periodic or time-quasiperiodic forcing, let alone time-aperiodic
forcing. Weak or slow time-periodic and time-quasi-periodic forcing
is also covered by these formulas as special cases. 

Our existence results for SSMs assume temporal smoothness for the
dynamical system and hence do not strictly cover the case of random
forcing with continuous paths. Our approximation formulas, however,
are formally applicable (i.e., the improper integrals in them converge)
for much broader forcing types, including mildly exponentially growing
or temporally discontinuous forcing. Therefore, the results in this
paper provide formal nonlinear model reduction formulas that can be
implemented as approximations for all forcing types arising in practice. 

The simple examples considered here already illustrate the ability
of our asymptotic formulas to work under various forcing types, including
chaotic and discontinuous forcing. Our weakly non-autonomous SSM theory
is expected to be helpful in model reduction for structural vibrations
problems in which so far time-periodic and time-quasiperiodic SSMs
have been constructed. An application of our adiabatic SSM theory
is already underway in the model-predictive control of soft robots,
wherein the target trajectories of the robot generally have a much
slower time scale than the highly damped robot itself.

Going from the simple, illustrative examples treated here to finite-element
problems with temporally aperiodic forcing will require no further
theoretical development. Indeed, the existence and smoothness results
derived in this paper are valid in arbitrarily high (finite) dimensions.
However, a computational reformulation will be required to make these
results directly applicable to second-order mechanical systems without
the need to convert them to first-order systems with diagonalized
linear parts. The development of such a reformulation using the approach
of \citet{jain2022} is currently underway.

\vskip 1 true cm

\textbf{Acknowledgements}

\vskip 0.5 true cm

We are grateful to Christian Pötzsche for very helpful and detailed
explanations and references on the continuity of the dichotomy spectrum
for non-autonomous differential equations. We are also grateful to
Martin Rasmussen and Peter Kloeden for helpful references on non-autonomous
invariant manifolds.

\appendix

\section{Proof of Theorems \ref{thm:x^* under conditions} and \ref{thm: expansion for x^*}}

\setcounter{equation}{0}
\numberwithin{equation}{section}
\renewcommand{\theequation}{\thesection\arabic{equation}} 

\subsection{Proof of Theorem \ref{thm:x^* under conditions}: Existence and regularity
of a hyperbolic anchor trajectory \label{subsec:proof of existence of anchor trajectory}}

We first recall a result of \citet{palmer73}, reformulated from \citet{hartman64},
for a general non-autonomous ODE of the form
\begin{equation}
\dot{x}=A(t)x+f(x,t,p),\label{eq:perturbed system Palmer}
\end{equation}
where $A(t)$ and $f(x,t,p)$ are $C^{0}$ functions of their arguments;
$\left|f(x,t,p)\right|\leq\mu$ is assumed to hold for a constant
$\mu>0$ and for all $x\in\mathbb{R}^{n}$ and $t\in\mathbb{R}$;
$f(\,\cdot\,,t,p)$ admits a global Lipschitz constant $L>0$ for
all times $t$; and $p\in\mathbb{R}^{m}$ is a parameter vector.

Assume that the linearization 
\begin{equation}
\dot{x}=A(t)x\label{eq:homogeneous linear nonautonmous ODE}
\end{equation}
of system (\ref{eq:perturbed system Palmer}) has an \emph{exponential
dichotomy}, i.e., there exists a fundamental matrix solution $\Phi(t)$
of (\ref{eq:homogeneous linear nonautonmous ODE}), constants $K,\kappa>0$
and a projection $P$ (with $P^{2}=P)$ such that 
\begin{align}
\left|\Phi(t)P\Phi^{-1}(s)\right| & \leq Ke^{\kappa\left(t-s\right)},\quad t\geq s,\nonumber \\
\left|\Phi(t)\left(I-P\right)\Phi^{-1}(s)\right| & \leq Ke^{\kappa\left(t-s\right)},\quad s\geq t.\label{eq:dichotomies}
\end{align}
 This condition guarantees that the $x=0$ solution of the linearized
system (\ref{eq:homogeneous linear nonautonmous ODE}) is hyperbolic,
i.e., has well-defined stable and/or unstable subbundles (no neutrally
stable directions). Finally, assume that
\begin{equation}
4LK\leq\kappa.\label{eq:assumption for palmer's result}
\end{equation}

Under these assumptions, there exists a unique, globally bounded solution
$x^{*}(t;p)$ of the nonlinear system (\ref{eq:perturbed system Palmer})
that satisfies\textbf{ 
\begin{equation}
\left|x^{*}(t;p)\right|\leq2K\mu/\kappa,\quad t\in\mathbb{R},\label{eq:bound on x^star}
\end{equation}
}as shown in Lemma 1 of \citet{palmer73}). Furthermore, $x^{*}(t;p)$
is continuous in the parameter $p$. (If $f$ is smooth in its arguments,
then the continuity of $x^{*}(t;p)$ in $p$ can be strengthened to
smooth dependence on $p$ using more general results for the persistence
of non-compact, normally hyperbolic invariant manifolds; see \citet{eldering13}).
Such a strengthened version, however, would not be specific enough
about the norm of $\left|x^{*}(t;p)\right|$ to the extent given in
(\ref{eq:bound on x^star}).) Note that $x^{*}(t;p)$ takes over the
role of $x=0$ as a unique, uniformly bounded solution under nonzero
$f(x,t,p)$. The result of \citet{palmer73} quoted above makes no
assumption on $f(x,t,p)$ containing only nonlinear terms, and hence
$f(x,t,p)$ is allowed to be an arbitrary perturbation to the linear
ODE (\ref{eq:homogeneous linear nonautonmous ODE}), as long as it
is uniformly bounded and uniformly Lipschitz. 

The uniform boundedness conditions (\ref{eq:assumption for palmer's result})-(\ref{eq:bound on x^star})
will not be satisfied in realistic applications. Nevertheless, one
can still use the above results in such applications using smooth
cut-off functions (see, e.g., \citet{fenichel71}) that make $f(x,t,p)$
vanish for all $t\in B_{\delta}$ outside a small ball \textbf{$B_{\delta}\subset\mathbb{R}^{n}$}.
The latter classic tool from invariant manifold theory is, however,
only applicable here if the predicted hyperbolic trajectory $x^{*}(t;p)$
lies inside $B_{\delta}$, where the cut-off right-hand side and the
original right-hand side of (\ref{eq:perturbed system Palmer}) still
coincide. 

To ensure this, we consider a $\delta$-ball $B_{\delta}\subset\mathbb{R}^{n}$
around $x$ and define the uniform bound $\mu(\delta)$ and uniform
Lipschitz constant $L(\delta)$ as
\begin{equation}
\mu(\delta)=\max_{x\in B_{\delta}}\left|f(x,t,p)\right|,\qquad\left|f(x,t,p)-f(\tilde{x},t,p)\right|\leq L(\delta)\left|x-\tilde{x}\right|,\quad x,\tilde{x}\in B_{\delta}.\label{eq:ass1}
\end{equation}
Assume further that these bounds satisfy
\begin{equation}
\frac{2K\mu(\delta)}{\kappa}\leq\delta,\quad L(\delta)\leq\frac{\kappa}{4K},\label{eq:ass 2}
\end{equation}
which assures that assumption (\ref{eq:assumption for palmer's result})
holds and also that the predicted $x^{*}(t;p)$ falls in $B_{\delta}$
by the estimate (\ref{eq:bound on x^star}). Note that the inequalities
in (\ref{eq:ass 2}) will always hold for $\delta>0$ small enough
if $f(x,t,p)$ is only composed of terms that are nonlinear in $x$.
Indeed, in that case, we can select $\mu(\delta)=\mathcal{O}\left(\delta^{2}\right)$
and $L(\delta)=\mathcal{O}\left(\delta\right)$ for all $x\in B_{\delta}$,
and hence the inequalities in (\ref{eq:ass 2}) are satisfied for
small enough $\delta$, given that $K,\kappa>0$ do not depend on
$\delta$. 

We now apply the above refined persistence result in the setting of
the perturbed nonlinear system (\ref{eq: nonlinear system}) by letting
\begin{equation}
A(t)=A,\qquad f(x,t,p)=f_{0}(x)+f_{1}(x,t),\label{eq:assumption on A(t) and f}
\end{equation}
and assume a separate uniform Lipschitz constant $L_{1}(\delta)$
for $f_{1}(x,t)$ satisfying 
\[
\left|f_{1}(x,t)-f_{1}(\tilde{x},t)\right|\leq L_{1}(\delta)\left|x-\tilde{x}\right|,\quad x,\tilde{x}\in B_{\delta}.
\]
By our hyperbolicity assumption on the $x=0$ fixed point, we can
select the constant $\kappa>0$ so that
\begin{equation}
\kappa<\min_{1\leq j\leq n}\left|\mathrm{Re}\lambda_{j}\right|.\label{eq:alpha_choice}
\end{equation}
We can also bound within $B_{\delta}$ the nonlinear term $f_{0}$
and its spatial derivative as
\begin{equation}
\left|f_{0}(x,p)\right|\leq K_{1}\delta^{2},\qquad\left|\partial_{x}f_{0}(x)\right|\leq K_{2}\delta,\qquad x\in B_{\delta},\label{eq:constants}
\end{equation}
where $K_{1},K_{2}>0$ are appropriate constants. Additionally, we
can specifically choose $K_{2}=\max_{x\in B_{\delta}}\left|\partial_{x}f_{0}(x)\right|$
by the mean value theorem. Given these bounds, the constants in formulas
(\ref{eq:ass1}) can be estimated as
\begin{equation}
\mu(\delta)\leq K_{1}\delta^{2}+\left|f_{1}(x,t)\right|,\quad L(\delta)\leq L_{1}(\delta)+K_{2}\delta,\quad x\in B_{\delta}.\label{eq:smallness of w}
\end{equation}

Based on the inequalities (\ref{eq:smallness of w}), we see that
assumptions (\ref{eq:ass 2}) are satisfied if
\[
\frac{2K\left(K_{1}\delta^{2}+\left|f_{1}(x,t)\right|\right)}{\kappa}\leq\delta,\quad L_{1}(\delta)+K_{2}\delta\leq\frac{\kappa}{4K},\qquad x\in B_{\delta},\quad t\in\mathbb{R},
\]
or, equivalently, if
\begin{equation}
\left|f_{1}(x,t)\right|\leq\frac{\kappa\delta}{2K}-K_{1}\delta^{2},\qquad L_{1}(\delta)\leq\frac{\kappa}{4K}-K_{2}\delta,\qquad x\in B_{\delta},\quad t\in\mathbb{R}.\label{eq:bounds on perturbation}
\end{equation}
We stress again that these conditions do not have to hold globally
for all $x$, only for $x\in B_{\delta}$, but they must hold uniformly
within $B_{\delta}$ for all times. This follows because we can apply
the $C^{\infty}$ cutoff procedure mentioned above to the whole right-hand
side of (\ref{eq:perturbed system Palmer}), including $f_{1}(x,t)$.
Based on these considerations and using the inequalities in eq. (\ref{eq:constants}),
we obtain statement (i) of Theorem \ref{thm:x^* under conditions}. 

Statement (ii) of Theorem \ref{thm:x^* under conditions} follows
from the smooth dependence of $x^{*}(t;p)$ on the parameter vector
$p$, as shown by \citet{palmer73}.

\subsection{Proof of Theorem \ref{thm: expansion for x^*}: Approximation of
the anchor trajectory $x^{*}(t)$ \label{subsec:proof of approximation of x^*}}

First, we introduce a perturbation parameter $\epsilon\geq0$ and
rewrite the full non-autonomous system (\ref{eq: nonlinear system})
as

\begin{equation}
\dot{x}=Ax+f(x,t;\epsilon),\quad f(x,t;\epsilon)=f_{0}(x)+\epsilon\tilde{f}_{1}(x,t).\label{eq:original system 1}
\end{equation}
By the smooth dependence of the uniformly bounded hyperbolic solution
$x^{*}(t)$ on parameters, we can seek this solution in the form of
a Taylor expansion
\begin{equation}
x^{*}(t)=\sum_{\nu\geq1}\epsilon^{\nu}\xi_{\nu}(t),\label{eq:formal expansion for anchor point}
\end{equation}
with Taylor coefficients $\xi_{\nu}(t)$ that are uniformly bounded
in time. By statement (ii) of Theorem \eqref{thm:x^* under conditions},
such a formal Taylor expansion in $\epsilon$ is justified up to any
finite order, although may not necessarily converge.

Substitution of the expansion (\ref{eq:formal expansion for anchor point})
into eq. (\ref{eq:original system 1}) gives 
\begin{align}
\sum_{j\geq1}\epsilon^{j}\dot{\xi}_{j}(t) & =\sum_{j\geq1}\epsilon^{j}A\xi_{j}(t)+f\left(x_{\epsilon}(t),t;\epsilon\right).\label{eq:eq:epsilon expansion}
\end{align}
 Equating equal powers of $\epsilon$ on both sides yields the system
of differential equations
\begin{equation}
\dot{\xi}_{\nu}(t)=A\xi_{\nu}(t)+\frac{1}{\nu!}\left.D_{\epsilon}^{\nu}f\left(x_{\epsilon}(t),t;\epsilon\right)\right|_{\epsilon=0}.\label{eq:xi_j dot1}
\end{equation}
Note that the term formally containing $\xi_{\nu}(t)$ in $\left.D_{\epsilon}^{\nu}f\left(x_{\epsilon}(t),t;\epsilon\right)\right|_{\epsilon=0}$
is
\[
\nu!\left[D_{x}f_{0}(x_{\epsilon}(t))+\epsilon\tilde{f}_{1}\left(x_{\epsilon}(t),t\right)\right]_{\epsilon=0}\xi_{\nu}(t)\equiv0,
\]
because $f_{0}(x)=\mathcal{O}\left(\left|x\right|^{2}\right)$ and
$x_{0}(t)\equiv0$. Therefore, the right-hand side of eq. (\ref{eq:xi_j dot1})
only depends on $\xi_{1}(t),\ldots,\xi_{\nu-1}(t)$, which makes the
whole system of equations a recursively solvable sequence of inhomogeneous,
linear, constant-coefficient system of ODEs. The recursive solutions
are of the form
\begin{align}
\xi_{\nu}(t) & =e^{A\left(t-t_{0}\right)}\xi_{\nu}(t_{0})+\frac{1}{\nu!}\int_{t_{0}}^{t}e^{A\left(t-\tau\right)}\left.D_{\epsilon}^{\nu}f\left(\sum_{j=1}^{\nu-1}\epsilon^{j}\xi_{j}(\tau),\tau;\epsilon\right)\right|_{\epsilon=0}d\tau,\quad\nu\geq1.\label{eq:x_12-2}
\end{align}
 Assume first, for simplicity, that the $x=0$ fixed point is asymptotically
stable for $\epsilon=0$, and hence 
\begin{equation}
\mathrm{Re}\left[\mathrm{spect}\left(A\right)\right]\subset\left(-\infty,0\right).\label{eq: stable fixed point assumption}
\end{equation}
 In that case, by the uniform boundedness of $\xi_{1}(t_{0})$ for
all $t_{0}\in\mathbb{R}$, we can take the $t_{0}\to-\infty$ limit
in (\ref{eq:x_12-2}) to obtain
\begin{align}
\xi_{\nu}(t) & =\frac{1}{\nu!}\int_{-\infty}^{t}e^{A\left(t-\tau\right)}\left.D_{\epsilon}^{\nu}f\left(\sum_{j=1}^{\nu-1}\epsilon^{j}\xi_{j}(\tau),\tau;\epsilon\right)\right|_{\epsilon=0}d\tau,\quad\nu\geq1.\label{eq:x_12-2-1}
\end{align}

To obtain a more explicit recursive formula for $\xi_{\nu}(t)$ for
general $\nu$ that is suitable for direct numerical implementation,
we will use the multi-variate Faá di Bruno formula of \citet{constantine96}
for higher-order derivatives of composite functions.\emph{ }To recall
the general form of this formula, we first consider a general composite
function $H\colon\mathbb{R}^{p}\to\mathbb{R}$, defined as 
\begin{equation}
H(x_{1},\ldots,x_{p})=f\left(g^{1}(x_{1},\ldots,x_{p}),\ldots,g^{m}(x_{1},\ldots,x_{p})\right),\label{eq:Hdef}
\end{equation}
and introduce the nonnegative multi-index $\mathbf{\boldsymbol{\nu}}$
and related notation as
\[
\mathbf{\boldsymbol{\nu}}=\left(\nu_{1},\ldots,\nu_{p}\right)\in\mathbb{N}^{p},\quad\left|\mathbf{\boldsymbol{\nu}}\right|=\sum_{i=1}^{p}\nu_{i},\quad\mathbf{\boldsymbol{\nu}}!=\prod_{i=1}^{p}\left(\nu_{i}!\right),\quad D_{\mathbf{x}}^{\mathbf{\boldsymbol{\nu}}}=\frac{\partial^{\left|\mathbf{\boldsymbol{\nu}}\right|}}{\partial x_{1}^{\nu_{1}}\cdots\partial x_{p}^{\nu_{p}}}.
\]
 We also introduce an ordering relation on $\mathbb{N}^{p}$ for arbitrary
$\mathbf{\boldsymbol{\mu}},\mathbf{\boldsymbol{\nu}}\in\mathbb{N}^{p}$
such that $\mathbf{\boldsymbol{\nu}}\prec\mathbf{\boldsymbol{\mu}}$
holds provided one of the following is satisfied: 

(i) $\left|\mathbf{\boldsymbol{\mu}}\right|<\left|\mathbf{\boldsymbol{\nu}}\right|$

(ii) $\left|\mathbf{\boldsymbol{\mu}}\right|=\left|\mathbf{\boldsymbol{\nu}}\right|$
and $\mu_{1}<\nu_{1}$ or

(iii) $\left|\mathbf{\boldsymbol{\mu}}\right|=\left|\mathbf{\boldsymbol{\nu}}\right|$
and $\mu_{1}=\nu_{1},\ldots\mu_{k}=\nu_{k},\mu_{k+1}<\nu_{k+1}$ for
some $1\leq k<p.$ 

Finally, for any $\mathbf{\boldsymbol{\mu}},\boldsymbol{\gamma}\in\mathbb{N}^{p}$
and $s\in\mathbb{N}^{+}$, we define the index set 
\[
p_{s}\left(\mathbf{\boldsymbol{\nu}},\boldsymbol{\gamma}\right)=\left\{ \left(\mathbf{k}_{1},\ldots,\mathbf{k}_{s},\boldsymbol{\ell}_{1},\ldots,\boldsymbol{\ell}_{s}\right):\mathbf{k}_{i}\in\mathbb{N}^{m}-\left\{ \mathbf{0}\right\} ,\boldsymbol{\ell}_{i}\in\mathbb{N}^{p},\mathbf{0}\prec\boldsymbol{\ell}_{1}\prec\cdots\prec\boldsymbol{\ell}_{s},\sum_{i=1}^{s}\mathbf{k}_{i}=\boldsymbol{\gamma},\sum_{i=1}^{s}\left|\mathbf{k}_{i}\right|\boldsymbol{\ell}_{i}=\mathbf{\boldsymbol{\nu}}\right\} .
\]
 With this notation, \citet{constantine96} prove the following multi-variate
version of the Faá di Bruno formula:

\begin{equation}
D_{\mathbf{x}}^{\mathbf{\boldsymbol{\nu}}}H\left(\mathbf{x}^{0}\right)=\sum_{1\leq\left|\boldsymbol{\gamma}\right|\leq\left|\mathbf{\boldsymbol{\nu}}\right|}D_{\mathbf{y}}^{\mathbf{\boldsymbol{\gamma}}}f\left(\mathbf{y}^{0}\right)\sum_{s=1}^{\left|\mathbf{\boldsymbol{\nu}}\right|}\sum_{p_{s}\left(\mathbf{\boldsymbol{\nu}},\mathbf{\boldsymbol{\gamma}}\right)}\mathbf{\boldsymbol{\nu}}!\prod_{j=1}^{s}\frac{\prod_{i=1}^{m}\left[D_{\mathbf{x}}^{\mathbf{\boldsymbol{\ell}}_{j}}g^{i}(\mathbf{x}^{0})\right]^{k_{ji}}}{\left(\mathbf{k}_{j}\right)!\left[\left(\mathbf{\boldsymbol{\ell}}_{j}\right)!\right]^{\left|\mathbf{k}_{j}\right|}},\label{eq:general Faa di Bruno formula}
\end{equation}
where $k_{ji}$ denoted the $i^{th}$ element of the multi-index $\mathbf{k}_{j}\in\mathbb{N}^{m}-\left\{ \mathbf{0}\right\} $. 

Of relevance to us is the case $p=1$, wherein we have $H=f_{q}$,
$m=n,$ $g^{i}=\sum_{j\geq1}\epsilon^{j}\xi_{j}^{i}(\tau)$, $i=1,\ldots,n$;
$\mathbf{x}=\epsilon\in\mathbb{R},$ $\mathbf{x}^{0}=0\in\mathbb{R}$,
$\mathbf{\boldsymbol{\nu}}=\nu\in\mathbb{N}$, $\boldsymbol{\ell}_{i}=\ell_{i}\in\mathbb{N}$,
and $\mathbf{y}=x\in\mathbb{R}^{n}$. In that case, we can write
\begin{equation}
H(\epsilon)=f_{q}\left(\sum_{j=1}^{\nu-1}\epsilon^{j}\xi_{j}^{1}(\tau),\ldots,\sum_{j=1}^{\nu-1}\epsilon^{j}\xi_{j}^{n}(\tau),\tau;\epsilon\right),\quad q=1,\ldots,n.\label{eq:H(epsilon)}
\end{equation}
 Note, however, that $H(\epsilon)$ also has explicit dependence on
$\epsilon$ and hence is not exactly of the form (\ref{eq:Hdef}).
To address this issue, we also observe that
\begin{align*}
\frac{d^{\nu}}{d\epsilon^{\nu}}H\left(0\right) & =\frac{d^{\nu}}{d\epsilon^{\nu}}H_{0}\left(0\right)+\nu\frac{d^{\nu-1}}{d\epsilon^{\nu-1}}H_{1}\left(0\right),\\
H_{0}\left(\epsilon\right) & =f_{0q}\left(\sum_{j=1}^{\nu-1}\epsilon^{j}\xi_{j}^{1}(\tau),\ldots,\sum_{j=1}^{\nu-1}\epsilon^{j}\xi_{j}^{n}(\tau)\right),\\
H_{1}\left(\epsilon\right) & =\tilde{f}_{1q}\left(\sum_{j=1}^{\nu-1}\epsilon^{j}\xi_{j}^{1}(\tau),\ldots,\sum_{j=1}^{\nu-1}\epsilon^{j}\xi_{j}^{n}(\tau),\tau\right),
\end{align*}
and hence $H_{0}\left(\epsilon\right)$ and $H_{1}\left(\epsilon\right)$
are individually of the form (\ref{eq:Hdef}). Applied to these two
functions, the formula (\ref{eq:general Faa di Bruno formula}) simplifies
to
\begin{align*}
\frac{d^{\nu}}{d\epsilon^{\nu}}H_{0}\left(0\right) & =\left.D_{\epsilon}^{\nu}f_{0q}\left(\sum_{j=1}^{\nu-1}\epsilon^{j}\xi_{j}^{1}(\tau),\ldots,\sum_{j=1}^{\nu-1}\epsilon^{j}\xi_{j}^{n}(\tau)\right)\right|_{\epsilon=0}\\
 & =\sum_{1\leq\left|\boldsymbol{\gamma}\right|\leq\nu}D_{x}^{\boldsymbol{\gamma}}f_{q}\left(0\right)\sum_{s=1}^{\nu}\sum_{p_{s}\left(\nu,\boldsymbol{\gamma}\right)}\nu!\prod_{j=1}^{s}\frac{\prod_{i=1}^{n}\left[\left(\ell_{j}\right)!\xi_{\ell_{j}}^{i}(\tau)\right]^{k_{ji}}}{\left(\mathbf{k}_{j}\right)!\left[\left(\ell_{j}\right)!\right]^{\left|\mathbf{k}_{j}\right|}}\\
 & =\sum_{1\leq\left|\boldsymbol{\gamma}\right|\leq\nu}D_{x}^{\boldsymbol{\gamma}}f_{0q}\left(0\right)\sum_{s=1}^{\nu}\sum_{p_{s}\left(\nu,\boldsymbol{\gamma}\right)}\nu!\prod_{j=1}^{s}\frac{\prod_{i=1}^{n}\left[\xi_{\ell_{j}}^{i}(\tau)\right]^{k_{ji}}}{\prod_{i=1}^{n}k_{ji}!},\quad q=1,\ldots,n,\\
\frac{d^{\nu-1}}{d\epsilon^{\nu-1}}H_{1}\left(0\right) & =\left.D_{\epsilon}^{\nu-1}\tilde{f}_{1q}\left(\sum_{j=1}^{\nu-1}\epsilon^{j}\xi_{j}^{1}(\tau),\ldots,\sum_{j=1}^{\nu-1}\epsilon^{j}\xi_{j}^{n}(\tau),\tau\right)\right|_{\epsilon=0}\\
 & =\sum_{1\leq\left|\boldsymbol{\gamma}\right|\leq\nu-1}D_{x}^{\mathbf{\boldsymbol{\gamma}}}\tilde{f}_{1q}\left(0,\tau\right)\sum_{s=1}^{\nu-1}\sum_{p_{s}\left(\nu-1,\boldsymbol{\gamma}\right)}\left(\nu-1\right)!\prod_{j=1}^{s}\frac{\prod_{i=1}^{n}\left[\left(\ell_{j}\right)!\xi_{\ell_{j}}^{i}(\tau)\right]^{k_{ji}}}{\left(\mathbf{k}_{j}\right)!\left[\left(\ell_{j}\right)!\right]^{\left|\mathbf{k}_{j}\right|}}.
\end{align*}

Substitution of these expressions into formula (\ref{eq:x_12-2-1})
then gives the final recursive formulas
\begin{align}
\xi_{\nu}(t) & =\sum_{1\leq\left|\boldsymbol{\gamma}\right|\leq\nu}\,\sum_{s=1}^{\nu}\sum_{p_{s}\left(\nu,\boldsymbol{\gamma}\right)}\int_{-\infty}^{t}e^{A\left(t-\tau\right)}\left[\frac{\partial^{\left|\boldsymbol{\gamma}\right|}f_{0}\left(0\right)}{\partial x_{1}^{\gamma_{1}}\cdots\partial x_{n}^{\gamma_{n}}}\prod_{j=1}^{s}\frac{\prod_{i=1}^{n}\left[\xi_{\ell_{j}}^{i}(\tau)\right]^{k_{ji}}}{\prod_{i=1}^{n}k_{ji}!}\right]d\tau\label{eq:x_12-2-1-1}\\
 & +\sum_{1\leq\left|\boldsymbol{\gamma}\right|\leq\nu-1}\,\sum_{s=1}^{\nu-1}\sum_{p_{s}\left(\nu-1,\boldsymbol{\gamma}\right)}\int_{-\infty}^{t}e^{A\left(t-\tau\right)}\left[\frac{\partial^{\left|\boldsymbol{\gamma}\right|}\tilde{f}_{1}\left(0,\tau\right)}{\partial x_{1}^{\gamma_{1}}\cdots\partial x_{n}^{\gamma_{n}}}\prod_{j=1}^{s}\frac{\prod_{i=1}^{n}\left[\xi_{\ell_{j}}^{i}(\tau)\right]^{k_{ji}}}{\prod_{i=1}^{n}k_{ji}!}\right]d\tau,\quad\nu\geq1,\nonumber 
\end{align}
where $k_{ji}$ is the $i^{th}$ component of the integer vector \textbf{$\mathbf{k}_{j}\in\mathbb{N}^{n}-\left\{ \mathbf{0}\right\} $
}appearing\textbf{ }in the index set
\[
p_{s}\left(\nu,\boldsymbol{\gamma}\right)=\left\{ \left(\mathbf{k}_{1},\ldots,\mathbf{k}_{s},\ell_{1},\ldots,\ell_{s}\right):\mathbf{k}_{i}\in\mathbb{N}^{n}-\left\{ \mathbf{0}\right\} ,\ell_{i}\in\mathbb{N},0<\ell_{1}<\cdots<\ell_{s},\sum_{i=1}^{s}\mathbf{k}_{i}=\boldsymbol{\gamma},\sum_{i=1}^{s}\left|\mathbf{k}_{i}\right|\ell_{i}=\nu\right\} .
\]
 We have also used the notational convection $\frac{\partial^{\left|\boldsymbol{\gamma}\right|}}{\partial x_{1}^{\gamma_{1}}\cdots\partial x_{n}^{\gamma_{n}}}=I$
for $\boldsymbol{\gamma}=0$. 

Substitution of (\ref{eq:x_12-2-1-1}) into the expansion (\ref{eq:formal expansion for anchor point})
gives
\begin{align}
x^{*}(t) & =\sum_{\nu\geq1}x_{\nu}(t),\label{eq:x_nu stable case}\\
x_{\nu}(t)= & \sum_{1\leq\left|\boldsymbol{\gamma}\right|\leq\nu}\,\sum_{s=1}^{\nu}\sum_{p_{s}\left(\nu,\boldsymbol{\gamma}\right)}\int_{-\infty}^{t}e^{A\left(t-\tau\right)}\left[\frac{\partial^{\left|\boldsymbol{\gamma}\right|}f_{0}\left(0\right)}{\partial x_{1}^{\gamma_{1}}\cdots\partial x_{n}^{\gamma_{n}}}\frac{\prod_{i=1}^{n}\left[x_{\ell_{j}}^{i}(\tau)\right]^{k_{ji}}}{\prod_{i=1}^{n}k_{ji}!}\right]d\tau\nonumber \\
 & +\sum_{1\leq\left|\boldsymbol{\gamma}\right|\leq\nu-1}\,\sum_{s=1}^{\nu-1}\sum_{p_{s}\left(\nu-1,\boldsymbol{\gamma}\right)}\int_{-\infty}^{t}e^{A\left(t-\tau\right)}\left[\frac{\partial^{\left|\boldsymbol{\gamma}\right|}f_{1}\left(0,\tau\right)}{\partial x_{1}^{\gamma_{1}}\cdots\partial x_{n}^{\gamma_{n}}}\prod_{j=1}^{s}\frac{\prod_{i=1}^{n}\left[x_{\ell_{j}}^{i}(\tau)\right]^{k_{ji}}}{\prod_{i=1}^{n}k_{ji}!}\right]d\tau,\quad\nu\geq1.\nonumber 
\end{align}
These results cover anchor trajectories of like-mode SSMs of stable
hyperbolic fixed points but do not cover anchor trajectories for mixed-mode
SSMs of such fixed points or anchor trajectories of like-mode SSMs
of unstable hyperbolic fixed points.

We now extend formula (\ref{eq:x_12-2-1-1}) to mixed-mode SSMs by
weakening the stability assumption (\ref{eq: stable fixed point assumption})
back to our general hyperbolicity assumption (\ref{eq:hyperbolicity assumption}).
In this case, the matrix $A$ has an exponential dichotomy, as described
by the inequalities (\ref{eq:dichotomy for autonomous ODE}). The
dichotomy exponent $\kappa>0$ can be selected as in (\ref{eq:alpha_choice}).
We use the matrix $T$ defined in \eqref{eq:T definition} in the
coordinate transformation
\[
x=Ty,\quad y=\left(y^{s},y^{u}\right)^{\mathrm{T}}\in\mathbb{R}^{s}\times\mathbb{R}^{u},
\]
which brings system system (\ref{eq: nonlinear system}) to the form
\begin{align*}
\dot{y}^{s} & =A^{s}y^{s}+f^{s}(y,t),\\
\dot{y}^{u} & =A^{u}y^{u}+f^{u}(y,t),
\end{align*}
 with $\left(f^{s}(y,t),f^{u}(y,t)\right)^{\mathrm{T}}=T^{-1}f(Ty,t)$
and
\begin{equation}
\mathrm{Re}\left[\mathrm{spect}\left(A^{s}\right)\right]\subset\left(-\infty,0\right),\quad\mathrm{Re}\left[\mathrm{spect}\left(A^{u}\right)\right]\subset\left(0,\infty\right).\label{eq:two spectra}
\end{equation}
 In these new coordinates, with the notation 
\[
\left(\begin{array}{c}
\hat{\xi}_{\nu}^{s}(t)\\
\hat{\xi}_{\nu}^{u}(t)
\end{array}\right)=T^{-1}\xi_{\nu}(t),
\]
the system of ODEs (\ref{eq:xi_j dot1}) becomes 
\begin{align}
\dot{\hat{\xi}}_{\nu}^{s}(t) & =A^{s}\hat{\xi}_{\nu}^{s}(t)+\frac{1}{\nu!}\left.D_{\epsilon}^{\nu}f^{s}\left(T^{-1}x_{\epsilon}(t),t\right)\right|_{\epsilon=0},\nonumber \\
\dot{\hat{\xi}}_{\nu}^{u}(t) & =A^{u}\hat{\xi}_{\nu}^{u}(t)+\frac{1}{\nu!}\left.D_{\epsilon}^{\nu}f^{u}\left(T^{-1}x_{\epsilon}(t),t\right)\right|_{\epsilon=0},\label{eq:xi_uj dot}
\end{align}
whose solutions can be written as 
\begin{align}
\hat{\xi}_{\nu}^{s}(t) & =e^{A^{s}\left(t-t_{0}^{s}\right)}\hat{\xi}_{\nu}^{s}(t_{0}^{s})+\frac{1}{\nu!}\int_{t_{0}^{s}}^{t}e^{A^{s}\left(t-\tau\right)}\left.D_{\epsilon}^{\nu}f^{s}\left(T^{-1}x_{\epsilon}(\tau),\tau\right)\right|_{\epsilon=0}d\tau,\nonumber \\
\hat{\xi}_{\nu}^{u}(t) & =e^{A^{u}\left(t-t_{0}^{u}\right)}\hat{\xi}_{\nu}^{u}(t_{0}^{u})+\frac{1}{\nu!}\int_{t_{0}^{u}}^{t}e^{A^{u}\left(t-\tau\right)}\left.D_{\epsilon}^{\nu}f^{u}\left(T^{-1}x_{\epsilon}(\tau),\tau\right)\right|_{\epsilon=0}d\tau.\label{eq:xi_uj dot-1}
\end{align}
 Based on the spectral properties of $A^{s}$ and $A^{u}$ listed
in (\ref{eq:two spectra}) and the uniform boundedness of $\hat{\xi}_{\nu}^{s,u}(t_{0}^{s,u})$,
we can take the limits $t_{0}^{s}\to-\infty$ and $t_{0}^{u}\to+\infty$
to obtain 
\begin{align}
\hat{\xi}_{\nu}^{s}(t) & =\frac{1}{\nu!}\int_{-\infty}^{t}e^{A^{s}\left(t-\tau\right)}\left.D_{\epsilon}^{\nu}f^{s}\left(T^{-1}x_{\epsilon}(\tau),\tau\right)\right|_{\epsilon=0}d\tau,\nonumber \\
\hat{\xi}_{\nu}^{u}(t) & =-\frac{1}{\nu!}\int_{t}^{\infty}e^{A^{u}\left(t-\tau\right)}\left.D_{\epsilon}^{\nu}f^{u}\left(T^{-1}x_{\epsilon}(\tau),\tau\right)\right|_{\epsilon=0}d\tau.\label{eq:xi_uj dot-1-1}
\end{align}
Therefore, introducing the Green's function (\ref{eq:Green's function definition}),
we can write the solution $\xi_{\nu}(t)$ in the $y$ coordinates
as 
\[
\xi_{\nu}(t)=\frac{1}{\nu!}\int_{-\infty}^{\infty}G(t-\tau)\left.D_{\epsilon}^{\nu}f\left(x_{\epsilon}(\tau),\tau\right)\right|_{\epsilon=0}d\tau,\quad\nu\geq1.
\]
 This then leads to the final formula (\ref{eq:x_nu hyperbolic case})
for a general hyperbolic trajectory $x^{*}(t)$, in analogy with formula
(\ref{eq:x_nu stable case}) for an asymptotically stable hyperbolic
trajectory.

\section{Proofs of Theorems \ref{thm:non-automous SSM} and \ref{thm:computation of non-automous SSM}}

\setcounter{equation}{0}

\subsection{Proof of Theorem \ref{thm:non-automous SSM}: Existence of non-autonomous
spectral submanifolds \label{subsec:Proof of existence of SSM}}

Under the conditions of Theorem \ref{thm:x^* under conditions}, we
can shift coordinates by letting
\[
y=x-x^{*}(t),
\]
which transforms system (\ref{eq: nonlinear system}) to the form
\begin{align}
\dot{y} & =\left[A+\partial_{x}f(x^{*}(t),t)\right]y+g(y,t),\label{eq:perturbed, transformed}
\end{align}
 where
\begin{align}
g(y,t) & =f(x^{*}(t)+y)-f(x^{*}(t))-\partial_{x}f(x^{*}(t),t)y=\mathcal{O}\left(\left|y\right|^{2}\right).\label{eq:def of g(y,t)}
\end{align}

Next we introduce a perturbation parameter $0\leq\epsilon<\delta$
and the scalings
\begin{equation}
x=\epsilon\xi,\quad y=\epsilon\eta=\epsilon\xi-\epsilon\xi^{*}(t),\quad f_{0}(x)=\epsilon^{2}\tilde{f}_{0}\left(\xi;\epsilon\right),\quad f_{1}(x,t)=\epsilon\tilde{f}_{1}(x,t).\label{eq:epsilon scalings}
\end{equation}
These scalings imply
\begin{align*}
f_{0}(x^{*}(t)+y) & =\epsilon^{2}\tilde{f}_{0}(\xi^{*}(t)+\eta;\epsilon),\quad D_{x}f_{0}(x^{*}(t))=\epsilon D_{\xi}\tilde{f}_{0}(\xi^{*}(t);\epsilon),\\
\partial_{x}f_{1}(x^{*}(t),t) & =\epsilon\partial_{x}\tilde{f}_{1}(x^{*}(t),t),\quad g(y,t)=\epsilon^{2}\tilde{g}(\eta,t;\epsilon).
\end{align*}
With these expressions, we can rewrite eq. (\ref{eq:perturbed, transformed})
as
\begin{align*}
\dot{\eta} & =\left[A+\epsilon D_{\xi}\tilde{f}_{0}(\xi^{*}(t);\epsilon)+\epsilon\partial_{x}\tilde{f}_{1}(\epsilon\xi^{*}(t),t)\right]\eta+\epsilon\tilde{g}(\eta,t;\epsilon),\quad\tilde{g}(\eta,t)=\mathcal{O}\left(\left|\eta\right|^{2}\right),\\
\dot{\epsilon} & =0.
\end{align*}
 We can further rewrite these equations as

\begin{align}
\dot{Y} & =\mathcal{A}(t)Y+\mathcal{G}(Y,t),\qquad\mathcal{G}(Y,t)=\mathcal{O}\left(\left|Y\right|^{2}\right),\qquad Y=\left(\begin{array}{c}
\eta\\
\epsilon
\end{array}\right)\in\mathbb{R}^{n+1},\label{eq:dot Y eq def}\\
\mathcal{A}(t) & =\left(\begin{array}{cc}
A & 0\\
0 & 0
\end{array}\right),\qquad\mathcal{G}(Y,t)=\left(\begin{array}{c}
\epsilon\partial_{x}\tilde{f}_{0}(\xi^{*}(t);\epsilon)\eta+\epsilon\partial_{x}\tilde{f}_{1}(\epsilon\xi^{*}(t),t)\eta+\epsilon\tilde{g}(\eta,t)\\
0
\end{array}\right).\label{eq:A G def}
\end{align}

While linearization results do not apply to eq. (\ref{eq:dot Y eq def})
due to the non-hyperbolicity of the origin, the invariant manifold
results of \citet{kloeden11} do apply. Their main condition stated
in our context is
\begin{equation}
\mathcal{G}(0,t)=0,\qquad\lim_{Y\to0}\sup_{t\in\mathbb{R}}\left|D_{Y}\mathcal{G}(Y,t)\right|=0,\label{eq:lim sup condition}
\end{equation}
of which the first one is already satisfied and the second one requires
the local uniform boundedness of the first derivatives of $\mathcal{G}(Y,t)$
in a neighborhood of $Y=0$. By inspection of formula (\ref{eq:A G def}),
we see that the second condition in (\ref{eq:lim sup condition})
is satisfied if the following four conditions are fulfilled:

(a) $\left|\partial_{x}f_{0}(x^{*}(t))\right|$ is uniformly bounded

(b) $\left|\partial_{x}f_{1}(x,t)\right|$ and $\left|\partial_{x}^{2}f_{1}(x,t)\right|$
are uniformly bounded in the $B_{\delta}$ neighborhood of $x=0$ 

(c) $\left|g(y,t)\right|$ is uniformly bounded in a neighborhood
of $y=0$ and 

(d) $\partial_{y}\left|g(y,t)\right|$ is uniformly bounded in a neighborhood
of $y=0$. 

Given the definition of $g(y,t)$ in (\ref{eq:def of g(y,t)}), conditions
(c)-(d) are satisfied if $\left|\partial_{x}f(x^{*}(t),t)\right|$
and
\begin{equation}
\left|\partial_{x}f(x^{*}(t)+y,t)-\partial_{x}f(x^{*}(t),t)\right|\label{eq:cond on f}
\end{equation}
are uniformly bounded in $B_{\delta}$. 

First, note that condition (a) is satisfied because $x^{*}(t)$ is
uniformly bounded. Second, since $x^{*}(t)$ stays in the $B_{\delta}$
ball under the conditions (\ref{eq:conditions for x^*}), we obtain
that $\left|\partial_{x}f_{0}(x^{*}(t)+y)-\partial_{x}f_{0}(x^{*}(t))\right|$
in uniformly bounded in a compact neighborhood of $y=0.$ Therefore,
for condition (\ref{eq:cond on f}) to hold (and hence to ensure that
(c)-(d) to hold), it remains to require that $\left|\partial_{x}f_{1}(x^{*}(t)+y)-\partial_{x}f_{1}(x^{*}(t))\right|$
remain uniformly bounded in a neighborhood of $y=0$. By the mean
value theorem, this holds if $\left|\partial_{x}^{2}f_{1}(x,t)\right|$
remains uniformly bounded in $B_{\delta}$, which is just condition
(b) above. Therefore, in addition to our assumptions in (\ref{eq:conditions for x^*}),
we need to assume condition (\ref{eq:Hessian of f_1}) for (a)-(d)
to hold. In summary, under the conditions, (\ref{eq:conditions for x^*})
and (\ref{eq:Hessian of f_1}), the local invariant manifold results
of \citet{kloeden11} are applicable near the $Y=0$ fixed point of
system (\ref{eq:dot Y eq def})-(\ref{eq:A G def}). 

Specifically, under conditions (\ref{eq:conditions for x^*}) and
(\ref{eq:Hessian of f_1}), the results in Section 6.3 of \citet{kloeden11}
apply to general non-autonomous systems of differential equations
of the form (\ref{eq:dot Y eq def}). These results in turn build
on classic result of \citet{sacker78} that establish the existence
of a dichotomy spectrum $\Sigma$ for $\mathcal{A}(t)$ that consists
up to $n+1$ disjoint closed intervals of the form
\begin{equation}
\Sigma=\left[a_{1},b_{1}\right]\cap\ldots\cap\left[a_{m},b_{m}\right],\quad m\leq m+1.\label{eq:dichotomy spectrum}
\end{equation}
(By the definition of the dichotomy spectrum, the linear system of
ODE $\dot{Y}=$ $\left(\mathcal{A}(t)-\lambda I\right)Y$ admits no
exponential dichotomy for any $\lambda\in\Sigma$.) Assuming $m\geq2$,
one can therefore select constants $\kappa_{j}^{+},\kappa_{-j}^{-}\in\mathbb{R}$
for $j=1,\ldots,m-1$ from the gaps among the closed spectral subintervals
in (\ref{eq:dichotomy spectrum}) as follows:
\[
b_{j}<\kappa_{j}^{+}<\kappa_{j}^{-}<a_{j+1},\quad j=1,\ldots,m-1,
\]
such that for appropriate constants $K>0$ and projection maps $P_{\pm}^{j}(t_{0})\in\mathbb{R}^{(n+1)\times\left(n+1\right)}$
with $P_{\pm}^{j}(t_{0})P_{\pm}^{j}(t_{0})=P_{\pm}^{j}(t_{0})$, the
normalized fundamental matrix solution $\Phi\left(t,t_{0}\right)$
of $\dot{Y}=\mathcal{A}(t)Y$satisfies
\begin{align}
\left\Vert \Phi\left(t,t_{0}\right)P_{-}^{j}(t_{0})\right\Vert  & \leq Ke^{\kappa_{j}^{+}(t-t_{0})},\quad t\geq t_{0},\nonumber \\
\left\Vert \Phi\left(t,t_{0}\right)P_{+}^{j}(t_{0})\right\Vert  & \leq Ke^{\kappa_{j}^{-}(t-t_{0})},\quad t\leq t_{0},\label{eq:exponential dichotomy}
\end{align}
for all $j=1,\ldots,m-1$. 

Then, by Theorem 6.10 and Remark 6.6 of \citet{kloeden11}, for each
$j=1,\ldots,m-1$, there exist two non-autonomous invariant manifolds
$W_{j}^{\pm}(t)$ for system (\ref{eq:dot Y eq def}) such that:
\begin{description}
\item [{(i)}] $\mathcal{W}_{j}^{\pm}(t)$ contain the $Y=0$ fixed point
of system (\ref{eq:dot Y eq def}). 
\item [{(ii)}] In a neighborhood $U\subset\mathbb{R}^{n+1},$ the manifolds
$\mathcal{W}_{j}^{\pm}(t)$ can described as graphs with the help
of $C^{0}$ functions $w_{j}^{\pm}\colon U\times\mathbb{\mathbb{R}}\to\mathbb{R}^{n+1}$
such that
\begin{equation}
\mathcal{W}_{j}^{\pm}(t)=\left\{ Y=Z+w_{j}^{\pm}(Z,t)\in U:\,\,Z\in\mathrm{range}\left(P_{\pm}^{j}(t)\right),\,\,\,\,w_{j}^{\pm}(Z,t)\in\mathrm{range}\left(P_{\mp}^{j}(t)\right)\right\} .\label{eq:wjdef}
\end{equation}
\item [{(iii)}] $w_{j}^{\pm}(Z,t)$ is uniformly $o$$\left(\left|Z\right|\right)$,
i.e., $\lim_{u\to0}\frac{\left\Vert w_{j}^{\pm}(Z,t)\right\Vert }{\left\Vert Z\right\Vert }=0$,
uniformly in $t$. 
\item [{(iv)}] If 
\begin{equation}
m_{j}^{+}\kappa_{j}^{+}<\kappa_{j}^{-}\label{eq:first spectral gap inequality}
\end{equation}
 holds for a positive integer $m_{j}^{+}$, then $\mathcal{W}_{j}^{+}(t)$
is of class $C^{m_{j}^{+}}.$ Similarly, if 
\begin{equation}
\kappa_{j}^{+}<m_{j}^{-}\kappa_{j}^{-}\label{eq:second spectral gap inequality}
\end{equation}
holds for a positive integer $m_{j}^{-}$ , then $\mathcal{W}_{j}^{-}(t)$
is of class $C^{m_{j}^{-}}.$
\item [{(v)}] If $\kappa_{j}^{+}<0$ then for all $\gamma>\kappa_{j}^{+}$,
we have $\sup_{t\geq0}\left\Vert Y(t,t_{0},Y_{0})\right\Vert e^{-\gamma t}<\infty$
for all $Y_{0}\in\mathcal{W}_{j}^{+}(t_{0})\cap U$ in a small enough
neighborhood $U$ of $Y=0$. Similarly, if $\kappa_{j}^{-}>0$ then
for all $\gamma<\kappa_{j}^{-}$, we have $\sup_{t\geq0}\left\Vert Y(t,t_{0},Y_{0})\right\Vert e^{-\gamma t}<\infty$
for all $Y_{0}\in\mathcal{W}_{j}^{+}(t_{0})\cap U$ in a small enough
neighborhood $U$ of $Y=0$.
\end{description}
Based on these results, further non-autonomous invariant manifolds
can be obtained by letting
\begin{equation}
\mathcal{W}_{i,j}(t)=\mathcal{W}_{i}^{+}(t)\cap\mathcal{W}_{j}^{-}(t),\quad1\leq j<i\leq m-1.\label{eq:W_i,j}
\end{equation}

Note that Theorem 6.10 and Remark 6.6 of \citet{kloeden11} assume
no hyperbolicity for the $Y=0$ fixed point of system (\ref{eq:dot Y eq def}),
which makes them applicable to the $Y=0$ fixed point of the extended
system (\ref{eq:dot Y eq def}). Specifically, the dichotomy spectrum
$\Sigma$ of the matrix $\mathcal{A}(t)$ defined in eq. (\ref{eq:A G def})
is discrete and given by
\[
\Sigma=\left\{ \mu_{1},\mu_{2},\ldots,\mu_{c}=0,\ldots,\mu_{m+1}\right\} =\mathrm{Re}\left[\mathrm{spect}\left(A\right)\right]\cup\left\{ 0\right\} .
\]
 As \citet{kloeden11} point out, the definition (\ref{eq:W_i,j})
yields a hierarchy of invariant manifolds, which in turn implies the
hierarchy of invariant manifolds shown in Table 1 for the original
system (\ref{eq: nonlinear system}) arising from this construct for
the extended system.
\begin{table}[H]
\centering{}%
\begin{tabular}{ccccccccc}
$\mathcal{W}_{1}^{+}(t)$ & $\subset$ & $\mathcal{W}_{2}^{+}(t)$ & $\subset$ & $\cdots$ & $\subset$ & $\mathcal{W}_{n-1}^{+}(t)$ & $\subset$ & $\mathbb{R}^{n+1}$\tabularnewline
 &  & $\cup$ &  & $\cup$ &  & $\cup$ &  & $\cup$\tabularnewline
 &  & $\mathcal{W}_{2,1}(t)$ & $\subset$ & $\cdots$ & $\subset$ & $\mathcal{W}_{n-1,1}(t)$ & $\subset$ & $\mathcal{W}_{1}^{-}(t)$\tabularnewline
 &  &  &  &  &  & $\cup$ &  & $\cup$\tabularnewline
 &  &  &  & $\ddots$ &  & $\vdots$ &  & $\vdots$\tabularnewline
 &  &  &  &  &  & $\mathcal{W}_{n-1,n-2}(t)$ & $\subset$ & $\mathcal{W}_{n-2}^{-}(t)$\tabularnewline
 &  &  &  &  &  &  &  & $\cup$\tabularnewline
 &  &  &  &  &  &  &  & $\mathcal{W}_{n-1}^{-}(t)$\tabularnewline
\end{tabular}\caption{Hierarchy of invariant manifolds for the extended system (\ref{eq:dot Y eq def}).\label{tab:Hierarchy-of-invariant-manifolds}}
\end{table}

For any index $j\leq c$, all invariant manifolds $\mathcal{W}_{i,j}^{+}(t)$
defined in formula (\ref{eq:W_i,j}) are graphs over a spectral subspace
that contains the $\mu_{c}=0$ center direction and hence they are
as smooth in $\epsilon$ as they are in the other variables along
that spectral subspace. This smoothness property is in turn shared
by all invariant manifolds of the form
\begin{align*}
\mathcal{W}_{i,c}(t) & =\mathcal{W}_{i}^{+}(t)\cap\mathcal{W}_{c}^{-}(t),\quad1\leq c<i\leq m-1.\\
\end{align*}

The $\epsilon=const.$ slices of $\mathcal{W}_{i}^{+}(t)$, $\mathcal{W}_{j}^{-}(t)$
and $\mathcal{W}_{i,j}(t)$ then provide similar invariant manifolds
$W_{i}^{+}(t;\epsilon)$, $W_{j}^{-}(t;\epsilon)$ and $W_{i,j}(t;\epsilon)$
of one less dimension along the $x^{*}(t;\epsilon)$ trajectory of
the original system (\ref{eq: nonlinear system}) for small enough
with $\epsilon>0$ under the assumed scaling (\ref{eq:epsilon scalings}).
The degree of smoothness of $W_{i}^{+}(t;\epsilon)$, $W_{j}^{-}(t;\epsilon)$
and $W_{i,j}(t;\epsilon)$ in $x$ and $\epsilon$ coincides with
the general degree of smoothness that can be inferred from the above
construct for their extended counterparts, $\mathcal{W}_{j}^{+}(t)$,
$\mathcal{W}_{j}^{-}(t)$ and $\mathcal{W}_{i,j}(t)$. Specifically,
the following non-autonomous invariant manifolds can be inferred from
Table \ref{tab:Hierarchy-of-invariant-manifolds} for system (\ref{eq: nonlinear system}):

(A) $W_{i}^{+}(t;\epsilon)$: the time-dependent smooth continuations
of the fastest decaying $i$ modes for any $1\leq i\leq n-1$. These
manifolds satisfy (\ref{eq:first spectral gap inequality}) for arbitrary
large $m$ and hence are as smooth as the original dynamical system.
They are generally $C^{0}$ in $\epsilon$ which can be improved to
$C^{r}$ for pseudo-stable manifolds, i.e., for all $W_{i}^{+}(t;\epsilon)$
with $\mu_{i}>0$.

(B) $W_{j}^{-}(t;\epsilon)$: the time-dependent smooth continuations
of the slowest decaying (or even growing) $n-1-j$ modes. If $\mu_{j-1}/\mu_{j}>m_{j}^{-}$
for some positive integer $m_{j}^{-}$, then these manifolds satisfy
(\ref{eq:second spectral gap inequality}) with $m_{j}^{-1}$ and
hence are of smoothness class $C^{m_{j}^{-}}$. For all $j$ with
$\mu_{j}<0$, $W_{j}^{-}(t;\epsilon)$ is also $C^{m_{j}^{-}}$ smooth
in $\epsilon$. Any manifold $W_{j}^{-}(t;\epsilon)$ with $\mu_{j}>0$
(fast unstable manifolds) can only be concluded to be $C^{0}$ in
$\epsilon$ from this argument.

(C) $W_{i,j}(t;\epsilon)$; the continuation of any (like-mode or
mixed-mode) spectral subspace that is $r_{i,j}-$normally hyperbolic.
In that case, $W_{i,j}(t;\epsilon)$ is also of smoothness class $C^{r_{i,j}}$
in $\epsilon$. 

\subsection{Proof of Theorem 4: Approximation of the SSM, $W\left(E,t\right)$,
anchored at $x_{\epsilon}^{*}(t)$ \label{subsec:  Proof of approximation of SSM}}

\subsubsection{Invariance equation}

Next, we derive an approximation for the non-autonomous SSMs anchored
to the hyperbolic trajectory $x^{*}(t)$. We perform a linear change
of coordinates 
\[
\left(\begin{array}{c}
u\\
v
\end{array}\right)=P^{-1}\left(x-x_{\epsilon}^{*}(t)\right),
\]
where $P=[e_{1},\ldots,e_{n}]\in\mathbb{C}^{n}$ contains the complex
eigenvectors corresponding to the ordered eigenvalues (\ref{eq:eigenvalue ordering})
of $A$, and $\left(u,v\right)\in\mathbb{C}^{d}\times\mathbb{C}^{n-d}$.
Rewriting the scaled version (\ref{eq:original system 1}) of system
\eqref{eq: nonlinear system} in these complex coordinates, we obtain
\begin{align}
\left(\begin{array}{c}
\dot{u}\\
\dot{v}
\end{array}\right) & =P^{-1}\left(\dot{x}-\dot{x}_{\epsilon}^{*}(t)\right)=P^{-1}\left[Ax+f(x,t)-\dot{x}_{\epsilon}^{*}(t)\right]\nonumber \\
 & =P^{-1}AP\left(\begin{array}{c}
u\\
v
\end{array}\right)+P^{-1}Ax_{\epsilon}^{*}(t)+P^{-1}\left[f\left(x_{\epsilon}^{*}(t)+P\left(\begin{array}{c}
u\\
v
\end{array}\right),t\right)-\dot{x}_{\epsilon}^{*}(t)\right]\nonumber \\
 & =\left(\begin{array}{cc}
A^{u} & 0\\
0 & A^{v}
\end{array}\right)\left(\begin{array}{c}
u\\
v
\end{array}\right)+\hat{f}(u,v,\epsilon;t),\label{eq:(u,v) system}
\end{align}
where
\begin{align}
\hat{f}(u,v,\epsilon;t) & =P^{-1}\left[f_{0}\left(x_{\epsilon}^{*}(t)+P\left(\begin{array}{c}
u\\
v
\end{array}\right)\right)+\epsilon\tilde{f}_{1}\left(x_{\epsilon}^{*}(t)+P\left(\begin{array}{c}
u\\
v
\end{array}\right),t\right)+Ax_{\epsilon}^{*}(t)-\dot{x}_{\epsilon}^{*}(t)\right],\label{eq:fhat definition}
\end{align}
 and 
\begin{align*}
 & A^{u}=\left(\begin{array}{ccc}
\lambda_{1} & 0 & 0\\
0 & \ddots & 0\\
0 & 0 & \lambda_{d}
\end{array}\right),\quad A^{v}=\left(\begin{array}{ccc}
\lambda_{d+1} & 0 & 0\\
0 & \ddots & 0\\
0 & 0 & \lambda_{n}
\end{array}\right).
\end{align*}
 Note that under the scaling (\ref{eq:original system 1}), the anchor
trajectory $x^{*}(t)$ becomes $\epsilon$-dependent, which is reflected
by our modified notation $x_{\epsilon}^{*}(t)$ for the same trajectory. 

By definition, we have
\[
x_{0}^{*}(t)\equiv0,
\]
 and

\[
f_{0}\left(x_{\epsilon}^{*}(t)\right)+Ax_{\epsilon}^{*}(t)-\dot{x}_{\epsilon}^{*}(t)+\epsilon\tilde{f}_{1}\left(x_{\epsilon}^{*}(t),t\right)\equiv0,
\]
therefore
\begin{equation}
D_{\epsilon}^{p}\left[f_{0}\left(x_{\epsilon}^{*}(t)\right)+Ax_{\epsilon}^{*}(t)-\dot{x}_{\epsilon}^{*}(t)+\epsilon\tilde{f}_{1}\left(x_{\epsilon}^{*}(t),t\right)\right]=D_{\epsilon}^{p}\hat{f}(0,0,\epsilon;t)\equiv0,\quad p\in\mathbb{N},\quad t\in\mathbb{R}.\label{eq:epsilon derivatives}
\end{equation}
By the definition of $\hat{f}$ in (\ref{eq:fhat definition}) and
by formula (\ref{eq:epsilon derivatives}) with $p=1$ , we also have
\begin{equation}
\hat{f}(0,0,0;t)=0,\quad D_{u}\hat{f}(0,0,0;t)=0,\quad D_{v}\hat{f}(0,0,0;t)=0,\quad D_{\epsilon}\hat{f}(0,0,0;t)=0.\label{eq:vanishing derivatives}
\end{equation}

For $\epsilon=0$, under the nonresonance conditions (\ref{eq:external non-resonance condition}),
we have a unique, primary spectral submanifold 
\begin{equation}
W_{0}\left(E\right)=\left\{ \left(u,v\right)\in U\subset\mathbb{R}^{n}:\,v=h_{0}(u)=\sum_{\left|\mathbf{k}\right|\geq0}h^{\mathbf{k}}u^{\mathbf{k}}\right\} ,\label{eq:W_0(E)}
\end{equation}
where $h_{0}(u)=\mathcal{O}\left(\left|u\right|^{2}\right)\in C^{r}$
defines the primary SSM as a smooth graph over $E$ with a quadratic
tangency to $E$ at $x=0$ (see \citet{haller16}). For $\epsilon>0$
small, under the conditions of statement (i) of Theorem \ref{thm:non-automous SSM},
this primary SSM persist in the form of a (generally non-unique) invariant
manifold
\begin{equation}
W_{\epsilon}\left(E,t\right)=\left\{ \left(u,v\right)\in U\subset\mathbb{R}^{n}:\,v=h_{\epsilon}(u,t)=\sum_{\left|\mathbf{k}\right|,p\geq0}h^{\mathbf{k}p}(t)u^{\mathbf{k}}\epsilon^{p}\right\} ,\label{eq:h double expansion}
\end{equation}
with coefficients $h^{\mathbf{k}p}(t)$ that are uniformly bounded
in $t$ within a $B_{\delta}$ ball around the origin $x=0$ for $0\leq\epsilon\leq\delta$,
and with the notation
\[
u^{\mathbf{k}}=u_{1}^{k_{1}}\cdot\ldots\cdot u_{d}^{k_{d}}.
\]
Note that this expansion is only valid up to the order of smoothness
of the SSM. For general members of the $W_{\epsilon}\left(E,t\right)$,
this smoothness can only guaranteed to be class $C^{\rho}$, but exceptional
members may admit higher-order expansions, just as primary SSMs do
in the case of autonomous, time-periodic and time-quasiperiodic SSMs
(see \citet{haller16}). 

Also note that for the smooth persistence of $W_{0}(E)$ as $W_{\epsilon}\left(E,t\right)$,
a possible $1\colon1$ resonance between the an eigenvalue inside
$E$ and another one outside $E$ has to be excluded in order to secure
the normal hyperbolicity of $W_{0}(E)$. This is reflected by the
lowering of the lower index in the summation in eq. \eqref{eq:strenghtened nonresonance under epsilon perturbation}
from $2$ to $1$ relative to what one requires for the existence
of $W_{0}\left(E\right)$. 

Differentiating the definition (\ref{eq:h double expansion}) of the
invariant manifold $W_{\epsilon}\left(E,t\right)$ in time and using
the system of ODEs (\ref{eq:(u,v) system}), we obtain
\begin{align}
\dot{v} & =D_{u}h_{\epsilon}(u,t)\dot{u}+D_{t}h_{\epsilon}(u,t)\nonumber \\
 & =D_{u}h_{\epsilon}(u,t)\left[A^{u}u+\hat{f}^{u}(u,h_{\epsilon}(u,t),t;\epsilon)\right]+D_{t}h_{\epsilon}(u,t)\label{eq:vdoteq1}
\end{align}
 At the same time, substitution into (\ref{eq:(u,v) system}) gives
\begin{align}
\dot{v} & =A^{v}h_{\epsilon}(u,t)+\hat{f}^{v}(u,h_{\epsilon}(u,t),t;\epsilon).\label{eq:vdoteq2}
\end{align}
Comparing (\ref{eq:vdoteq1}) and (\ref{eq:vdoteq2}) give the invariance
PDE satisfied by $W_{\epsilon}\left(E,t\right)$:
\begin{equation}
D_{u}h_{\epsilon}\left[A^{u}u+\hat{f_{0}^{u}}+\epsilon\hat{f_{1}^{u}}\right]+D_{t}h_{\epsilon}=A^{v}h_{\epsilon}+\hat{f_{0}^{v}}+\epsilon\hat{f_{1}^{v}}.\label{eq:invariance PDE}
\end{equation}

As the origin $(u,v)=(0,0)$ is a fixed point of system (\ref{eq:(u,v) system})
for all $t$ and $\epsilon$, we must have $h(0,t;\epsilon)\equiv0$
for all $\epsilon\geq0$ small and all $t\in\mathbb{R}$. This, in
turn, implies, 
\begin{equation}
h^{\mathbf{0}p}(t)\equiv0,\quad t\in\mathbb{R},\quad p\in\mathbb{N}.\label{eq:origin is invariant}
\end{equation}
 Furthermore, for $\epsilon=0$, the non-autonomous SSM $W_{\epsilon}\left(E,t\right)$
becomes the autonomous SSM $W(E)$, and hence we must have 
\begin{equation}
h^{\mathbf{k}0}(t)\equiv h^{\mathbf{k}},\quad t\in\mathbb{R},\quad\mathbf{k}\in\mathbb{N}^{d},\qquad h^{\mathbf{k}0}(t)\equiv h^{\mathbf{k}}=0,\quad\left|\mathbf{k}\right|=1,\label{eq:epsilon=00003D0 limit of h}
\end{equation}
 with the time-independent Taylor series coefficients $h^{\mathbf{k}}$
in the expansion for $W_{0}(E)$ in eq. (\ref{eq:W_0(E)}). 

\subsubsection{Structure of the invariance equation}

Substitution of (\ref{eq:h double expansion}) into the invariance
equation (\ref{eq:invariance PDE}) gives
\[
\sum_{\left|\mathbf{k}\right|,p\geq0}h^{\mathbf{k}p}(t)\epsilon^{p}D_{u}u^{\mathbf{k}}\left[A^{u}u+\hat{f}^{u}\left(u,\sum_{\left|\mathbf{k}\right|,p\geq0}h^{\mathbf{k}p}(t)u^{\mathbf{k}}\epsilon^{p},\epsilon,t\right)\right]+\sum_{\left|\mathbf{k}\right|,p\geq0}\dot{h}^{\mathbf{k}p}(t)u^{\mathbf{k}}\epsilon^{p}
\]
 
\begin{equation}
=\sum_{\left|\mathbf{k}\right|\geq0,p\geq0}A^{v}h^{\mathbf{k}p}(t)u^{\mathbf{k}}\epsilon^{p}+\hat{f}^{v}\left(u,\sum_{\left|\mathbf{k}\right|,p\geq0}h^{\mathbf{k}p}(t)u^{\mathbf{k}}\epsilon^{p},\epsilon,t\right).\label{eq:nonautonomous invariance eq.-1}
\end{equation}
We observe that 
\begin{align}
A^{v}h^{\mathbf{k}p}(t)u^{\mathbf{k}}-h^{\mathbf{k}p}(t)D_{u}u^{\mathbf{k}}A^{u}u & =\left(\begin{array}{ccc}
\lambda_{d+1} & 0 & 0\\
0 & \ddots & 0\\
0 & 0 & \lambda_{n}
\end{array}\right)\left(\begin{array}{c}
h_{1}^{\mathbf{k}p}\\
\vdots\\
h_{n-d}^{\mathbf{k}p}
\end{array}\right)u^{\mathbf{k}}\nonumber \\
 & -\left(\begin{array}{c}
h_{1}^{\mathbf{k}p}\\
\vdots\\
h_{n-d}^{\mathbf{k}p}
\end{array}\right)\left(\begin{array}{ccc}
\frac{k_{1}u^{\mathbf{k}}}{u_{1}} & \cdots & \frac{k_{d}u^{\mathbf{k}}}{u_{d}}\end{array}\right)\left(\begin{array}{c}
\lambda_{1}u_{1}\\
\vdots\\
\lambda_{d}u_{d}
\end{array}\right)\nonumber \\
 & =\left(\begin{array}{c}
\lambda_{d+1}h_{1}^{\mathbf{k}p}\\
\vdots\\
\lambda_{n}h_{n-d}^{\mathbf{k}p}
\end{array}\right)u^{\mathbf{k}}-\left(\begin{array}{ccc}
h_{1}^{\mathbf{k}p}\frac{k_{1}u^{\mathbf{k}}}{u_{1}} & \cdots & h_{1}^{\mathbf{k}p}\frac{k_{d}u^{\mathbf{k}}}{u_{d}}\\
\vdots & \ddots & \vdots\\
h_{n-d}^{\mathbf{k}p}\frac{k_{1}u^{\mathbf{k}}}{u_{1}} & \cdots & h_{n-d}^{\mathbf{k}p}\frac{k_{d}u^{\mathbf{k}}}{u_{d}}
\end{array}\right)\left(\begin{array}{c}
\lambda_{1}u_{1}\\
\vdots\\
\lambda_{d}u_{d}
\end{array}\right)\nonumber \\
 & =\left(\begin{array}{c}
\lambda_{d+1}h_{1}^{\mathbf{k}p}\\
\vdots\\
\lambda_{n}h_{n-d}^{\mathbf{k}p}
\end{array}\right)u^{\mathbf{k}}-\left(\begin{array}{c}
h_{1}^{\mathbf{k}p}\sum_{j=1}^{d}k_{j}\lambda_{j}\\
\vdots\\
h_{n-d}^{\mathbf{k}p}\sum_{j=1}^{d}k_{j}\lambda_{j}
\end{array}\right)u^{\mathbf{k}}\nonumber \\
 & =\left(\begin{array}{ccc}
\lambda_{d+1}-\sum_{j=1}^{d}k_{j}\lambda_{j} & \cdots & 0\\
\vdots & \ddots & \vdots\\
0 & \cdots & \lambda_{n}-\sum_{j=1}^{d}k_{j}\lambda_{j}
\end{array}\right)h^{\mathbf{k}p}(t)u^{\mathbf{k}}.\label{eq:leading order terms in invariance equation-1}
\end{align}
Therefore, the invariance equation (\ref{eq:nonautonomous invariance eq.-1})
can be rewritten as
\begin{equation}
\sum_{\left|\mathbf{k}\right|,p\geq0}\dot{h}^{\mathbf{k}p}(t)u^{\mathbf{k}}\epsilon^{p}=\sum_{\left|\mathbf{k}\right|,p\geq0}A_{\mathbf{k}}h^{\mathbf{k},p}(t)u^{\mathbf{k}}\epsilon^{p}+\sum_{\left|\mathbf{k}\right|,p\geq0}M^{\mathbf{k}p}(t,h^{\mathbf{j}m})u^{\mathbf{k}}\epsilon^{p},\label{eq:simplified invariance eq.-1}
\end{equation}
where
\begin{align}
A_{\mathbf{k}} & =\mathrm{diag}\left[\lambda_{\ell}-\sum_{j=1}^{d}k_{j}\lambda_{j}\right]_{\ell=d+1}^{n}\in\mathbb{C}^{\left(n-d\right)\times\left(n-d\right)},\nonumber \\
\sum_{\left|\mathbf{k}\right|,p\geq0}M^{\mathbf{k}p}(t,h^{\mathbf{j}m})u^{\mathbf{k}}\epsilon^{p} & =M(u,h^{\mathbf{j}m},\epsilon;t)=\hat{f}^{v}\left(u,\sum_{\left|\mathbf{k}\right|,p\geq0}h^{\mathbf{k}p}(t)u^{\mathbf{k}}\epsilon^{p},\epsilon;t\right)\label{eq:M^k-1}
\end{align}
\[
-\sum_{\left|\mathbf{k}\right|,p\geq0}\epsilon^{p}\left(\begin{array}{ccc}
h_{1}^{\mathbf{k}p}(t)\frac{k_{1}u^{\mathbf{k}}}{u_{1}} & \cdots & h_{1}^{\mathbf{k}p}(t)\frac{k_{d}u^{\mathbf{k}}}{u_{d}}\\
\vdots & \ddots & \vdots\\
h_{n-d}^{\mathbf{k}p}(t)\frac{k_{1}u^{\mathbf{k}}}{u_{1}} & \cdots & h_{n-d}^{\mathbf{k}p}(t)\frac{k_{d}u^{\mathbf{k}}}{u_{d}}
\end{array}\right)\hat{f}^{u}\left(u,\sum_{\left|\mathbf{k}\right|,p\geq0}h^{\mathbf{k}p}(t)u^{\mathbf{k}}\epsilon^{p},\epsilon;t\right).
\]

Recall from eq. (\ref{eq:vanishing derivatives}) that $\hat{f}^{v}$
and $\hat{f}^{u}$ vanish for $u,v,\epsilon=0$ and have no $\mathcal{O}\left(\left|v\right|\right)$
terms. Therefore, when $\sum_{\left|\mathbf{k}\right|,p\geq0}h^{\mathbf{k}p}(t)u^{\mathbf{k}}\epsilon^{p}$
is substituted into the terms in the Taylor expansion of $\hat{f}^{v}\left(u,v,\epsilon;t\right)$,
then it is multiplied in each case by at least the first power of
$u$ or at least the first power of $\epsilon.$ (When $\sum_{\left|\mathbf{k}\right|,p\geq0}h^{\mathbf{k}p}(t)u^{\mathbf{k}}\epsilon^{p}$
is substituted into terms of $\mathcal{O}\left(\left|v\right|^{2}\right)$
then it is multiplied by itself, but $h^{\mathbf{0}0}(t)\equiv0$,
and hence the lowest order term multiplying $\sum_{\left|\mathbf{k}\right|,p\geq0}h^{\mathbf{k}p}(t)u^{\mathbf{k}}\epsilon^{p}$
will be again at least the first power of $u$ or at least the first
power of $\epsilon.$) As a result, $M^{\mathbf{k}p}(h^{\mathbf{j}m},t)$
will only depend on $h^{\mathbf{j}m}$ that are lower in order, i.e.,
\[
D_{h^{\mathbf{j}m}}M^{\mathbf{k}p}(h^{\mathbf{j}m},t)=0,\quad\left|(\mathbf{j},m)\right|\geq\left|(\mathbf{k},p)\right|.
\]

Equating coefficients of equal powers of $u$ in eq. (\ref{eq:simplified invariance eq.-1}),
we therefore obtain the recursively solvable linear system of inhomogeneous
linear ODEs
\begin{equation}
\dot{h}^{\mathbf{k}p}(t)=A_{\mathbf{k}}h^{\mathbf{k}p}(t)+M^{\mathbf{k}p}(t,h^{\mathbf{j}m}),\quad\left|(\mathbf{j},m)\right|<\left|(\mathbf{k},p)\right|,\label{eq:system of inhomogeneous ODEs-1}
\end{equation}
with $M^{\mathbf{k}p}(t,h^{\mathbf{j}m})$ defined in formula (\ref{eq:M^k-1-2-1}).
So far, we know from eqs. (\ref{eq:origin is invariant}) and (\ref{eq:epsilon=00003D0 limit of h})
that
\begin{equation}
h^{\mathbf{0}p}(t)\equiv0,\quad t\in\mathbb{R},\quad p\in\mathbb{N},\qquad h^{\mathbf{k}0}(t)\equiv h^{\mathbf{k}},\quad t\in\mathbb{R},\quad\mathbf{k}\in\mathbb{N}^{d},\label{eq:elementary identities about indices}
\end{equation}
which is a homogeneous system of ODEs in (\ref{eq:system of inhomogeneous ODEs-1})
for $\left|\mathbf{k}\right|>0$. 

\subsubsection{Solution of the invariance equation}

A first observation about solving the family (\ref{eq:system of inhomogeneous ODEs-1})
of linear ODEs is that for arbitrary $\left|\mathbf{k}\right|$ and
$p=0$, we obtain from eqs. (\ref{eq:epsilon=00003D0 limit of h})
and (\ref{eq:system of inhomogeneous ODEs-1}) the algebraic equations
\begin{align*}
0 & =A_{\mathbf{k}}h^{\mathbf{k}0}+M^{\mathbf{k}0}(h^{\mathbf{j}0}).
\end{align*}
We know from classic SSM theory that this algebraic system of equations
is a recursively solvable linear system of equations, because $\left|\mathbf{j}\right|<\left|\mathbf{k}\right|$
holds for all $M^{\mathbf{k}0}(h^{\mathbf{j}0})$, and hence 
\begin{equation}
h^{\mathbf{k}0}=-A_{\mathbf{k}}^{-1}M^{\mathbf{k}0}(h^{\mathbf{j}0}),\quad\left|\mathbf{j}\right|<\left|\mathbf{k}\right|.\label{eq:coeffs in autonomus limit}
\end{equation}

Next, we note that the nonresonance assumption (\ref{eq:full nonresonance conditions})
implies that the coefficient matrix $A_{\mathbf{k}}$ of the homogeneous
part of eq. (\ref{eq:system of inhomogeneous ODEs-1}) is nonsingular.
We additionally require now this homogenous part to admit a hyperbolic
fixed point, which leads to the stronger nonresonance condition (\ref{eq:strict nonresonance})
of Theorem \ref{thm:non-automous SSM}. This is the same non-resonance
condition that arises in the work of \citet{haro06} for quasiperiodic
forcing, which is a subset of the general forcing class we are considering
here.

Under the nonresonance condition (\ref{eq:strict nonresonance}),
all diagonal elements of $A_{\mathbf{k}}$ have nonzero real parts.
We can therefore uniquely split $A_{\mathbf{k}}$ into the sum of
a diagonal matrix $A_{\mathbf{k}}^{-}$ which contains the stable
eigenvalues of $A_{\mathbf{k}}$ as well as zeros, and a diagonal
matrix $A_{\mathbf{k}}^{+}$ which contains the unstable eigenvalues
of $A_{\mathbf{k}}$ as well as zeros:
\begin{align*}
A_{\mathbf{k}} & =A_{\mathbf{k}}^{-}+A_{\mathbf{k}}^{+},\\
A_{\mathbf{k}}^{-}(j,j) & =\left\{ \begin{array}{ccc}
A_{\mathbf{k}}(j,j), &  & \mathrm{Re}A_{\mathbf{k}}(j,j)<0,\\
\\
0, &  & \mathrm{otherwise},
\end{array}\right.\quad A_{\mathbf{k}}^{+}(j,j)=\left\{ \begin{array}{ccc}
A_{\mathbf{k}}(j,j), &  & \mathrm{Re}A_{\mathbf{k}}(j,j)>0,\\
\\
0, &  & \mathrm{otherwise}.
\end{array}\right.
\end{align*}
 We define the corresponding splitting for the vectors $h^{\mathbf{k}p}(t)$
and $M^{\mathbf{k}p}(t,h^{\mathbf{j}m}(t))$ as 
\begin{align*}
h^{\mathbf{k}p} & =h^{\mathbf{k}p-}+h^{\mathbf{k}p+},\quad\\
h_{j}^{\mathbf{k}p-} & =\left\{ \begin{array}{ccc}
h_{j}^{\mathbf{k}p}, &  & \mathrm{Re}A_{\mathbf{k}}(j,j)<0,\\
\\
0, &  & \mathrm{otherwise},
\end{array}\right.\quad h_{j}^{\mathbf{k}p+}=\left\{ \begin{array}{ccc}
h_{j}^{\mathbf{k}p}, &  & \mathrm{Re}A_{\mathbf{k}}(j,j)>0,\\
\\
0, &  & \mathrm{otherwise},
\end{array}\right.\\
\end{align*}
\begin{align*}
M^{\mathbf{k}p} & =M^{\mathbf{k}p-}+M^{\mathbf{k}p+},\\
M_{j}^{\mathbf{k}p-} & =\left\{ \begin{array}{ccc}
M_{j}^{\mathbf{k}p}, &  & \mathrm{Re}A_{\mathbf{k}}(j,j)<0,\\
\\
0, &  & \mathrm{otherwise},
\end{array}\right.\quad M_{j}^{\mathbf{k}p+}=\left\{ \begin{array}{ccc}
M_{j}^{\mathbf{k}p}, &  & \mathrm{Re}A_{\mathbf{k}}(j,j)>0,\\
\\
0, &  & \mathrm{otherwise}.
\end{array}\right..\\
\end{align*}
We then then split the expression for the solution of system (\ref{eq:system of inhomogeneous ODEs-1})
as
\begin{align}
h^{\mathbf{k}p-}(t) & =e^{A_{\mathbf{k}}^{-}\left(t-t_{0}\right)}h^{\mathbf{k}p-}(t_{0})+\int_{t_{0}}^{t}e^{A_{\mathbf{k}}^{-}\left(t-s\right)}M^{\mathbf{k}p-}\left(s,h^{\mathbf{j}m}(s)\right)\,ds,\nonumber \\
h^{\mathbf{k}p+}(t) & =e^{A_{\mathbf{k}}^{+}\left(t-t_{0}\right)}h^{\mathbf{k}p+}(t_{0})+\int_{t_{0}}^{t}e^{A_{\mathbf{k}}^{+}\left(t-s\right)}M^{\mathbf{k}p+}\left(s,h^{\mathbf{j}m}(s)\right)\,ds.\label{eq:hkpm}
\end{align}
If $h^{\mathbf{k}p}(t)$ is a uniformly bounded solution of system
(\ref{eq:system of inhomogeneous ODEs-1}) for all forward and backward
times, then using the signs of the nonzero diagonal entries of the
matrices $A_{\mathbf{k}}^{\pm}$, we obtain from the formulas (\ref{eq:hkpm})
that
\begin{align*}
h^{\mathbf{k}p-}(t) & =\lim_{t_{0}\to-\infty}\left[e^{A_{\mathbf{k}}^{-}\left(t-t_{0}\right)}h^{\mathbf{k}p-}(t_{0})+\int_{t_{0}}^{t}e^{A_{\mathbf{k}}^{-}\left(t-s\right)}M^{\mathbf{k}p-}\left(s,h^{\mathbf{j}m}(s)\right)\,ds\right]\\
 & =\int_{-\infty}^{t}e^{A_{\mathbf{k}}^{-}\left(t-s\right)}M^{\mathbf{k}p-}(s,h^{\mathbf{j}m}(s))\,ds,\\
h^{\mathbf{k}p-}(t) & =\lim_{t_{0}\to+\infty}\left[e^{A_{\mathbf{k}}^{+}\left(t-t_{0}\right)}h^{\mathbf{k}+}(t_{0})+\int_{t_{0}}^{t}e^{A_{\mathbf{k}}^{+}\left(t-s\right)}M^{\mathbf{k}p+}\left(s,h^{\mathbf{j}m}(s)\right)\,ds\right]\\
 & =-\int_{t}^{+\infty}e^{A_{\mathbf{k}}^{+}\left(t-s\right)}e^{A_{\mathbf{k}}^{+}\left(t-s\right)}M^{\mathbf{k}p+}\left(s,h^{\mathbf{j}m}(s)\right)\,ds,
\end{align*}
which gives 
\begin{align*}
h^{\mathbf{k}p}(t) & =h^{\mathbf{k}p-}(t)+h^{\mathbf{k}p-}(t)=\int_{-\infty}^{t}e^{A_{\mathbf{k}}^{-}\left(t-s\right)}M^{\mathbf{k}p-}(s,h^{\mathbf{j}m}(s))\,ds\\
 & -\int_{t}^{+\infty}e^{A_{\mathbf{k}}^{+}\left(t-s\right)}e^{A_{\mathbf{k}}^{+}\left(t-s\right)}M^{\mathbf{k}p+}\left(s,h^{\mathbf{j}m}(s)\right)\,ds.
\end{align*}
Therefore, introducing the Green's function as the diagonal matrix
$G_{\mathbf{k}}(t)\in\mathbb{C}^{\left(n-d\right)\times\left(n-d\right)}$
defined in formula (\ref{eq:G_k definition}), we can write the unique
globally bounded solution of eq. (\ref{eq:system of inhomogeneous ODEs-1})
in the form
\begin{equation}
h^{\mathbf{k}p}(t)=\int_{-\infty}^{\infty}G_{\mathbf{k}}(t-s)M^{\mathbf{k}p}(s,h^{\mathbf{j}m}(s))\,ds,\quad\left|(\mathbf{j},m)\right|<\left|(\mathbf{k},p)\right|,\quad t\in\mathbb{R},\label{eq:h_k formula}
\end{equation}
with $M^{\mathbf{k}p}(t,h^{\mathbf{j}m}(t))$ defined in formula (\ref{eq:M^k-1-2-1}).
As we have already seen in eqs. \ref{eq:elementary identities about indices})
and (\ref{eq:coeffs in autonomus limit}), we specifically have
\begin{equation}
h^{\mathbf{0}p}(t)\equiv0,\quad t\in\mathbb{R},\quad p\in\mathbb{N},\qquad h^{\mathbf{k}0}(t)\equiv-A_{\mathbf{k}}^{-1}M^{\mathbf{k}0}(h^{\mathbf{j}0}),\quad\left|\mathbf{j}\right|<\left|\mathbf{k}\right|,\quad h^{\mathbf{0}}=0.\label{eq:oelementary identities about indices-1}
\end{equation}

\subsubsection{Reduced dynamics}

We obtain the reduced dynamics on $W_{\epsilon}\left(E,t\right)$
by restricting the $u$-component of system (\ref{eq:(u,v) system})
to $W_{\epsilon}\left(E,t\right)$: 
\begin{equation}
\dot{u}=A^{u}u+\hat{f}^{u}\left(u,h_{\epsilon}\left(u,t\right)\right).\label{eq:final reduced dynamics}
\end{equation}
 We obtain the form (\ref{eq:final reduced dynamics-2}) of the reduced
dynamics by substituting the definition of $\hat{f}^{u}$ into eq.
(\ref{eq:final reduced dynamics}), using the defining relationship
\[
\dot{x}_{\epsilon}^{*}(t)=Ax_{\epsilon}^{*}(t)+f_{0}\left(x_{\epsilon}^{*}(t)\right)+\epsilon\tilde{f}_{1}\left(x_{\epsilon}^{*}(t),t\right)
\]
of the anchor trajectory $x_{\epsilon}^{*}(t)$, and noting that the
rows of $P^{-1}$ are the appropriately normalized left eigenvectors
of $A$, and hence $Q_{u}\in\mathbb{C}^{d\times n}$ in formula (\ref{eq:final reduced dynamics-2})
contains the first $d$ rows of $P^{-1}$. The end result is then:\emph{
\begin{align*}
\dot{u} & =A^{u}u+\hat{f}^{u}\left(u,h_{\epsilon}(u,t),\epsilon;t\right)\\
 & =A^{u}u+Q_{u}\left[f_{0}\left(x_{\epsilon}^{*}(t)+P\left(\begin{array}{c}
u\\
h_{\epsilon}(u,t)
\end{array}\right)\right)+\epsilon\tilde{f}_{1}\left(x_{\epsilon}^{*}(t)+P\left(\begin{array}{c}
u\\
h_{\epsilon}(u,t)
\end{array}\right),t\right)+Ax_{\epsilon}^{*}(t)-\dot{x}_{\epsilon}^{*}(t)\right]\\
 & =A^{u}u\\
 & +Q_{u}\left[f_{0}\left(x_{\epsilon}^{*}(t)+P\left(\begin{array}{c}
u\\
h_{\epsilon}(u,t)
\end{array}\right)\right)-f_{0}\left(x_{\epsilon}^{*}(t)\right)+\epsilon\tilde{f}_{1}\left(x_{\epsilon}^{*}(t)+P\left(\begin{array}{c}
u\\
h_{\epsilon}(u,t)
\end{array}\right),t\right)-\epsilon\tilde{f}_{1}\left(x_{\epsilon}^{*}(t),t\right)\right],
\end{align*}
}as claimed in statement (ii) of Theorem \ref{thm:computation of non-automous SSM}. 

The $\left(u,v\right)$ coordinates are measured from the perturbed
anchor trajectory $x^{*}(t)$. We can also express the reduced dynamics
on $W_{\epsilon}\left(E,t\right)$ in coordinates that are aligned
with the subspace $E$ and emanate from the original $x=0$ fixed
point of system (\ref{eq: nonlinear system}) for $\epsilon=0.$ Let
\[
\left(\begin{array}{c}
\xi\\
\eta
\end{array}\right)=P^{-1}x,
\]
which gives
\begin{equation}
\left(\begin{array}{c}
u\\
v
\end{array}\right)=P^{-1}\left(x-x_{\epsilon}^{*}(t)\right)=\left(\begin{array}{c}
\xi\\
\eta
\end{array}\right)-P^{-1}x_{\epsilon}^{*}(t),\label{eq:coordinate change to (u,v)-1}
\end{equation}
which implies
\[
\xi=u+Q_{u}x_{\epsilon}^{*}(t),\quad\eta=v+Q_{v}x_{\epsilon}^{*}(t)\qquad P^{-1}=\left[\begin{array}{c}
Q_{u}\\
Q_{v}
\end{array}\right],\quad Q_{u}\in\mathbb{C}^{d\times n},\quad Q_{u}\in\mathbb{C}^{(n-d)\times n}.
\]
 Note that 
\[
P^{-1}AP=\left[\begin{array}{cc}
A^{u} & 0_{d\times n}\\
0_{\left(n-d\right)\times n} & A^{v}
\end{array}\right],
\]
which implies
\[
\left[\begin{array}{c}
Q_{u}A\\
Q_{v}A
\end{array}\right]=P^{-1}A=\left[\begin{array}{cc}
A^{u} & 0_{d\times n}\\
0_{\left(n-d\right)\times n} & A^{v}
\end{array}\right]P^{-1}=\left[\begin{array}{cc}
A^{u} & 0_{d\times n}\\
0_{\left(n-d\right)\times n} & A^{v}
\end{array}\right]\left[\begin{array}{c}
Q_{u}\\
Q_{v}
\end{array}\right]=\left[\begin{array}{c}
A^{u}Q_{u}\\
A^{v}Q_{v}
\end{array}\right],
\]
yielding the identity
\begin{equation}
Q_{u}A=A^{u}Q_{u}.\label{eq:linear identity}
\end{equation}
Given that \emph{
\begin{align}
\dot{u} & =A^{u}u+Q_{u}\left[f_{0}\left(x_{\epsilon}^{*}(t)+P\left(\begin{array}{c}
u\\
h_{\epsilon}(u,t)
\end{array}\right)\right)+\epsilon\tilde{f}_{1}\left(x_{\epsilon}^{*}(t)+P\left(\begin{array}{c}
u\\
h_{\epsilon}(u,t)
\end{array}\right),t\right)+Ax_{\epsilon}^{*}(t)-\dot{x}_{\epsilon}^{*}(t)\right],\label{eq:final reduced dynamics-2-2}
\end{align}
}we obtain 
\begin{align*}
\dot{\xi} & =\dot{u}+Q_{u}\dot{x}_{\epsilon}^{*}(t)\\
 & =A^{u}u+Q_{u}\left[f_{0}\left(x_{\epsilon}^{*}(t)+P\left(\begin{array}{c}
u\\
h_{\epsilon}(u,t)
\end{array}\right)\right)+\epsilon\tilde{f}_{1}\left(x_{\epsilon}^{*}(t)+P\left(\begin{array}{c}
u\\
h_{\epsilon}(u,t)
\end{array}\right),t\right)+Ax_{\epsilon}^{*}(t)\right]\\
 & =A^{u}\left(\xi-Q_{u}x_{\epsilon}^{*}(t)\right)+Q_{u}Ax_{\epsilon}^{*}(t)\\
 & +Q_{u}\left[f_{0}\left(x_{\epsilon}^{*}(t)+P\left(\begin{array}{c}
u\\
h_{\epsilon}(u,t)
\end{array}\right)\right)+\epsilon\tilde{f}_{1}\left(x_{\epsilon}^{*}(t)+P\left(\begin{array}{c}
u\\
h_{\epsilon}(u,t)
\end{array}\right),t\right)\right]\\
 & =A^{u}\xi+\left[Q_{u}A-A^{u}Q_{u}\right]x_{\epsilon}^{*}(t)\\
 & +Q_{u}\left[f_{0}\left(x_{\epsilon}^{*}(t)+P\left(\begin{array}{c}
\xi-Q_{u}x_{\epsilon}^{*}(t)\\
h_{\epsilon}(\xi-Q_{u}x_{\epsilon}^{*}(t),t)
\end{array}\right)\right)+\epsilon\tilde{f}_{1}\left(x_{\epsilon}^{*}(t)+P\left(\begin{array}{c}
\xi-Q_{u}x_{\epsilon}^{*}(t)\\
h_{\epsilon}(\xi-Q_{u}x_{\epsilon}^{*}(t),t)
\end{array}\right),t\right)\right]\\
 & =A^{u}\xi+Q_{u}f_{0}\left(x_{\epsilon}^{*}(t)+P\left(\begin{array}{c}
\xi-Q_{u}x_{\epsilon}^{*}(t)\\
h_{\epsilon}(\xi-Q_{u}x_{\epsilon}^{*}(t),t)
\end{array}\right)\right)\\
 & +Q_{u}\epsilon\tilde{f}_{1}\left(x_{\epsilon}^{*}(t)+P\left(\begin{array}{c}
\xi-Q_{u}x_{\epsilon}^{*}(t)\\
h_{\epsilon}(\xi-Q_{u}x_{\epsilon}^{*}(t),t)
\end{array}\right),t\right)
\end{align*}
where we have used the identity (\ref{eq:linear identity}). This
completes the proof of statement (iii) of Theorem \ref{thm:computation of non-automous SSM}.

\section{Proof of Theorems \ref{thm:adiabatic anchor}-\ref{thm:computation of adiabatic SSM}}

\setcounter{equation}{0}

\subsection{Proof of Theorem \ref{thm:adiabatic anchor}: Expansion for the hyperbolic
slow manifold $\mathcal{L}_{\epsilon}$ \label{subsec:Proof of approximation of slow manifold}}

We now assume that $f$ and $x_{0}^{\prime}(\alpha)$ have $r\geq1$
uniformly bounded derivatives over a closed neighborhood of $x_{0}(\alpha)$
for all $\alpha$. Then, by the results of \citet{eldering13} on
the persistence of non-compact, normally hyperbolic invariant manifolds,
a unique and uniformly bounded slow manifold $\mathcal{L}_{\epsilon}$
exists near $\mathcal{L}_{0}$ for $\epsilon$ small enough. These
results are applicable because the $\mathcal{O}\left(\epsilon\right)$
perturbation term in (\ref{eq:slow-fast system}) is uniformly bounded
in the $C^{1}$ norm. The slow manifold $\mathcal{L}_{\epsilon}$
is $C^{r}$-diffeomorphic to $\mathcal{L}_{0}$ and is as smooth in
the $\epsilon$ parameter as system (\ref{eq:slow-fast system}).
As a consequence, we have an asymptotic expansion 
\begin{equation}
\mathcal{L}_{\epsilon}=\left\{ (x,\alpha)\in\mathbb{R}^{n}\times\mathbb{R}\colon\,\,\,\,\,x=x_{\epsilon}(\alpha)=\sum_{j=0}^{r}\epsilon^{j}x_{j}(\alpha)+o\left(\epsilon^{r}\right)\right\} ,\label{eq:expansion for x(alpha)}
\end{equation}
where the functions $x_{j}(\alpha)$ are uniformly bounded in $\alpha$.
By the definition of $x_{\epsilon}(\alpha)$ and by eq. (\ref{eq:alpha derivatives vanish}),
we have 
\begin{equation}
D_{\epsilon}^{p}\left[f\left(x_{\epsilon}(\alpha),\alpha\right)-\epsilon x_{\epsilon}^{\prime}(\alpha)\right]\equiv0,\quad p\in\mathbb{N},\quad\alpha\in\mathbb{R}.\label{eq:epsilon derivatives-1}
\end{equation}

Substituting the expansion (\ref{eq:expansion for x(alpha)}) into
(\ref{eq:slow-fast system}), we obtain
\[
\sum_{j\geq0}\epsilon^{j+1}x_{j}^{\prime}(\alpha)=f\left(\sum_{j\geq0}\epsilon^{j}x_{j}(\alpha),\alpha\right).
\]
Comparison of equal powers of $\epsilon$ in the last equation gives
\begin{equation}
x_{\nu-1}^{\prime}(\alpha)=\left.\frac{1}{\nu!}\frac{\partial^{\nu}}{\partial\epsilon^{\nu}}f\left(\sum_{j\geq0}\epsilon^{j}x_{j}(\alpha),\alpha\right)\right|_{\epsilon=0},\quad\nu\geq1.\label{eq:general x-nu formula}
\end{equation}
 Specifically, we have 
\[
\epsilon x_{0}^{\prime}(\alpha)+\epsilon^{2}x_{1}^{\prime}(\alpha)+\epsilon^{3}x_{2}^{\prime}(\alpha)+\ldots=f\left(x_{0}(\alpha),\alpha\right)+\epsilon Dfx_{1}(\alpha)+\left.\frac{1}{2}\frac{\partial^{2}}{\partial\epsilon^{2}}f\left(\sum_{p\geq0}\epsilon^{p}x_{p}(\alpha),\alpha\right)\right|_{\epsilon=0}\epsilon^{2}+\ldots.
\]
\linebreak{}
Note that
\begin{align*}
\frac{\partial^{2}}{\partial\epsilon^{2}}f\left(\sum_{j\geq0}\epsilon^{j}x_{j}(\alpha),\alpha\right) & =\frac{\partial}{\partial\epsilon}\left[D_{x}f\left(\sum_{j\geq0}\epsilon^{j}x_{j}(\alpha),\alpha\right)\sum_{j\geq0}j\epsilon^{j-1}x_{j}(\alpha)\right]\\
 & =D_{x}^{2}f\left(\sum_{j\geq0}\epsilon^{j}x_{j}(\alpha),\alpha\right)\otimes\left(\sum_{j\geq1}j\epsilon^{j-1}x_{j}(\alpha)\right)\otimes\left(\sum_{j\geq1}j\epsilon^{j-1}x_{j}(\alpha)\right)\\
 & +D_{x}f\left(\sum_{j\geq0}\epsilon^{j}x_{j}(\alpha),\alpha\right)\sum_{j\geq2}j\left(j-1\right)\epsilon^{j-2}x_{j}(\alpha),
\end{align*}
 therefore, we have
\[
\left.\frac{1}{2}\frac{\partial^{2}}{\partial\epsilon^{2}}f\left(\sum_{j\geq0}\epsilon^{j}x_{j}(\alpha),\alpha\right)\right|_{\epsilon=0}=\frac{1}{2}D_{x}^{2}f\left(x_{0}(\alpha),\alpha\right)\otimes x_{1}(\alpha)\otimes x_{1}(\alpha)+D_{x}f\left(x_{0}(\alpha),\alpha\right)x_{2}(\alpha)
\]
which gives formulas (\ref{eq:adiabatic anchor up to second order})
as solutions up to second order. We note that $A^{-1}(\alpha)=\left[D_{x}f\left(x_{0}(\alpha),\alpha\right)\right]^{-1}$
is known to exist by the hyperbolicity assumption (\ref{eq:hyperbolicity assumption for slo-fast}). 

To obtain the coefficients in eq. (\ref{eq:expansion for x(alpha)})
up to an arbitrary order $\nu$, we need to solve the recursive set
of algebraic equations (\ref{eq:general x-nu formula}) for $x_{j}(\alpha)$.
To solve these equations, we again recall the multi-variate Faá di
Bruno formula (\ref{eq:general Faa di Bruno formula}), for which
we now have $p=1$, wherein we have $H=f_{q}$, $g^{i}=\sum_{p\geq0}\epsilon^{p}x_{p}^{i}(\alpha)$,
$\mathbf{x}=\epsilon\in\mathbb{R},$ $\mathbf{x}^{0}=0\in\mathbb{R}$,
$\mathbf{\boldsymbol{\nu}}=\nu\in\mathbb{N}$, $\boldsymbol{\ell}_{i}=\ell_{i}\in\mathbb{N}$,
and $\mathbf{y}=x\in\mathbb{R}^{n}$. Therefore, for
\[
H(\epsilon)=f_{q}\left(\sum_{j\geq0}\epsilon^{j}x_{j}^{1}(\alpha),\ldots,\sum_{j\geq0}\epsilon^{j}x_{j}^{n}(\alpha),\alpha\right),\quad q=1,\ldots,n,
\]
 formula (\ref{eq:general Faa di Bruno formula}) gives
\begin{align*}
\frac{d^{\nu}}{d\epsilon^{\nu}}H\left(0\right) & =\left.D_{\epsilon}^{\nu}f_{q}\left(\sum_{j\geq0}\epsilon^{j}x_{j}^{1}(\alpha),\ldots,\sum_{j\geq0}\epsilon^{j}x_{j}^{n}(\alpha),\alpha\right)\right|_{\epsilon=0}\\
 & =\sum_{1\leq\left|\boldsymbol{\gamma}\right|\leq\nu}D_{x}^{\mathbf{\boldsymbol{\gamma}}}f_{q}\left(x_{0}(\alpha),\alpha\right)\sum_{s=1}^{\nu}\sum_{p_{s}\left(\nu,\boldsymbol{\gamma}\right)}\nu!\prod_{j=1}^{s}\frac{\prod_{i=1}^{n}\left[\left(\ell_{j}\right)!x_{\ell_{j}}^{i}(\alpha)\right]^{k_{ji}}}{\left(\mathbf{k}_{j}\right)!\left[\left(\ell_{j}\right)!\right]^{\left|\mathbf{k}_{j}\right|}}\\
 & =\sum_{1\leq\left|\boldsymbol{\gamma}\right|\leq\nu}D_{x}^{\boldsymbol{\gamma}}f_{q}\left(x_{0}(\alpha),\alpha\right)\sum_{s=1}^{\nu}\sum_{p_{s}\left(\nu,\boldsymbol{\gamma}\right)}\nu!\prod_{j=1}^{s}\frac{\prod_{i=1}^{n}\left[x_{\ell_{j}}^{i}(\alpha)\right]^{k_{ji}}}{\prod_{i=1}^{n}k_{ji}!}\\
 & =\nu!\left[A(\alpha)x_{\nu}(\alpha)\right]_{q}\\
 & +\sum_{1<\left|\boldsymbol{\gamma}\right|\leq\nu}D_{x}^{\boldsymbol{\gamma}}f_{q}\left(x_{0}(\alpha),\alpha\right)\sum_{s=1}^{\nu}\sum_{p_{s}\left(\nu,\boldsymbol{\gamma}\right)}\nu!\prod_{j=1}^{s}\frac{\prod_{i=1}^{n}\left[x_{\ell_{j}}^{i}(\alpha)\right]^{k_{ji}}}{\prod_{i=1}^{n}k_{ji}!},\quad q=1,\ldots,n.
\end{align*}
Using this last expression in (\ref{eq:general x-nu formula}), we
obtain formula \eqref{eq:recursive formula for adiabatic anchor}.

\subsection{Existence of the adiabatic SSM $\mathcal{M}_{\epsilon}$\label{subsec:Existence-of-the-adiabatic-SSM}}

The classic persistence results of \citet{fenichel71} assume compactness
for the underlying manifold and hence do not guarantee the smooth
persistence of $\mathcal{M}_{0}$ for $\epsilon>0$. To conclude the
persistence of $\mathcal{M}_{0}$, we first employ the ``wormhole''
construct from Proposition B1 of \citet{eldering18} that extends
$\mathcal{M}_{0}$ smoothly over its boundary so that it becomes a
subset of a $\rho$-normally hyperbolic, normally attracting, class
$C^{r}$ invariant manifold $\mathcal{\tilde{M}}_{0}$ without boundary.
Under this extension, the stable foliation of $W^{s}\left(\mathcal{M}_{0}\right)$
coincides with that part of the stable foliation of $W^{s}\left(\mathcal{\tilde{M}}_{0}\right)$.

Due to the exclusion of a $1:1$ resonance (see eq. (\ref{eq:lowered lower boudn on summation in nonresonance for adiabatic case}))
between eigenvalues inside and outside $\mathcal{\tilde{M}}_{0}$,
, the non-compact, boundaryless extended manifold $\mathcal{\tilde{M}}_{0}$
is a normally attracting invariant manifold. Its persistence can then
be concluded for small enough $\epsilon>0$ from related results by
\citet{eldering13} as long as assumption (\ref{eq:rho-normal hyperbolicity-1})
holds for the same $\rho$, uniformly in $\alpha$. In addition, $\mathcal{M}_{0}$
and $f(x,\alpha)$ must have $r$ derivatives that are uniformly bounded
in $\alpha$ in a small neighborhood of $\mathcal{M}_{0}.$ For $\epsilon>0$
small enough, we then obtain a unique (for a given choice of the wormhole
construct) persistent invariant manifold $\mathcal{M}_{\epsilon}$
that is diffeomorphic to $\mathcal{M}_{0}$, has $r$ uniformly bounded
derivatives and is $\mathcal{O}\left(\varepsilon\right)$ $C^{1}$
-close to $\mathcal{M}_{0}$. The smoothness class of $\mathcal{M}_{\epsilon}$
is $C^{m}$ where $m=\min\left(r,\rho\right)$. 

We do not obtain uniqueness from any of these constructs as they all
involve modifications of the vector field. But the initial conditions
are anyway with probability zero on an SSM that is smoother than the
smoothness implied by the spectral gap. In other words, there is an
inherent non-uniqueness in the choice of $\mathcal{M}_{\epsilon}$
as a $\rho$-normally hyperbolic invariant manifold tangent to $E(\alpha)$
for each $\alpha$, as there are infinitely many different choices
for $W\left(E(\alpha)\right)$ to begin with.

Despite the non-uniqueness of $\mathcal{M}_{\epsilon}$, all persisting
manifolds $\mathcal{M}_{\epsilon}$ must contain the unique, persisting
continuation $\mathcal{L}_{\epsilon}$ of $\mathcal{L}_{0}$. The
reason is that by the results of \citet{eldering13},  $\mathcal{L}_{\epsilon}$
is unique, uniformly bounded and lies fully in a small, inflowing
neighborhood of $\mathcal{M}_{\epsilon}$ whose size is $\mathcal{O}(1)$
in $\epsilon$. Points on $\mathcal{L}_{\epsilon}$ would then be
mapped by the inverse flow map outside that inflowing neighborhood,
unless they are contained in $\mathcal{M}_{\epsilon}$. Therefore,
$\mathcal{L}_{\epsilon}\subset\mathcal{M}_{\epsilon}$ must hold. 

\subsection{Computation of the adiabatic SSM $\mathcal{M}_{\epsilon}$\label{subsec:Computation-of-the-adiabatic-SSM}}

Based on formula (\ref{eq:epsilon derivatives-1}), 
\begin{align}
\hat{f}(u,v,\epsilon;\alpha) & =P^{-1}\left(\alpha\right)\left[f\left(x_{\epsilon}(\alpha)+P\left(\alpha\right)\left(\begin{array}{c}
u\\
v
\end{array}\right),\alpha\right)-A\left(\alpha\right)P\left(\alpha\right)\left(\begin{array}{c}
u\\
v
\end{array}\right)-\epsilon x_{\epsilon}^{\prime}(\alpha)-\epsilon P^{\prime}\left(\alpha\right)\left(\begin{array}{c}
u\\
v
\end{array}\right)\right]\nonumber \\
 & =P^{-1}\left(\alpha\right)\left[f\left(x_{\epsilon}(\alpha),\alpha\right)+D_{x}f\left(x_{\epsilon}(\alpha),\alpha\right)P\left(\alpha\right)\left(\begin{array}{c}
u\\
v
\end{array}\right)\right.\nonumber \\
 & \,\,\,\,\,\,\,\,\,\,\,\,\,\,\,\,\,\,\,\,\,\,\,\,\,\,\,\,\,\,\,\,\left.-A\left(\alpha\right)P\left(\alpha\right)\left(\begin{array}{c}
u\\
v
\end{array}\right)-\epsilon x_{\epsilon}^{\prime}(\alpha)-\epsilon P^{\prime}\left(\alpha\right)\left(\begin{array}{c}
u\\
v
\end{array}\right)+\mathcal{O}\left(\left|u\right|^{2},\left|u\right|\left|v\right|,\left|v\right|^{2}\right)\right]\nonumber \\
 & =P^{-1}\left(\alpha\right)\left[\left[D_{x}f\left(x_{\epsilon}(\alpha),\alpha\right)-D_{x}f\left(x_{0}(\alpha),\alpha\right)\right]P\left(\alpha\right)\left(\begin{array}{c}
u\\
v
\end{array}\right)\right.\nonumber \\
 & \,\,\,\,\,\,\,\,\,\,\,\,\,\,\,\,\,\,\,\,\,\,\,\,\,\,\,\,\,\,\,\,\left.+\mathcal{O}\left(\left|u\right|^{2},\left|u\right|\left|v\right|,\left|v\right|^{2},\epsilon\left|u\right|,\epsilon\left|v\right|\right)\right]\\
 & =P^{-1}\left(\alpha\right)\left[\mathcal{O}\left(\left|u\right|^{2},\left|u\right|\left|v\right|,\left|v\right|^{2},\epsilon\left|u\right|,\epsilon\left|v\right|\right)\right].\label{eq:fhat definition-1-1}
\end{align}
Under the assumptions (\ref{eq:hyperbolicity assumption for slo-fast})
and (\ref{eq:adiabatic spectrum assumption}), and by the definition
of $\hat{f}$ in (\ref{eq:fhat definition-1}), we have
\begin{equation}
\hat{f}(0,0,0;\alpha)=0,\quad D_{u}\hat{f}(0,0,0;\alpha)=0,\quad D_{v}\hat{f}(0,0,0;\alpha)=0,\quad D_{\epsilon}\hat{f}(0,0,0;\alpha)=0.\label{eq:vanishing derivatives-2}
\end{equation}
Specifically, in the $(u,v,\alpha)$ coordinates, the perturbed slow
manifold $\mathcal{L}_{\epsilon}$ satisfies
\[
\mathcal{L}_{\epsilon}=\left\{ (u,v,\alpha)\in\mathbb{R}^{n}\times\mathbb{R}\colon\,\,\,\,\,u=0,\quad v=0\right\} .
\]

We note the similarity between formulas (\ref{eq:(u,v) system-2})-(\ref{eq:vanishing derivatives-2})
and the setting of eqs. (\ref{eq:(u,v) system})-(\ref{eq:vanishing derivatives})
for general non-autonomous SSMs. Based on this similarity, we will
follow the same strategy here that we employed to compute invariant
manifolds in system (\ref{eq:(u,v) system}). Specifically, we will
seek the perturbed invariant manifold $\mathcal{M}_{\epsilon}$ in
system (\ref{eq:(u,v) system-2}) in the form of the asymptotic expansion
(\ref{eq:h double expansion-1}).

On the one hand, differentiating the definition of the invariant manifold
$\mathcal{M}_{\epsilon}$ from eq. (\ref{eq:h double expansion-1})
with respect to $t$ and using the ODE (\ref{eq:(u,v) system-1}),
we obtain 
\begin{align}
\dot{v} & =D_{u}h_{\epsilon}(u,\alpha)\dot{u}+\epsilon D_{\alpha}h_{\epsilon}(u,\alpha)\nonumber \\
 & =D_{u}h_{\epsilon}(u,\alpha)\left[A^{u}(\alpha)u+\hat{f}^{u}(u,h_{\epsilon}(u,\alpha),\alpha;\epsilon)\right]+\epsilon D_{\alpha}h_{\epsilon}(u,\alpha).\label{eq:vdoteq1-1}
\end{align}
On the other hand, substitution of the definition of the invariant
manifold $\mathcal{M}_{\epsilon}$ into \eqref{eq:(u,v) system-2}
gives
\begin{align}
\dot{v} & =A^{v}h_{\epsilon}(u,\alpha)+\hat{f}^{v}(u,h_{\epsilon}(u,\alpha),\alpha;\epsilon).\label{eq:vdoteq2-1}
\end{align}
Comparing (\ref{eq:vdoteq1-1}) and (\ref{eq:vdoteq2-1}) we obtain
\begin{equation}
D_{u}h_{\epsilon}\left[A^{u}u+\hat{f^{u}}\right]+\epsilon D_{\alpha}h_{\epsilon}=A^{v}h_{\epsilon}+\hat{f^{v}}.\label{eq:invariance PDE-1}
\end{equation}

\subsubsection{Structure and solution of the invariance equation}

Substitution of (\ref{eq:h double expansion-1}) into (\ref{eq:invariance PDE-1})
gives
\[
\sum_{\left|\left(\mathbf{k},p\right)\right|\geq1}h^{\mathbf{k}p}(\alpha)\epsilon^{p}D_{u}u^{\mathbf{k}}\left[A^{u}(\alpha)u+\hat{f}^{u}\left(u,\sum_{\left|\left(\mathbf{k},p\right)\right|\geq1}h^{\mathbf{k}p}(\alpha)u^{\mathbf{k}}\epsilon^{p},\alpha;\epsilon\right)\right]+\sum_{\left|\left(\mathbf{k},p\right)\right|\geq1}\left[h^{\mathbf{k}p}\right]^{\prime}(\alpha)u^{\mathbf{k}}\epsilon^{p+1}
\]
 
\begin{equation}
=\sum_{\left|\left(\mathbf{k},p\right)\right|\geq1}A^{v}h^{\mathbf{k}p}(\alpha)u^{\mathbf{k}}\epsilon^{p}+\hat{f}^{v}\left(u,\sum_{\left|\left(\mathbf{k},p\right)\right|\geq1}h^{\mathbf{k}p}(\alpha)u^{\mathbf{k}}\epsilon^{p},\alpha;\epsilon\right).\label{eq:nonautonomous invariance eq.-1-1}
\end{equation}
As we did in the general non-autonomous case, we observe that 
\[
A^{v}(\alpha)h^{\mathbf{k}p}(\alpha)u^{\mathbf{k}}-h^{\mathbf{k}p}(\alpha)D_{u}u^{\mathbf{k}}A^{u}(\alpha)u
\]
\begin{equation}
=\left(\begin{array}{ccc}
\lambda_{d+1}(\alpha)-\sum_{j=1}^{d}k_{j}\lambda_{j}(\alpha) & \cdots & 0\\
\vdots & \ddots & \vdots\\
0 & \cdots & \lambda_{n}(\alpha)-\sum_{j=1}^{d}k_{j}\lambda_{j}(\alpha)
\end{array}\right)h^{\mathbf{k}p}(\alpha)u^{\mathbf{k}}.\label{eq:leading order terms in invariance equation-1-1}
\end{equation}
Therefore, the invariance equation (\ref{eq:nonautonomous invariance eq.-1-1})
can be rewritten as
\begin{equation}
\sum_{\left|\left(\mathbf{k},p\right)\right|\geq1}\left[h^{\mathbf{k}p}\right]^{\prime}(\alpha)u^{\mathbf{k}}\epsilon^{p+1}=\sum_{\left|\left(\mathbf{k},p\right)\right|\geq1}A_{\mathbf{k}}(\alpha)h^{\mathbf{k},p}(\alpha)u^{\mathbf{k}}\epsilon^{p}+\sum_{\left|\left(\mathbf{k},p\right)\right|\geq1}M^{\mathbf{k}p}(\alpha,h^{\mathbf{j}m})u^{\mathbf{k}}\epsilon^{p},\label{eq:simplified invariance eq.-1-1}
\end{equation}
where 
\begin{align}
A_{\mathbf{k}} & (\alpha)=\mathrm{diag}\left[\lambda_{\ell}(\alpha)-\sum_{j=1}^{d}k_{j}\lambda_{j}(\alpha)\right]_{\ell=d+1}^{n}\in\mathbb{C}^{\left(n-d\right)\times\left(n-d\right)},\nonumber \\
\sum_{\left|\left(\mathbf{k},p\right)\right|\geq1}M^{\mathbf{k}p}(\alpha,h^{\mathbf{j}m})u^{\mathbf{k}}\epsilon^{p} & =M(u,\alpha,h^{\mathbf{j}m},\epsilon)=\hat{f}^{v}\left(u,\sum_{\left|\left(\mathbf{k},p\right)\right|\geq1}h^{\mathbf{k}p}(\alpha)u^{\mathbf{k}}\epsilon^{p},\alpha;\epsilon\right)\label{eq:M^k-1-1}
\end{align}
\[
-\sum_{\left|\left(\mathbf{k},p\right)\right|\geq1}\epsilon^{p}\left(\begin{array}{ccc}
h_{1}^{\mathbf{k}p}(\alpha)\frac{k_{1}u^{\mathbf{k}}}{u_{1}} & \cdots & h_{1}^{\mathbf{k}p}(\alpha)\frac{k_{d}u^{\mathbf{k}}}{u_{d}}\\
\vdots & \ddots & \vdots\\
h_{n-d}^{\mathbf{k}p}(\alpha)\frac{k_{1}u^{\mathbf{k}}}{u_{1}} & \cdots & h_{n-d}^{\mathbf{k}p}(\alpha)\frac{k_{d}u^{\mathbf{k}}}{u_{d}}
\end{array}\right)\hat{f}^{u}\left(u,\sum_{\left|\left(\mathbf{k},p\right)\right|\geq1}h^{\mathbf{k}p}(\alpha)u^{\mathbf{k}}\epsilon^{p},\alpha;\epsilon\right).
\]

Recall from eq. (\ref{eq:vanishing derivatives-2}) that $\hat{f}^{v}$
and $\hat{f}^{u}$ vanish for $u,v,\epsilon=0$ and have no $\mathcal{O}\left(\left|v\right|\right)$
terms. Then, as we concluded in the general non-autonomous case, $M^{\mathbf{k}p}(\alpha,h^{\mathbf{j}m})$
will only depend on $h^{\mathbf{j}m}$ that are lower in order, i.e.,
\begin{equation}
D_{h^{\mathbf{j}m}}M^{\mathbf{k}p}(\alpha,h^{\mathbf{j}m})u^{\mathbf{k}}=0,\quad\left|(\mathbf{j},m)\right|\geq\left|(\mathbf{k},p)\right|.\label{eq:zero Djm}
\end{equation}

Equating coefficients of equal powers of $u$ in eq. (\ref{eq:simplified invariance eq.-1-1}),
we then obtain the system of equations
\begin{equation}
\left[h^{\mathbf{k}(p-1)}\right]^{\prime}(\alpha)=A_{\mathbf{k}}(\alpha)h^{\mathbf{k}p}(\alpha)+M^{\mathbf{k}p}(\alpha,h^{\mathbf{j}m}),\quad\left|(\mathbf{j},m)\right|<\left|(\mathbf{k},p)\right|.\label{eq:system of inhomogeneous ODEs-1-1}
\end{equation}
This is a recursively defined set of linear algebraic equations for
$h^{\mathbf{k}p}(\alpha)$, which can be uniquely solved as long as
$A_{\mathbf{k}}(\alpha)$ is nonsingular, i.e., the non-resonance
conditions (\ref{eq:strict nonresonance-1}) are satisfied. In that
case, the recursive solution of (\ref{eq:system of inhomogeneous ODEs-1-1})
starts from 
\begin{equation}
h^{\mathbf{0}p}(\alpha)\equiv0,\quad p\geq0,\qquad h^{\mathbf{k}0}(\alpha)\equiv h^{\mathbf{k}}(\alpha),\quad\mathbf{k}\in\mathbb{N}^{d},\qquad h^{\mathbf{k}0}(\alpha)\equiv h^{\mathbf{k}}(\alpha)=0,\quad\left|\mathbf{k}\right|=1,\label{eq:known k^=00007Bkp=00007D coeffs in adiabatic expansion-1}
\end{equation}
and takes the form (\ref{eq:h_=00007Bkp=00007D  adiabatic case}),
with the quantities in eq. (\ref{eq:M^k-1-2-1-1-1}) obtained from
formulas (\ref{eq:M^k-1-1}) using the relation (\ref{eq:zero Djm}).

\subsubsection{Reduced dynamics}

To obtain the form of the reduced dynamics on the adiabatic SSM $\mathcal{M}_{\epsilon}$,
we consider the $u$ component of the transformation formula (\ref{eq:u-v transformation,  adiabatic case}), 

\emph{
\begin{equation}
u=Q_{u}\left(\alpha\right)\left(x-x_{\epsilon}(\alpha)\right),\label{eq:u part of variable change for adiabatic case}
\end{equation}
}where the rows of $Q_{u}\left(\alpha\right)\in\mathbb{C}^{d\times n}$
are the appropriately scaled unit left eigenvectors of $A(\alpha)$
corresponding to its first $d$ (right) eigenvectors. This scaling
is specified in statement (ii) of the theorem, ensuring that the rows
of $Q_{u}\left(\alpha\right)$ coincide with the first $d$ rows of
$P^{-1}(\alpha)$. 

Differentiation of (\ref{eq:u part of variable change for adiabatic case})
with respect to time gives\emph{
\begin{align*}
\dot{u} & =\epsilon Q_{u}^{\prime}\left(\alpha\right)\left(x-x_{\epsilon}(\alpha)\right)+Q_{u}\left(\alpha\right)\left(\dot{x}-\epsilon x_{\epsilon}^{\prime}(\alpha)\right)\\
 & =\epsilon Q_{u}^{\prime}\left(\alpha\right)P(\alpha)\left(\begin{array}{c}
u\\
v
\end{array}\right)+Q_{u}\left(\alpha\right)\left(f(x,\alpha)-\epsilon x_{\epsilon}^{\prime}(\alpha)\right)\\
 & =Q_{u}\left(\alpha\right)\left(f\left(x_{\epsilon}(\alpha)+P(\alpha)\left(\begin{array}{c}
u\\
v
\end{array}\right),\alpha\right)-\epsilon x_{\epsilon}^{\prime}(\alpha)\right)+\epsilon Q_{u}^{\prime}\left(\alpha\right)P(\alpha)\left(\begin{array}{c}
u\\
v
\end{array}\right).
\end{align*}
}Restricting this last formula to the graph $v=h_{\epsilon}(u,\alpha)$
then proves formula (\ref{eq:adiabatic restriced dynamics}) in statement
(ii) of the theorem.

\bibliographystyle{plainnat}
\bibliography{SSM_bibliography}

\end{document}